\newcommand{\mZ}{\mathbb{Z}}
\newcommand{\mR}{\mathbb{R}}
\newcommand{\mQ}{\mathbb{Q}}
\newcommand{\mF}{\mathbb{F}}
\newcommand{\mC}{\mathbb{C}}
\newcommand{\cC}{\mathcal{C}}
\newcommand{\cX}{\mathcal{X}}
\newcommand{\cI}{\mathcal{I}}
\newcommand{\cY}{\mathcal{Y}}
\newcommand{\cZ}{\mathcal{Z}}
\newcommand{\cE}{\mathcal{E}}
\newcommand{\cD}{\mathcal{D}}
\newcommand{\cS}{\mathcal{S}}
\newcommand{\cO}{\mathcal{O}}
\newcommand{\cU}{\mathcal{U}}
\newcommand{\cB}{\mathcal{B}}
\newcommand{\cM}{\mathcal{M}}
\newcommand{\wt}[1]{\widetilde{#1}}
\newcommand{\cL}{\mathcal{L}}
\DeclareSymbolFont{cyrletters}{OT2}{wncyr}{m}{n}
\DeclareMathSymbol{\Sha}{\mathalpha}{cyrletters}{"58}
\DeclareMathSymbol{\Sha}{\mathalpha}{cyrletters}{"58}
\newcommand{\nrm}[1]{\left|\left |#1\right |\right |}
\newcommand{\brk}[1]{ \left\lbrace #1 \right\rbrace }
\newcommand{\pwr}[1]{ \left( #1 \right) }
\newcommand{\CITE}{{\color{red}\textsf{CITE}}}
\newcommand{\sH}{{\mathscr H}}
\newcommand{\sL}{{\mathscr L}}
\newcommand{\sM}{{\mathscr M}}
\newcommand{\jax}[1]{{\color{cyan} \sf $\clubsuit\clubsuit\clubsuit$ Jackson: [#1] $\clubsuit\clubsuit\clubsuit$}}
\theoremstyle{theorem}
\numberwithin{equation}{subsection}
\newtheorem{thmx}{\text{Theorem}}
\newtheorem{theorem}[subsubsection]{Theorem}
\newtheorem{lemma}[subsubsection]{Lemma}
\newtheorem{corollary}[subsubsection]{Corollary}
\newtheorem{prop}[subsubsection]{Proposition}
\numberwithin{equation}{subsection}
\theoremstyle{definition}
\newtheorem{definition}[subsubsection]{\text{Definition}}
\newtheorem{notation}[subsubsection]{\text{Notation}}
\newtheorem{remark}[subsubsection]{Remark}
\newtheorem{example}[subsubsection]{Example}
\theoremstyle{remark}
\numberwithin{equation}{subsubsection} \numberwithin{figure}{section}
\DeclareMathOperator{\an}{an}
\DeclareMathOperator{\Pic}{Pic}
\DeclareMathOperator{\aPic}{\widehat{Pic}}
\DeclareMathOperator{\acPic}{\widehat{\underline{Pic}}}
\DeclareMathOperator{\ddiv}{div}
\DeclareMathOperator{\Div}{Div}
\DeclareMathOperator{\aDiv}{\widehat{Div}}
\DeclareMathOperator{\aCaCl}{\widehat{CaCl}}
\DeclareMathOperator{\aPr}{\widehat{Pr}}
\DeclareMathOperator{\Gal}{Gal} 
 \DeclareMathOperator{\Spec}{Spec}
\DeclareMathOperator{\Hom}{Hom}
\DeclareMathOperator{\red}{red}
\newcommand{\cdef}[1]{\textsf{\textit{#1}}}
\renewcommand{\leq}{\leqslant}
\renewcommand{\geq}{\geqslant}
\DeclareMathOperator{\FS}{FS}
\DeclareMathOperator{\tFS}{FS^{\tau}}
\DeclareMathOperator{\gtFS}{g.FS^{\tau}}
\DeclareMathOperator{\Ban}{Ban}
\DeclareMathOperator{\PSH}{PSH}
\DeclareMathOperator{\semip}{sp}
\DeclareMathOperator{\gsemip}{g.sp}
\DeclareMathOperator{\eqv}{eqv}
\DeclareMathOperator{\YZ}{YZ}
\DeclareMathOperator{\cptf}{cptf}
\DeclareMathOperator{\model}{mod}
\DeclareMathOperator{\ssa}{s.sa}
\begin{document}

\title{Global pluripotential theory for adelic line bundles}

\author{Jackson S. Morrow}
\address{Jackson S. Morrow \\
	Department of Mathematics\\
	University of North Texas \\
	Denton, TX 76203, USA}
\email{jackson.morrow@unt.edu}

\begin{abstract}
In this work, we relate recent work of Yuan--Zhang and Song on adelic line bundles over quasi-projective arithmetic varieties to recent advances in pluripotential theory on global Berkovich spaces from Pille-Schneider.

In particular, we establish an equivalence between subcategories of adelic line bundles on quasi-projective varieties and line bundles on their Berkovich analytifications equipped with a continuous plurisubharmonic metric. 
We also provide several applications of this equivalence.

For example, we generalize a construction of Pille-Schneider concerning families of Monge--Amp\`ere measures on analytifications of projective arithmetic varieties to the quasi-projective setting. 
With this construction, we offer a new description of non-degenerate subvarieties which involves Monge--Amp\`ere measures over trivially valued fields. 
Finally, we define a Monge--Amp\`ere measure on the analytification of a quasi-projective arithmetic variety. 
\end{abstract}


\keywords{Adelic line bundles, Berkovich spaces, Pluripotential theory, Monge--Amp\`ere measures}
\date{\today}
\maketitle

\tableofcontents

\section{Introduction}
In this work, an arithmetic variety will refer to a separated, integral scheme of finite type over $\Spec(\mZ)$ such that the structure morphism is flat.  
The purpose of this work is to establish a connection between adelic line bundles on quasi-projective arithmetic varieties introduced by Yuan and Zhang \cite{YuanZhang:AdelicLineBundles} and pluripotential theory on Berkovich spaces over general Banach rings developed by Pille-Schneider \cite{PilleSchneider:Global}.

\subsection*{Motivation}
Heights of algebraic points on varieties play a fundamental role in Diophantine geometry as evident through the proofs of the Mordell conjecture by Faltings \cite{Faltings2} and Vojta  \cite{Vojta:Mordell} and the proofs of Bogomolov conjecture by Ullmo \cite{Ullmo:PositivityPoints} and Zhang \cite{Zhang:EquidistributionSmallPoints}.
For projective arithmetic varieties, Zhang \cite{Zhang:SmallPoints} introduced the notion of an adelic line bundle and used these objects in his proof of the Bogomolov conjecture for abelian varieties. 
Recently, Yuan and Zhang \cite{YuanZhang:AdelicLineBundles} extended Zhang's construction of adelic line bundles to quasi-projective arithmetic varieties. 
These objects have already appeared in several recent works concerning heights of algebraic points and arithmetic dynamics, for example \cite{Yuan:ArithmeticBignessUniformBogomolov, GaoZhang:HeightsPeriodsAlgebraicCyclesFamiles, DemarcoMavraki:DynamicsP1, DemarcoMavraki:GeometryPreperiodicPointsFamilies, MavrakiSchmidt:DynamicalBogomolovFamiliesSplitRational,  JiXie:DAOCurves, Abbound:UnlikelyIntersections}. 

Roughly speaking, an adelic line bundle on a quasi-projective arithmetic variety $\cU$ is a Cauchy sequence of line bundles on projective models $\cX$ of $\cU$ where the metric on this space of model line bundles is induced by a so-called boundary divisor. 
We refer the reader to Section \ref{sec:adelicdivisorsandlinebundles} for details. 
In addition to defining adelic line bundles on quasi-projective arithmetic varieties, Yuan and Zhang \cite[Section 4]{YuanZhang:AdelicLineBundles} also constructed an absolute intersection pairing and a relative intersection pairing using the Deligne pairing, which generalized the classical intersection pairings of Gillet--Soul\'e \cite{GilletSoule:ArithmeticIntersectionTheory, GilletSoule:ArithmeticRiemannRoch}. These results allowed them to establish a notion of height and volume for these adelic line bundles, which lead them to a height inequality and a general equidistribution result \cite[Theorem 5.3.5 \& Theorem 5.4.3]{YuanZhang:AdelicLineBundles}, respectively.  
Their intersection pairing is defined on a class of adelic line bundles, the so-called integrable adelic line bundles. 
Roughly, an adelic line bundle is integrable if it is isometric to a difference of adelic line bundles where each adelic line bundle can be represented as a limit of nef models. 
In their context, nef models of line bundles come equipped with a continuous semipositive metric on the complex points of the line bundle and satisfy an Arakelov intersection theoretic criterion for nefness (cf.~\cite[Appendix A.4]{YuanZhang:AdelicLineBundles}).

While their construction is involved, Yuan and Zhang \cite[Section 3.4]{YuanZhang:AdelicLineBundles} showed that adelic line bundles on quasi-projective arithmetic varieties can be interpreted as certain metrized line bundles on the associated Berkovich analytification of $\cU$. 
We refer the reader to Section \ref{sec:Berkovich} for details on Berkovich analytifications over general Banach rings. 
Very recently, Song \cite{Song:EquivariantAdelic} proved that there is in fact a bijection between adelic line bundles on quasi-projective arithmetic varieties and continuous, norm-equivariant metrized line bundles on the associated Berkovich analytification.

This result establishes a firm relationship between adelic line bundles and metrized line bundles on Berkovich spaces and is the starting point for our work.

\subsection*{Statement of results}
In this work, we identify a subcategory of adelic line bundles on quasi-projective arithmetic varieties, which roughly correspond to those that can be represented as a limit of \textit{semiample} models. 
We note that this definition requires no Arakelov condition, but it does impose a stronger global constraint on the model. 
By requiring semiample models, we are able to relate this class of adelic line bundles to metrized line bundles on Berkovich spaces equipped with certain kind of plurisubharmonic metric. 
Pluripotential theory is well-established in the complex setting \cite{BedforTaylor:CapacityPSH, Demailly:Regularization}, and in recent years, it has been extended to the non-Archimedean setting through the works of \cite{thuillier2005theorie, BakerRumley:PotentialTheory, BouksomFJ:SingularSemiPositive, BoucksomJonsson:SingularPSH}. 
Recently, \cite{PilleSchneider:Global} integrated these approaches into a global theory on Berkovich spaces over general Banach rings.

Our main theorem is the following. 

\begin{thmx}\label{thmx:main0}
Let $\cU$ be a flat, quasi-projective, integral scheme of finite type over $\Spec(\mZ)$. 
Then, the category of strongly semiample adelic line bundles on $\cU$ (\autoref{defn:ssalinebundles}) is equivalent to the category of line bundles on $\cU^{\an}$ equipped with a norm-equivariant and continuous semipositive metric (\autoref{defn:continuous_semipositive}). 
\end{thmx}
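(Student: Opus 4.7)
My plan is to deduce Theorem A by refining Song's equivalence between adelic line bundles on $\cU$ and continuous, norm-equivariant metrized line bundles on $\cU^{\an}$. Since Song's theorem already establishes the bijection at the level of the full categories, the problem reduces to showing that under his correspondence, the strongly semiample adelic line bundles are identified with exactly those metrized line bundles whose metric is continuous and semipositive in the sense of Pille-Schneider. Once the object-level identification is proven, the morphisms automatically match: in both categories morphisms are morphisms of the underlying line bundles subject to a metric compatibility that is transparently preserved under Song's equivalence.

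For the forward direction, I would start with a strongly semiample adelic line bundle $\overline{\cL}$, represented as a Cauchy sequence of semiample models $\overline{\cL}_i$ on projective arithmetic compactifications $\cX_i$ of $\cU$. Each semiample model induces a continuous model metric on the analytification of $\cL$, and because the model is semiample this metric is of Fubini-Study type, hence qualifies as semipositive in Pille-Schneider's global framework (carrying over the well-established semipositivity of semiample model metrics from the non-Archimedean theory of Boucksom-Jonsson). The Cauchy condition translates through Song's correspondence to uniform convergence of these model metrics on compacts of $\cU^{\an}$, and since continuous semipositive metrics form a closed subset under such uniform convergence, the limit metric is continuous and semipositive. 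Norm-equivariance is inherited directly from Song.

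For the reverse direction, given a norm-equivariant continuous semipositive metric on a line bundle $L$ over $\cU^{\an}$, I would invoke Pille-Schneider's characterization (or approximation theorem) that such metrics arise as uniform limits, on compact subsets, of Fubini-Study metrics attached to semiample adelic model data. Each such approximating Fubini-Study metric corresponds via Song's bijection to a semiample model $\overline{\cL}_i$ on some projective compactification $\cX_i$, and the task becomes re-expressing the Berkovich-analytic convergence in the Yuan-Zhang topology, where closeness between models is measured against a boundary divisor on a projective arithmetic compactification. This reconciliation of the two topologies — model-boundary convergence on the adelic side versus uniform-on-compacts convergence on the Berkovich side — should follow from the dictionary set up in Song's work, combined with the quasi-projective extension of Pille-Schneider's global theory.

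The main obstacle I anticipate is precisely this topological reconciliation together with the preservation of semiampleness along the approximation. Pille-Schneider's approximation a priori produces Fubini-Study approximants that might require passing to auxiliary projective compactifications, and one must verify that this can be arranged so that (i) each approximating model is genuinely semiample on a projective arithmetic model of $\cU$, and (ii) the rate of approximation is strong enough to yield the boundary-divisor-controlled Cauchy condition required by Yuan-Zhang. Exploiting the explicit envelope description of Fubini-Study semipositive metrics and judiciously choosing the boundary divisors compatible with the approximating sequence should resolve this, at which point Theorem A follows by packaging both implications through Song's equivalence.
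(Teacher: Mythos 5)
Your outline tracks the paper's strategy closely: reduce to restricting Song's correspondence, handle the forward direction by analytifying the semiample models directly, and handle the reverse direction by converting compact convergence on $\cU^{\an}$ into the boundary-norm Cauchy condition via Song's normalized-boundary apparatus. Two points deserve sharpening. First, the approximation property you propose to ``invoke from Pille-Schneider'' is in fact the \emph{definition} of a continuous semipositive metric in the quasi-projective setting (\autoref{defn:continuous_semipositive}), which is new to this paper; Pille-Schneider's plurisubharmonic notion is stated only for projective $A$-varieties, and \autoref{lemma:proj_continuoussemipositive_continuouspsh} is what reconciles the two, so there is no pre-existing theorem to cite here. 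Second, the reconciliation of topologies that you identify as the main obstacle is resolved in the paper not by ``judiciously choosing boundary divisors compatible with the approximating sequence'' but by a division trick: one passes to arithmetic divisor representatives, quotients the approximating Green's functions $g_{\cE_i}$ by the boundary Green's function $\wt{g}_{\cD_0}$ to obtain continuous functions on the compact normalized boundary $\wt{\cU}^b$ (realized concretely as $\delta_{\overline{\cD}_0}(\cU)$ via \autoref{prop:alternative_normalized_boundary}), where compact convergence upgrades to uniform convergence, and then Yuan--Zhang's comparison of analytic and algebraic effectivity turns the uniform estimates into the inequalities $-\varepsilon_i\overline{\cD}_0\leq \overline{\cE}_i-\overline{\cE}_j\leq\varepsilon_i\overline{\cD}_0$; the nonnegativity and vanishing locus of $\wt{g}_{\cD_0}$ (\autoref{prop:Greens_nonvanishing}) are what make this division legitimate. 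With these two details supplied, your proposal is essentially the paper's proof.
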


\subsection*{New ingredients}
Let $\cU$ be a quasi-projective arithmetic variety, and let $\cL$ be a line bundle on $\cU$. 
The main new aspect of this work is our definition of a continuous semipositive metric $\phi$ on $\cL$ (\autoref{defn:continuous_semipositive}). 
This definition involves the data of a net $(\cX_i,\cL_i)$ of projective models for the pair $(\cU,\cL)$ where each $\cL_i$ is equipped with a global tropical Fubini--Study metric $\phi_i$ (\autoref{defn:tropicalFS}) such that the net $\phi_i$ compactly converges to $\phi$ on $\cU^{\an}$. 
When $\cU$ is projective, we show in \autoref{lemma:proj_continuoussemipositive_continuouspsh} that a continuous semipositive metric is the same thing as a continouous plurisubharmonic metric, which was previously defined by \cite{PilleSchneider:Global}. 
To prove \autoref{thmx:main0}, we relate these metrized line bundles to adelic divisors on (quasi-)projective varieties,  and then use ideas from Song's proof of \cite[Theorem 1.1]{Song:EquivariantAdelic} to deduce our result. 

Before discussing applications, we note that our notion of strongly semiample is related to Zhang's notion \cite{Zhang:PositiveArithmeticVarieties} of semiample metrized adelic line bundles. We refer the reader to \autoref{remark:semiamplemetrized} for details.  

\subsection*{Applications}
Using \autoref{thmx:main0}, we derive several applications:
\begin{enumerate}
\item Defining families of Monge--Amp\`ere measures for continuous semipositive metrized line bundles on the analytification of a quasi-projective arithmetic variety (Subsection \ref{subsec:familiesMA}); 
\item Realizing the invariant adelic line bundle on a polarized dynamical system from Yuan--Zhang as line bundle equipped with a continuous semipositive metric (Subsection \ref{subsec:invariant_adelic});
\item Providing a characterization of a non-degenerate subvariety involving Monge--Am\`pere measures over infinite, trivially valued fields (Subsection \ref{subsec:Nondegeneracy_trivially}); 
\item Defining a global Monge--Amp\`ere measure using the family of Monge--Amp\`ere measures for continuous semipositive metrized line bundles (Section \ref{sec:globalMA}). 
\end{enumerate}

The applications described in Section \ref{sec:applications} mainly follow from Yuan and Zhang's proofs in the strongly nef setting. 
That being said, our definitions allows us to define Monge--Amp\`ere measure for the pullback of a continuous semipositive metric along a trivially valued point via the same weak limit process. 
This extra flexibility is crucial in our application to a non-degeneracy criterion over infinite, trivially valued fields. 
Finally, our global Monge--Amp\`ere measure is defined using a classical pullback measure construction where the fiberwise measures are defined via Subsection \ref{subsec:familiesMA} and the measure on the base, the Berkovich analytification of $\mZ$, appears to be new. 

\subsection*{Organization}
In Section \ref{sec:Berkovich}, we recall background on Berkovich analytic spaces over general Banach rings. 
Section \ref{sec:globalpluripotential} discusses background on pluripotential theory on global Berkovich spaces. In this section, we introduce our new notion of continuous semipositive metric (\autoref{defn:continuous_semipositive}). 
We recall background on metrized line bundles and arithmetic divisors on projective arithmetic varieties in Section \ref{sec:metrizedprojectivearithmetic}, and use these notions in Section \ref{sec:adelicdivisorsandlinebundles} when recalling Yuan and Zhang's \cite{YuanZhang:AdelicLineBundles} definition of adelic divisors and line bundles on quasi-projective arithmetic varieties. 
In Section \ref{sec:proofs}, we introduce the subcategory of strongly semiample adelic line bundles on quasi-projective varieties and prove \autoref{thmx:main0}. 
We describe applications of \autoref{thmx:main0} in Section \ref{sec:applications}, and finally, in Section \ref{sec:globalMA}, we define a global variant of the Monge--Amp\`ere measure on a global Berkovich space. 

\subsection{Conventions}
\label{subsec:conventions_AG}
Let $k$ be a field. A \cdef{$k$-variety} is a separated, integral scheme of finite type over $\Spec(k)$. 
Let $A$ be a commutative ring with identity. 
An \cdef{$A$-variety} is a separated, integral scheme of finite type over $\Spec(A)$ such that the structure morphism is flat. 
In the special case when $A = \mZ$, we will refer to an $A$-variety as an \cdef{arithmetic variety}. 
Generally, we will use Roman letters $X,Y,Z$ for varieties defined over fields or general commutative rings and calligraphic letters $\cX,\cY,\cZ$ for arithmetic varieties. We will freely use the notions of projective and quasi-projective arithmetic varieties (more generally, $A$-varieties), and we refer the reader to \cite[\href{https://stacks.math.columbia.edu/tag/01W7}{Tag 01W7}]{stacks-project} and \cite[\href{https://stacks.math.columbia.edu/tag/01VV}{Tag 01VV}]{stacks-project}, respectively, for the definitions and basic properties.

\subsection*{Acknowledgements}
We thank Ziyang Gao and L\'eonard Pille-Schneider for helpful comments on a first draft.  
During preparation of this manuscript, the author was partially supported by NSF DMS-2418796.

\section{Berkovich spaces}
\label{sec:Berkovich}
In this section, we recall aspects of the theory of Berkovich spaces over Banach rings. 

\subsection{Berkovich spaces over commutative Banach rings}
Let $(A,|\cdot|_{\Ban})$ be a commutative Banach ring with identity i.e., a non-zero ring $A$ equipped with a sub-multiplicative norm $|\cdot|_{\Ban}$ such that $A$ is complete with respect to $|\cdot|_{\Ban}$. 
A primary example for our purposes will be $A = \mZ$ endowed with the standard absolute value $|\cdot|$. 
The Berkovich spectrum $\sM(A)$ is the set whose points $x\in \sM(A)$ are multiplicative seminorms $|\cdot|_x\colon A \to \mR_{\geq 0}$ satisfying $|\cdot|_x \leq |\cdot|_{\Ban}$. 
The set $\sM(A)$ is equipped with the topology of pointwise convergence on $A$ which makes it into a non-empty, Hausdorff, compact topological space with a continuous map
\begin{equation}\label{eqn:kernelmap}
\iota \colon \sM(A) \to \Spec(A),\quad |\cdot|_x \mapsto \mathfrak{p}_x = \ker(|\cdot|_x)
\end{equation}
which we will refer to as the \cdef{kernel map}. 
Via this map, we can view $|\cdot|_x$ as a norm on the residue field $\kappa(\iota(x))$. 
We say that the completion $\sH_x$ of the field $(\kappa(\iota(x)),|\cdot|_x)$ is called the \cdef{residue field at $x$}. 
If $\sH_x$ is non-Archimedean, let $R_{x}$ denote the valuation ring of $\sH_x$. 
We refer the reader to \autoref{exam:BerkovichoverZ}, \cite[Example 1.4.1]{BerkovichSpectral}, or \cite[Example 1.3]{PilleSchneider:Global} for a detailed discussion of $\sM(\mZ)$. 

When $R$ is a finitely generated $A$-algebra, the Berkovich analytification $\sM(R)$ of $\Spec(R)$ is the set of semi-norms $|\cdot|_y$ on $R$ whose restriction to $A$ belongs to $\sM(A)$ i.e., whose restriction to $A$ is bounded by $|\cdot|_{\Ban}$. The set $\sM(R)$ is endowed with the coarsest topology making the map $y \mapsto |\cdot|_y$ continuous for $y \in A$, and it comes with a structure map $\sM(R) \to \sM(A)$ sending each $|\cdot|_y$ to its restriction to $A$. 

If $X$ is an $A$-scheme of finite type, then $X$ is covered by open affine schemes $\{ \Spec(R_i)\}$ with $R_i$ being finitely generated $A$-algebras. 
Then, one can glue the analytifications of the affine charts to define the analytic space $X^{\an}$.
We will refer to $X^{\an}$ as a \cdef{Berkovich $A$-analytic space} or as an \cdef{$A$-analytic space}.
The space $X^{\an}$ comes equipped with a structure map $\pi\colon X^{\an} \to \sM(A)$. The kernel maps on each $\sM(R_i)$ induce a continuous kernel map 
\[
\iota\colon X^{\an} \to X.
\] 
Normally, it will be clear which Banach ring we are working over, but sometimes it will be convenient to use the notation $(X/(A,|\cdot|_{\Ban}))^{\an}$ in order to incorporate this data. 

\subsection{Topological properties of Berkovich spaces over commutative Banach rings}
Similar to Berkovich spaces defined over non-Archimedean fields, $X^{\an}$ satisfies nice topological properties. 

\begin{prop}[\protect{\cite[Lemmas 1.1 \& 1.2]{Berkovich:WeightZero}}]\label{prop:Berkovichproperties}
The following properties hold.
\begin{enumerate}
\item If $X$ is of finite type over $A$, then $X^{\an}$ is locally compact and countable at infinity.
\item If $X$ is separated and of finite type over $A$, then $X^{\an}$ is Hausdorff.
\item If $X$ is projective over $A$, then $X^{\an}$ is compact. 
\end{enumerate}
\end{prop}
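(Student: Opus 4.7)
The plan is to reduce each assertion to the affine case using the affine cover that defines $X^{\an}$, and then to work concretely with seminorms on finitely generated $A$-algebras via Gauss-type bounds.

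For (1), after choosing a finite affine cover $\{\Spec(R_i)\}$ of $X$, I would present each $R_i$ as a quotient of $A[T_1,\dots,T_n]$ and, for $r>0$, consider the subset
\[
E_r(R_i) = \{|\cdot|_y \in \sM(R_i) : |T_j|_y \leq r \text{ for all } j\}.
\]
The bound $|f|_y \leq \sum_I |a_I|_{\Ban}\, r^{|I|}$ for $f = \sum a_I T^I$ shows that $E_r(R_i)$ embeds as a closed subset of the product $\prod_{f \in R_i}[0, c_f(r)]$, since multiplicativity, sub-additivity, and the bound $|\cdot|_y \leq |\cdot|_{\Ban}$ on $A$ are all closed conditions in the topology of pointwise convergence. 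By Tychonoff, $E_r(R_i)$ is compact. Since $\sM(R_i) = \bigcup_{n \in \mN} E_n(R_i)$ with each point in the interior of some $E_n$, $\sM(R_i)$ is locally compact and countable at infinity, and these properties pass to the glued space $X^{\an}$ by finiteness of the cover.

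For (2), I would first handle the affine case: if $|\cdot|_y \neq |\cdot|_{y'}$ in $\sM(R)$, there is some $f \in R$ with $|f|_y \neq |f|_{y'}$, and continuity of the evaluation map $y \mapsto |f|_y$ yields disjoint open neighborhoods. For two points of $X^{\an}$ lying in distinct affine charts $U^{\an}, V^{\an}$, I would use that $X$ being separated means the diagonal $\Delta\colon X \to X \times_A X$ is a closed immersion; analytification preserves this, so $U^{\an} \cap V^{\an}$ is closed in the Hausdorff space $U^{\an} \times_{\sM(A)} V^{\an}$, which, combined with the affine case, produces the required separation by a standard open-cover argument.

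For (3), the closed immersion $X \hookrightarrow \PP^n_A$ analytifies to a topological embedding with closed image, so it suffices to show that $(\PP^n_A)^{\an}$ is compact. I would realize it as a continuous image of the compact set
\[
S = \{|\cdot|_y \in \sM(A[T_0,\dots,T_n]) : \max_i |T_i|_y = 1\},
\]
which is a closed subset of the compact $E_1(A[T_0,\dots,T_n])$ from part (1). A seminorm in $S$ has $|T_j|_y = 1$ for some $j$, and restricting to the standard affine chart $A[T_0/T_j,\dots,T_n/T_j]$ produces a point of that chart's analytification; these choices glue to a continuous surjection $S \twoheadrightarrow (\PP^n_A)^{\an}$. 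The main obstacle I anticipate is precisely in this last step: verifying well-definedness (compatibility across different indices $j$ with $|T_j|_y = 1$ under the gluing data of $\PP^n_A$) and surjectivity (every point of $(\PP^n_A)^{\an}$ lifts to $S$ after a suitable renormalization in the homogeneous coordinate ring) requires care, since seminorms cannot be rescaled arbitrarily when passing through the Banach bound $|\cdot|_y \leq |\cdot|_{\Ban}$ on the base $A$.
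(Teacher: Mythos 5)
The paper does not prove this proposition; it is cited directly from Berkovich's Lemmas 1.1 and 1.2, so there is no in-text argument to compare against. Evaluating your proposal on its own merits: part (1) is essentially the standard Berkovich argument and is fine. Parts (2) and (3) each contain a genuine gap.

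For (2), the step ``analytification preserves this, so $U^{\an}\cap V^{\an}$ is closed in $U^{\an}\times_{\sM(A)}V^{\an}$'' conflates two different fiber products. Separatedness gives a closed immersion $U\cap V\hookrightarrow U\times_A V$, and analytification yields a closed topological embedding $(U\cap V)^{\an}\hookrightarrow (U\times_A V)^{\an}$; but the natural continuous map $(U\times_A V)^{\an}\to U^{\an}\times_{\sM(A)}V^{\an}$ (where the target carries the subspace topology from the topological product $U^{\an}\times V^{\an}$) is not a homeomorphism — the analytic fiber product has more points than the topological one. So closedness in $(U\times_A V)^{\an}$ does not directly transfer. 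A clean fix is to reduce to fibers of $\pi\colon X^{\an}\to\sM(A)$: since $\sM(A)$ is Hausdorff, two points in different fibers are separated by pulling back disjoint opens from the base; two points in the same fiber $\pi^{-1}(s)\cong X_{\sH_s}^{\an}$ are separated because $X_{\sH_s}$ is separated over the complete valued field $\sH_s$, where the Hausdorff property of the analytification is known, and one then checks that these separating opens can be fattened to opens of $X^{\an}$.

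For (3), you correctly anticipate the obstacle, and it is real: surjectivity of $S\twoheadrightarrow(\PP^n_A)^{\an}$ requires lifting a multiplicative seminorm $|\cdot|_z$ on the chart $B_j=A[T_0/T_j,\dots,T_n/T_j]$ to one on $A[T_0,\dots,T_n]$ with $\max_i|T_i|=1$. The natural candidate is the Gauss extension on $B_j[T_j]$ with $|T_j|=1/\max_i|T_i/T_j|_z$, but the Gauss extension $|\sum_k g_k T_j^k|=\sup_k|g_k|_z r^k$ is multiplicative only when $|\cdot|_z$ is non-Archimedean (ultrametric Gauss lemma); for example, over $\mR$ with $r=1$ one has $|(1+T)^2|=2\neq 1=|1+T|^2$, so the Archimedean fibers need a separate argument using the classification of multiplicative seminorms bounded over $\mR$ or $\mC$. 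Your sketch leaves this two-case analysis open, so as written the proof of (3) is incomplete. A cleaner route that avoids lifting altogether is to cover $(\PP^n_A)^{\an}$ directly by the $n+1$ compact sets
\[
K_j := \bigl\{\, y\in\{T_j\neq 0\}^{\an}=\sM(B_j) : |T_i/T_j|_y\leq 1 \text{ for all }i \,\bigr\},
\]
each compact by your argument in (1) with $r=1$: given any $y\in(\PP^n_A)^{\an}$ lying in chart $j$, if $\max_i|T_i/T_j|_y\leq 1$ then $y\in K_j$; otherwise pick $i_0$ achieving the maximum $M>1$, note $|T_{i_0}/T_j|_y=M\neq 0$ so $y$ lies in chart $i_0$, and there $|T_i/T_{i_0}|_y=|T_i/T_j|_y/M\leq 1$, so $y\in K_{i_0}$. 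Thus $(\PP^n_A)^{\an}=\bigcup_j K_j$ is compact, and the closed image of $X^{\an}$ inside it is compact.
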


Using \cite[Definition 1.5.3]{BerkovichSpectral}, any $A$-analytic space comes equipped with a sheaf of analytic functions, and hence the above defines an analytification functor $X \mapsto X^{\an}$ where the category of $A$-analytic spaces is defined in \cite{LemanissierPoineau:BerkZ}. We refer the reader to \cite[Examples 1.5 \& 1.6]{PilleSchneider:Global} for examples of these constructions when when $A$ is $\mC$ and a complete, non-Archimedean field, respectively. 
In these cases, the above constructions topologically recover complex analytic spaces and Berkovich spaces defined over non-Archimedean fields. 

The next proposition tells us that a Berkovich $A$-analytic can be viewed as a family of Berkovich spaces over complete valued fields. 

\begin{prop}[\protect{\cite[Proposition 1.8]{PilleSchneider:Global}}]\label{prop:basechange_homeo}
Let $A$ be a Banach ring, $X$ a finite type $A$-scheme, and $\pi\colon X^{\an} \to \sM(A)$ the associated $A$-analytic space. 
If $x\in \sM(A)$, then $\pi^{-1}(x)$ is canonically homeomorphic to the analytification of the base change $X_{\sH_x} = X \times_A \sH_x$ with respect to the absolute value $|\cdot|_x$ on $\sH_x$. 
Moreover, the base change map $F_x\colon X^{\an}_{\sH_x} \to X^{\an}$ is the inclusion $\pi^{-1}(x) \subset X^{\an}$ under this homeomorphism. 
\end{prop}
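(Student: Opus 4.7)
The strategy is to reduce to the affine case, exhibit the bijection by extending seminorms from $R$ to $R \otimes_A \sH_x$, and then glue. Since $X$ is of finite type over $A$, it admits an affine open cover $\{\Spec(R_i)\}$, and $X^{\an}$ is obtained by gluing the spaces $\sM(R_i)$ along the analytifications of the intersections. The base change $X_{\sH_x}$ is covered by $\{\Spec(R_i \otimes_A \sH_x)\}$ with the analogous gluing data, so it suffices to establish the homeomorphism on each affine chart, compatibly with gluing.

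For $X = \Spec(R)$, I would construct the bijection as follows. Given $y \in \pi^{-1}(x)$, the multiplicative seminorm $|\cdot|_y$ on $R$ restricts to $|\cdot|_x$ on $A$. Its kernel contains $\ker(|\cdot|_x)\cdot R$, so $|\cdot|_y$ factors through $R \otimes_A \kappa(\iota(x))$, and the restriction of this descended seminorm to $\kappa(\iota(x))$ coincides with the one induced by $|\cdot|_x$. Because $R \otimes_A \kappa(\iota(x))$ is dense in $R \otimes_A \sH_x$ for the topology induced by $|\cdot|_x$, and seminorms bounded by a fixed seminorm are $1$-Lipschitz hence uniformly continuous, $|\cdot|_y$ extends uniquely to a multiplicative seminorm on $R \otimes_A \sH_x$ restricting to $|\cdot|_x$ on $\sH_x$. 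This yields an element of $X^{\an}_{\sH_x}$, and pulling back along $R \to R \otimes_A \sH_x$ defines the inverse map.

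The main obstacle is showing that this set-theoretic bijection is a homeomorphism, which reduces to checking that the topologies on both sides are compatible. The subspace $\pi^{-1}(x) \subset X^{\an}$ carries the topology of pointwise convergence of seminorms on $R$, while $X^{\an}_{\sH_x}$ carries the topology of pointwise convergence on $R \otimes_A \sH_x$. Since an extension is completely determined by its values on $R$ together with the fixed restriction $|\cdot|_x$ on $\sH_x$, pointwise convergence on $R \otimes_A \sH_x$ for seminorms in the fiber is equivalent to pointwise convergence on $R$; moreover one verifies directly that convergence of a net on $R$ implies convergence of the extensions on any fixed element of $R \otimes_A \sH_x$ by approximating it from the dense subring. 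Multiplicativity of the extended seminorm is preserved in the limit by continuity of multiplication. Globalizing via the affine cover uses uniqueness of the extension to see that the local bijections glue on overlaps $\sM(R_i) \cap \sM(R_j)$.

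Finally, the identification of the base change map $F_x \colon X^{\an}_{\sH_x} \to X^{\an}$ with the inclusion $\pi^{-1}(x) \hookrightarrow X^{\an}$ is tautological from the construction: $F_x$ sends a seminorm on $R \otimes_A \sH_x$ to its restriction along $R \to R \otimes_A \sH_x$, which is precisely the inverse of the extension map built above.
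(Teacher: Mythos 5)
The paper itself offers no proof of this proposition: it quotes it verbatim from Pille-Schneider's \cite{PilleSchneider:Global}, so your reconstruction has no in-paper argument to be measured against and must be judged on its own.

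On those terms your outline is the standard one and is substantially correct, but two steps are stated more loosely than they should be. First, when you say that $|\cdot|_y$ ``factors through $R\otimes_A\kappa(\iota(x))$,'' note that $R\otimes_A\kappa(\iota(x))$ is a \emph{localization} of $R/(\mathfrak{p}_x R)$, not the quotient. The inclusion $\mathfrak{p}_x R\subset\ker|\cdot|_y$ (which you do get from multiplicativity and $|\cdot|_y|_A=|\cdot|_x$) only gives the descent to $R/(\mathfrak{p}_x R)=R\otimes_A(A/\mathfrak{p}_x)$. To pass further you must use that $|\cdot|_y$ is multiplicative and nonzero on the image of $A\setminus\mathfrak{p}_x$, and then extend to the localization by $|\,r\otimes(a/b)\,| := |r\otimes a|_y/|b|_x$; this is routine but it is not a consequence of the kernel statement alone. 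Second, the assertion that ``seminorms bounded by a fixed seminorm are $1$-Lipschitz hence uniformly continuous'' presupposes a reference seminorm on $R\otimes_A\sH_x$ which you have not exhibited, and the Banach norm on $A$ does not directly furnish one on this $\sH_x$-algebra. What actually rescues the argument is that $R$ is of finite type over $A$: fix generators $t_1,\dots,t_n$, so every $f\in R\otimes_A\sH_x$ can be written as $\sum_\alpha c_\alpha t^\alpha$ with finitely many $c_\alpha\in\sH_x$; the triangle inequality then gives, for any candidate extension,
\[
\bigl|\,|f| - |g|\,\bigr|\ \leq\ |f-g|\ \leq\ \sum_\alpha |c_\alpha - c'_\alpha|_x\,|t^\alpha|_y,
\]
and since the finitely many constants $|t^\alpha|_y$ are fixed, approximating the coefficients by elements of $\kappa(\iota(x))$ produces a Cauchy sequence of values, which yields existence, uniqueness, and multiplicativity of the extension all at once. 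This same estimate, with $|t^\alpha|_{y_i}$ uniformly bounded along a convergent net $y_i\to y$, is what you need to upgrade pointwise convergence on $R$ to pointwise convergence on $R\otimes_A\sH_x$ and so conclude that the bijection is a homeomorphism, rather than merely asserting that the two pointwise topologies match. With those two clarifications your extension/restriction bijection is correct, the gluing over affine charts is as routine as you say, and the identification of $F_x$ with the inclusion of the fiber is indeed tautological.
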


\begin{notation}\label{notion:basechange}
Keep the notation as in \autoref{prop:basechange_homeo}. 
For any point $x\in \sM(A)$, we let 
\[
F_x\colon X^{\an}_{\sH_x} \to X^{\an}
\]
denote the base change map. 
\end{notation}

\begin{example}\label{exam:BerkovichoverZ}
Let $(A,|\cdot|_{\Ban}) = (\mZ,|\cdot|)$ where $|\cdot|$ is the standard absolute value, and let $\cX$ be an arithmetic variety. 
The analytification $\pi\colon \cX^{\an} \to \sM(\mZ)$ can be defined via disjoint unions
\[
\cX^{\an} = \bigsqcup_{x\in \sM(\mZ)} \cX_{\sH_x}^{\an}.
\]
Any point $x\in \sM(\mZ)$ is an absolute value of $\mZ$, and from Ostrowski's theorem, these absolute values (and hence points of $\sM(\mZ)$) can be described as follows:
\begin{itemize}
\item the trivial absolute value $x_0 := |\cdot|_0$,
\item the Archimedean absolute values $x_{\infty,\varepsilon} := |\cdot|_{\infty,\varepsilon} = |\cdot|_{\infty}^{\varepsilon}$ with $\varepsilon\in (0,1]$,
\item for any prime $p$, $x_{p,\varepsilon} := |\cdot|_{p,\varepsilon} = |\cdot|_{p}^{\varepsilon}$ with $\varepsilon \in (0,\infty]$ where $|\cdot|_{p,\infty}$ corresponds to the trivial absolute value on $\mF_p$. Note that $|n|_{p,\infty} = 0$ if $p\mid n$ and is 1 otherwise. 
\end{itemize}

As such, $\sM(\mZ)$ can be described as an infinite wedge of segments parametrized by prime numbers $p$ and $\infty$ which are glued together at the trivial absolute value $x_0$. The segments corresponding to the finite primes $p$ are homeomorphic to $[0,\infty]_p$ with the point $0$ corresponding to $x_0$ and $\infty$ to $x_{p,\infty}$ and the segment corresponding to $\infty$ is homeomorphic to $[0,1]_{\infty}$ with similar identifications as before. 
The subscripts on the intervals keep track of the place i.e., this determines the direction of the interval. 
Furthermore, the map $\iota\colon \sM(\mZ)\to \Spec(\mZ)$ sends the points $x_{p,\infty}$ to the prime ideal $(p)$ and every other point to the generic point of $\Spec(\mZ)$. 

Via this description of $\sM(\mZ)$, we can identify the following fibers of $\pi\colon \cX^{\an} \to \sM(\mZ)$:
\begin{enumerate}
\item $\cX_{\sH_{x_0}}^{\an} \cong (\cX_{\mQ}/(\mQ,|\cdot|_0))^{\an}$ i.e, the Berkovich analytification of $\cX_{\mQ}$ over the trivially valued field $(\mQ,|\cdot|_0)$. 
\item $\cX_{\sH_{x_{\infty,1}}}^{\an} \cong (\cX_{\mR}/(\mR,|\cdot|_{\infty,1}))^{\an}$ i.e, the Berkovich analytification of $\cX_{\mR}$ over the Archimedean valued field $(\mR,|\cdot|_{\infty,1})$, which is homeomorphic to $\cX(\mC)/\Gal(\mC/\mR)$.
\item $\cX_{\sH_{x_{p,1}}}^{\an} \cong (\cX_{\mQ_p}/(\mQ_p,|\cdot|_{p,1}))^{\an}$ i.e, the Berkovich analytification of $\cX_{\mQ_p}$ over the non-Archimedean valued field $(\mQ_p,|\cdot|_{p,1})$, and 
\item $\cX_{\sH_{x_{p,\infty}}}^{\an} \cong (\cX_{\mF_p}/(\mF_p,|\cdot|_{p,\infty}))^{\an}$  i.e, the Berkovich analytification of $\cX_{\mF_p}$ over the trivially valued field $(\mF_p,|\cdot|_{p,\infty})$. 
\end{enumerate}
For each prime $p$ and each $\varepsilon \in (0,\infty)$, \cite[Lemme 2.9]{Poineau:Dynamics1} asserts that the spaces $\cX_{\sH_{x_{p,1}}^{\an}}$ and $\cX_{\sH_{x_{p,\varepsilon}}^{\an}}$ are isomorphic as locally ringed spaces. 
A similar statement holds for the fibers over the $(0,1]_{\infty}$. 
\end{example}

\subsection{The reduction map}
Next, we recall the construction of the reduction map for proper Berkovich spaces. 
Let $(A,|\cdot|_{\Ban}) = (\mZ,|\cdot|)$, let $\cX$ be a proper arithemtic variety, and let $\cX^{\an} = (\cX/(\mZ,|\cdot|))^{\an}$ be the analytification with structure morphism $\pi\colon \cX^{\an}\to \sM(\mZ)$. 
Since $\cX$ is proper, there is a reduction map 
\[
\red\colon \cX^{\an} \to \cX
\]
defined via the valuative criterion for properness. 
More precisely, for each $x\in \cX^{\an}\setminus \pi^{-1}((0,1]_{\infty})$ i.e., the non-Archimedean points, the natural map $\Spec (\sH_x) \to \cX$ uniquely extends to a morphism $\Spec (R_{x}) \to \cX$ via the valuative criterion for properness, and we define $\red(x)$ to be the image of the closed point of $\Spec (R_x)$, and for $x \in \pi^{-1}((0,1]_{\infty})$ i.e., the Archimedean points, we define $\red(x)$ to be the image of $\Spec (\sH_x)$ in $\cX$. 
An important feature of $\red$ is that it is anti-continuous meaning that the pre-image of an open subset is closed. 

\subsection{The interior and normalized boundary of Berkovich spaces}
\label{subsec:interior_normalizedboundar}
Later, we will need the notion of the interior and normalized boundary of Berkovich spaces, which was recently introduced by Song \cite{Song:EquivariantAdelic}. 
As evident from \autoref{exam:BerkovichoverZ}, there are many ``redundant'' points of a Berkovich space over $\sM(\mZ)$.  
It is useful to define an equivalence relation on the points. 

\begin{definition}\label{defn:normequivalent}
Let $\cX$ be an arithmetic variety, and let $\cX^{\an} = (\cX/(\mZ,|\cdot|))^{\an}$ be the analytification. 
We say that two points $x,y \in \cX^{\an}$ are \cdef{norm-equivalent}, denoted by $x \sim y$ if $\iota(x) = \iota(y)$ and $|\cdot|_y = |\cdot|_x^t$ for some positive real number $t$ where $\iota$ is the kernel map defined above. We endow $\cX^{\an}/\sim$ with the quotient topology. 
\end{definition}

\begin{definition}\label{defn:interiorboundary}
The \cdef{interior part of $\cX^{\an}$}, denoted by $\cX^{\beth} \subset \cX^{\an}$, is the set of all points in $\cX^{\an}$ such that:
\begin{enumerate}
\item $|a|_x\leq 1$ for all $a\in \mZ$, and 
\item there exists a morphism $\varphi\colon \Spec R_x \to \cX$ such that the commutative diagram
\[
\begin{tikzcd}
\Spec \sH_x \arrow{r} \arrow{d} & \cX \arrow{d} \\
\Spec R_x \arrow{r} \arrow[dashed]{ru}{\varphi} & \Spec \mZ
\end{tikzcd}
\]
commutes. Note that since $\cX$ is separated, $\varphi$ is necessarily unique. 
\end{enumerate} 
The \cdef{boundary part $\cX^b$ of $\cX^{\an}$} is defined to be $\cX^{b} := \cX^{\an} \setminus \cX^{\beth}$, and the \cdef{normalized boundary} is $\wt{\cX}^b := \cX^b/\sim$, again endowed with the quotient topology. 
\end{definition}

\begin{notation}
We can define the reduction map 
\[
\red^{\beth}\colon \cX^{\beth} \to \cX
\]
similar to the proper setting i.e., $\red^{\beth}(x)$ is equal to the image of the closed point of $\Spec R_x$ in $\cX$. 
\end{notation}

%

Let $\cU$ be a quasi-projective arithmetic variety. In \cite{Song:EquivariantAdelic}, Song proved that $\cU^{\beth}$ is a compact, Hausdorff topological space by noting that for any projective model $\cX$ of $\cU$ over $\Spec \mZ$, we have that $\cX^{\beth} \cong (\cX/(\mZ,|\cdot|_0))^{\an}$, which is compact, Hausdorff by \autoref{prop:Berkovichproperties}.(3). Moreover, we can realized $\cU^{\beth} = \red^{\beth, -1}(\cU)$ where $\red^{\beth}\colon \cX^{\beth} \to \cX$, and the result follows from the anti-continuity of $\red^{\beth}$. 
Additionally, he proved that $\wt{\cU}^b$ is compact, Hausdorff using the fact that
\[
\sM(\mZ,|\cdot|) \cong \sM(\mZ,|\cdot|_{0}) \cup \sM(\mQ,|\cdot|_{\text{hyb}})
\]
where $(\mZ,|\cdot|_0)$ (resp.~$(\mQ,|\cdot|_{\text{hyb}})$) is $\mZ$ (resp.~$\mQ$) endowed with the trivial norm (resp.~the hybrid norm). 
We refer the reader to \cite[Section 2.2]{Song:EquivariantAdelic} for details on hybrid norms, but one should think of $\sM(\mQ,|\cdot|_{\text{hyb}})$ as corresponding to the Archimedean segment of $\sM(\mZ)$. 

In summary, we have the following result. 

\begin{theorem}[\protect{\cite[Theorem 2.5]{Song:EquivariantAdelic}}]
Let $\cU$ be a quasi-projective arithmetic variety, and let $\cU^{\an} = (\cU/(\mZ,|\cdot|))^{\an}$ be the analytification. 
Then, 
\begin{enumerate}
\item $\cU^{\beth}$ is a compact, Hausdorff subspace of $\cU^{\an}$, and 
\item $\wt{\cU}^b$ is a compact, Hausdorff topological space when endowed with quotient topology. 
\end{enumerate}
\end{theorem}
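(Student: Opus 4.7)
The overall strategy, sketched in the paragraph preceding the statement, is to reduce both parts to the compactness and Hausdorffness of analytifications of projective arithmetic varieties over suitable Banach rings---to which \autoref{prop:Berkovichproperties}(3) applies---and then to pass to sub- or quotient spaces. The first move is to fix an open immersion $\cU \hookrightarrow \cX$ into a projective arithmetic variety, which exists because $\cU$ is quasi-projective over $\mZ$.

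For (1), I first identify $\cX^{\beth}$ with $(\cX/(\mZ,|\cdot|_0))^{\an}$: condition (1) of \autoref{defn:interiorboundary} says precisely that $|\cdot|_x$ is dominated by the trivial norm on $\mZ$, and condition (2) is automatic for the proper variety $\cX$ by the valuative criterion of properness. By \autoref{prop:Berkovichproperties}(3), this analytification is compact Hausdorff. Next, I verify the identity $\cU^{\beth} = (\red^{\beth})^{-1}(\cU)$ inside $\cX^{\beth}$: for $x \in \cX^{\beth}$, the extending morphism $\varphi_x\colon \Spec R_x \to \cX$ factors through the open subscheme $\cU$ if and only if it sends the closed point of $\Spec R_x$ into $\cU$, since the closed point specializes every point of $\Spec R_x$ and $\cU$ is open. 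Applying the anti-continuity of $\red^{\beth}$ to the closed set $\cX \setminus \cU$ then shows that $\cU^{\beth}$ is closed in $\cX^{\beth}$, hence compact Hausdorff.

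For (2), the plan is to combine a similar closed-subset argument on the trivially-valued side with an Archimedean analysis coming from the hybrid norm. Using the decomposition $\sM(\mZ,|\cdot|) \cong \sM(\mZ,|\cdot|_0) \cup \sM(\mQ,|\cdot|_{\text{hyb}})$, I would split $\cU^{\an}$ into a piece above $\sM(\mZ,|\cdot|_0)$---to which the method of part (1) readily adapts---and a piece above $\sM(\mQ,|\cdot|_{\text{hyb}})$, which sits inside the compact Hausdorff space $(\cX/(\mQ,|\cdot|_{\text{hyb}}))^{\an}$ furnished again by \autoref{prop:Berkovichproperties}(3). The crucial feature of the hybrid norm is that the scaling orbits $|\cdot|_x \mapsto |\cdot|_x^t$ appearing in the norm-equivalence relation $\sim$ are continuously parametrized within the hybrid analytification, so the normalized boundary $\wt{\cU}^b$ can be realized as a quotient of a closed subspace of these compact Hausdorff spaces glued along the trivial-valuation locus.

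The main obstacle I anticipate is showing that this quotient is Hausdorff: compactness of a continuous image of a compact space is automatic, but Hausdorffness requires that the graph of $\sim$ is closed in $\cU^b \times \cU^b$. This is precisely where the hybrid structure is essential---the scaling classes should coincide with fibers of a continuous map to a Hausdorff quotient of $\sM(\mZ) \setminus \{x_0\}$ by the $\mR_{>0}$-rescaling action, forcing the graph of $\sim$ to be closed, after which passing to the quotient preserves the Hausdorff property.
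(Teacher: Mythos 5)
Your argument for part (1) is correct and matches the route indicated in the paragraph preceding the statement: identify $\cX^{\beth}$ with $(\cX/(\mZ,|\cdot|_0))^{\an}$ (using, as you note, the valuative criterion of properness to make condition (2) of \autoref{defn:interiorboundary} automatic), apply \autoref{prop:Berkovichproperties}(3), show $\cU^{\beth} = (\red^{\beth})^{-1}(\cU)$, and invoke anti-continuity of the reduction map. This is exactly how the paper says Song argues.

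For part (2), however, your sketch has a genuine gap. You reduce to showing the graph of $\sim$ is closed in $\cU^b \times \cU^b$ and conclude that the quotient is then Hausdorff. But closedness of the graph of an equivalence relation is merely \emph{necessary}, not sufficient, for the quotient to be Hausdorff; sufficiency requires additionally that the quotient map be open (or that the domain be compact, or some other such hypothesis). Neither of these is available here: $\cU^b = \cU^{\an} \setminus \cU^{\beth}$ is an \emph{open} subset of the (merely locally compact) $\cU^{\an}$, hence not compact; and the scaling relation $x \mapsto |\cdot|_x^t$ is \emph{not} a genuine $\mR_{>0}$-action on $\cU^b$ (for an Archimedean point $x$, the admissible range of $t$ is truncated because $|\cdot|_x^t$ must remain bounded by the Banach norm), so the quotient map is not obviously open. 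Your penultimate sentence already gestures at the correct resolution---realizing $\wt{\cU}^b$ as the image of a compact Hausdorff subspace---but you never construct this subspace or verify it maps bijectively to $\wt{\cU}^b$.

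The argument that actually closes this gap, paraphrased later in the paper as \autoref{prop:alternative_normalized_boundary}, is a \emph{fundamental-domain} construction: fix a boundary divisor $(\cX_0,\overline{\cD}_0)$ and its analytified Green's function $\wt{g}_{\cD_0}$, which is strictly positive on $\cU^b$ and norm-equivariant. Using the level set $\{\wt{g}_{\cD_0}=1\}$ (together with a correction near orbits where the maximum value stays below $1$) one selects a canonical representative from each $\sim$-class, producing an explicit subspace $\delta_{\overline{\cD}_0}(\cU) \subset \cU^{\an}$ that is closed in a compact ambient space, hence compact Hausdorff, and is homeomorphic to $\wt{\cU}^b$. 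That exhibition of a compact Hausdorff slice is the content you are missing; the hybrid decomposition you cite is used to prove compactness of that slice, not to run a closed-graph argument on the non-compact $\cU^b$ directly.
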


\section{Global pluripotential theory on Berkovich spaces}
\label{sec:globalpluripotential}
In this section, we recall global pluripotential theory on Berkovich spaces. 
We refer the reader to \cite{BoucksomEriksson:SpaceNorms, BoucksomJonsson:SingularPSH, PilleSchneider:Global} for details on metrized line bundles on Berkovich spaces and pluripotential theory over $\mC$, non-Archimedean non-trivially valued fields, and trivially valued fields. 

Let $(A,|\cdot|_{\Ban})$ be an integral Banach ring, let $X$ be an $A$-variety, let $L$ be a line bundle on $X$, and let $X^{\an}$ denote the Berkovich $A$-analytic space associated to $X$.

\subsection{Metrized line bundles on Berkovich spaces}
\label{subsec:metrizedlinebundles}
First, we describe the general setup of metrics on line bundles on Berkovich spaces. 
At each point $x\in X^{\an}$, let $\iota(x)$ denote the image of $x$ in $X$.

\begin{definition}
The \cdef{fiber} $L^{\an}_x$ of $L^{\an}$ at $x$ is defined to be $\sH_x$-line $L_{\iota(x)} \otimes_{\kappa(\iota(x))} \sH_x$ or equivalently the completion of the fiber $L_{\iota(x)}$ of $L$ at $\iota(x)$ with respect to the semi-norm $|\cdot|_x$. 
Let $L^{\an,\times}$ denote the complement of $L^{\an}$ of the image of the zero section, and let $L^{\an,\times}_x \coloneqq L^{\an,\times} \cap L^{\an}_x$. 
\end{definition}

\begin{definition}
A \cdef{metric} of $L$ over $X^{\an}$ consists of the following data:~for each $x\in X^{\an}$, there is a norm $\nrm{\cdot}_x\colon L^{\an}_x \to \mR_{\geq 0}$ defined via $\nrm{v}_x = c(x)|T(v)|_x$ where $c(x)\in (0,\infty)$, which is compatible with the norm $|\cdot|_x$ on $\sH_x$ in the sense that for any $a\in \sH_x$, $v\in L^{\an,\times}_x$, $\nrm{av}_x = |a|_{x}\nrm{v}_x$. 
We will also use the additive notion for metrics by identifying $\nrm{\cdot}$ with $\phi := -\log\nrm{\cdot}$, and let $\phi_x := -\log\nrm{\cdot}_x$. 
The pair $(L,\nrm{\cdot})$ (or equivalently $(L,\phi)$) is called a \cdef{metrized line bundle} over $X^{\an}$. 
\end{definition}

We define several properties attached to metrized line bundles. 

\begin{definition}
Let $(L,\phi)$ be a metrized line bundle over $X^{\an}$. 
\begin{enumerate}
\item The metric $\phi$ on $L$ is \cdef{continuous} if for any section $s$ on a Zariski open subset $U$ of $X$, the function $\phi(s(x)) = \phi(s(x))_x$ is continuous in $x\in U^{\an}$. 
\item The metric $\phi$ on $L$ is \cdef{norm-equivariant} if for any rational section $s$ of $L$ and any points $x, x_1 \in X^{\an}\setminus |\ddiv(s)|^{\an}$ satisfying $|\cdot|_x = |\cdot|_{x_1}^t$ for some $0\leq t < \infty$ locally over $\cO_{X^{\an}}$, we have that $\phi(s(x))_x = t\cdot \phi(s(x_1))_{x_1}$. 
\end{enumerate}
\end{definition}

Next, we define singular metrics.

\begin{definition}
A \cdef{singular metric} on a line bundle $L$ over $X^{\an}$ is a function $\phi\colon L \to \mR \cup \{-\infty\}$ such that for every $x\in X^{\an}$, the restriction $\phi_{L_x^{\an}}$ is either $\equiv -\infty$ or a metric on $L_x^{\an}$. 
\end{definition}

The main example of singular metrics comes from a global section of a line bundle. 

\begin{example}\label{example:sectionmetric}
Let $m\geq 1$ and let $s$ be any global section of $H^0(X,mL)$. 
The section $s$ defines a singular metric
\[
\phi \coloneqq \log |s|
\]
on $L$ as follows:~for any $x\in X^{\an}$, we have that $\phi(s(x)) = 0$ if $s(x)\neq 0$ and $\phi \equiv -\infty$ on $L_x^{\an}$ if $s(x) = 0$. 
In other words, 
\[
\phi(v)_x = \log |v/s(x)|_x
\]
for $v\in L_x^{\an}$. 
Note that $\phi$ is singular on the zero locus of $s$. 
\end{example}

\subsection{Base change of metrics}
\label{subsec:basechange}
Before we discuss pluripotential theory, we describe how metrics behave under base change of Banach rings. 
Let $(A,|\cdot|_A)$ and $(B,|\cdot|_B)$ be two Banach rings with a bounded ring homomorphism $A\to B$. This induces a continuous map $F_B\colon X_B^{\an} \to X^{\an}$ via the fibered diagram
\[
\begin{tikzcd}
X_B^{\an} \arrow{r}{F_B} \arrow{d} & X^{\an}\arrow{d} \\
\sM(B) \arrow{r} & \sM(A).
\end{tikzcd}
\]
For a line bundle $L$ on $X$, there is an induced line bundle $L_B^{\an} \coloneqq L^{\an} \otimes_{\cO_{X^{\an}}} \cO_{X_B^{\an}}$, which can be identified with $F_B^*L^{\an}$. This map induces a map $F_B^*L^{\an} \cong L_B^{\an} \to L^{\an}$, and if $\phi$ is a metric on $L^{\an}$, then we have an induced metric $F_B^*\phi \coloneqq \phi \circ F_B$ on $L_{B}^{\an}$. 
We refer the reader to \cite[Paragraph after Definition 2.7]{PilleSchneider:Global} for further details.

\begin{example}\label{example:base_change_metrics}
Let $(A,|\cdot|_{\Ban})$ be a Banach ring, and let $x\in \sM(A)$. By definition, we have a bounded homomorphism of Banach rings $A \to \sH_x$, and so any metric $\phi$ on $L^{\an}$ induces by base change a metric $\phi_x$ on $(X_{\sH_x}^{\an},L_{x}^{\an})$. 
By \autoref{prop:basechange_homeo}, the metric can also be identified as the restriction of $\phi$ to the fiber $\pi^{-1}(x)$ of the structure map $\pi\colon X^{\an} \to \sM(A)$. 
\end{example}

\subsection{Continuous plurisubharmonic metrics}
We now turn the main objects of pluripotential theory on general Berkovich spaces. 
In order to keep our discussion concise, we will not recall this theory over fields, but instead refer the reader to \cite{BoucksomJonsson:SingularPSH, BoucksomEriksson:SpaceNorms}. 
The theory we present was developed by Pille-Schneider \cite{PilleSchneider:Global} and was motivated by the complex and non-Archimedean theories. 

Let $L$ be a \cdef{semiample} line bundle on $X$, and recall that $L$ being semiample asserts that there is some multiple of it which is base point free. 

First, we define the building blocks of plurisubharmonic metrics. 

\begin{definition}\label{defn:tropicalFS}
A \cdef{tropical Fubini--Study metric} on $L^{\an}$ is a continuous metric of the form
\[
\phi \coloneqq m^{-1} \max_{j\in J}(\log |s_j| + \lambda_j)
\]
where $(s_j)_{j\in J}$ is a finite collection of sections of $mL$ without common zeros, $\lambda_j \in \mR$, and $\log |s_j|$ refers to the metric defined in \autoref{example:sectionmetric}.

We write $\FS^{\tau}(L)$ for the set of tropical Fubini--Study metrics on $L$, and if $L = \cO_{X^{\an}}$, then we simply say that $\phi$ is a tropical Fubini--Study function on $X$. 
If the constants $\lambda_j$ are all zero above, then we say that $\phi$ is a \cdef{pure} tropical Fubini--Study metric.
\end{definition}

We refer the reader to \cite[Proposition 2.14]{PilleSchneider:Global} for basic properties of tropical Fubini--Study metrics. 
Before defining plurisubharmonic metrics, we need a definition.

\begin{definition}\label{defn:Zariksidensepoints}
Let $(A,|\cdot|_{\Ban})$ be an integral Banach ring, and write $\eta_A$ for the generic point of $\Spec(A)$. 
The subset of \cdef{Zariski dense} points $\sM(A)^{\eta} \subset \sM(A)$ is defined to be $\iota^{-1}(\eta_A)$ where $\iota$ is the kernel map from \eqref{eqn:kernelmap}. 
\end{definition}

\begin{example}
From \autoref{exam:BerkovichoverZ}, we see that the only non-Zariski dense points of $\sM(\mZ)$ are $\brk{x_{p,\infty} : p \text{ prime}}$. 
\end{example}

We now arrive a Pille-Schneider's definition of plurisubharmonic metrics.

\begin{definition}[\protect{\cite[Definition 2.18]{PilleSchneider:Global}}]\label{defn:psh}
A \cdef{plurisubharmonic} metric $\phi$ on $L^{\an}$ is a singular metric on $L^{\an}$ that is the pointwise limit of a decreasing net of tropical Fubini--Study metrics on $L^{\an}$ and such that $\phi_x \not\equiv -\infty$ for all $x\in \sM(A)^{\eta}$ where $\phi_x$ is the restriction of $\phi$ to $X^{\an}_{\sH_x}$. 
We write $\PSH(L,X)$ or $\PSH(L)$ for the set of plurisubharmonic metrics on $L$. 
A metric is said to be \cdef{continuous plurisubharmonic} if it is continuous and plurisubharmonic as defined above. 
\end{definition}

Taking motivation from Yuan--Zhang \cite{YuanZhang:AdelicLineBundles}, we introduce a new notion of continuous semipositive metrics on analytificaitons line bundles over quasi-projective $A$-varieties. 
First, we need a definition.  

\begin{definition}\label{defn:projectivemodel}
Let $U$ be a quasi-projective $A$-variety, and let $L$ be a line bundle on $U$. 
We say that the pair $(X,L')$ is a \cdef{projective $A$-model for $(U,L)$} if there exists an open $A$-immersion $\iota\colon U \hookrightarrow X$ (i.e., $X$ is a projective $A$-model for $U$) and $L' \in \Pic(X)_{\mQ}$ such that $\iota^*L' \cong L$. 
\end{definition}

\begin{definition}\label{defn:continuous_semipositive}
Let $U$ be a quasi-projective $A$-variety. 
We say a metric $\phi$ on $L^{\an}$ over $U^{\an}$ is \cdef{continuous semipositive} if there exists the following data:
\begin{itemize}
\item a net of projective $A$-models $(X_i,L_i)$ of $(U,L) $ such that
\item each $L_i^{\an}$ admits a tropical Fubini--Study metric $\phi_i$ (\autoref{defn:tropicalFS}) where
\item the restrictions $\phi_{i|U^{\an}}$ compactly converge to $\phi$ on $U^{\an}$  i.e., for every compact subset $K\subset U^{\an}$, we have that $\phi_{i|K}$ converges uniformly to $\phi_{|K}$. 
\end{itemize}
Note that $\phi_{i|K}$ converges uniformly to $\phi_{|K}$ if and only if $\phi_{i|K} - \phi_{|K}$ converges uniformly to the zero function on $K$. 
Also, since $U^{\an}$ is locally compact (\autoref{prop:Berkovichproperties}.(1)), we have that $\phi_{i|U^{\an}}$ converges pointwise to $\phi$, and the compact convergence of $\phi_{i|U^{\an}}$ implies that $\phi$ is in fact a continuous metric, as implied by the name. 
\end{definition}

We illustrate how this relates to previous notions.

\begin{lemma}\label{lemma:proj_continuoussemipositive_continuouspsh}
Let $X$ be a projective $A$-variety, and $L$ a line bundle on $X$. 
A metric $\phi$ is continuous semipositive if and only if $\phi$ is continuous plurisubharmonic in the sense of \cite{PilleSchneider:Global} (\autoref{defn:psh}). 
\end{lemma}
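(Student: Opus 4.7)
The plan is to reduce both conditions to the same underlying data --- a net of tropical Fubini--Study metrics on $L^{\an}$ converging to $\phi$ --- and then pass between uniform convergence and decreasing pointwise convergence using a constant-shift trick and Dini's theorem in its net form.

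First I would reduce the range of allowable models. When $U = X$ is projective, any projective $A$-model $(X_i, L_i)$ of $(X, L)$ in the sense of \autoref{defn:projectivemodel} must satisfy $X_i = X$: the open $A$-immersion $\iota \colon X \hookrightarrow X_i$ has dense image (being a nonempty open of the irreducible $X_i$) and closed image (since $X$ is proper over $A$), so it is an isomorphism. Thus $L_i \in \Pic(X)_\mQ$ with $L_i \cong L$ in $\Pic(X)_\mQ$, and after clearing denominators a tropical Fubini--Study metric on $L_i^{\an}$ is canonically identified with one on $L^{\an}$. Since $X^{\an}$ is compact by \autoref{prop:Berkovichproperties}.(3), compact convergence on $X^{\an}$ is uniform convergence, and the ``continuous semipositive'' condition reduces to the existence of a net $(\phi_i) \subset \FS^{\tau}(L)$ with $\phi_i \to \phi$ uniformly on $X^{\an}$.

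For the direction continuous semipositive $\Rightarrow$ continuous plurisubharmonic, given such a uniformly convergent net I would extract a cofinal sequence $\phi_{i_k}$ with $\|\phi_{i_k} - \phi\|_{\infty} \leq 3^{-k}$ and set $\psi_k \coloneqq \phi_{i_k} + 2\cdot 3^{-k}$. Adding a real constant to a tropical Fubini--Study metric simply shifts the constants $\lambda_j$ in \autoref{defn:tropicalFS} and preserves the property; the bound $|\phi_{i_k} - \phi_{i_{k+1}}| \leq 3^{-k} + 3^{-(k+1)}$ then yields $\psi_k \geq \psi_{k+1} \geq \phi$ pointwise with $\psi_k \to \phi$. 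The fiberwise non-degeneracy $\phi_x \not\equiv -\infty$ for $x \in \sM(A)^{\eta}$ is automatic since $\phi$ is finite-valued and continuous. This realizes $\phi$ as the pointwise limit of a decreasing net of tropical Fubini--Study metrics, as required by \autoref{defn:psh}.

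Conversely, suppose $\phi_j \searrow \phi$ with $(\phi_j) \subset \FS^{\tau}(L)$ and $\phi$ continuous. Each $\phi_j - \phi$ is a non-negative continuous function on the compact space $X^{\an}$, the net $(\phi_j - \phi)$ is decreasing in $j$, and its pointwise infimum is $0$. Dini's theorem in its net form --- for every $\varepsilon > 0$, the open sets $U_j = \{\phi_j - \phi < \varepsilon\}$ form an increasing open cover of $X^{\an}$, so compactness produces a single index $j_0$ with $U_{j_0} = X^{\an}$ --- yields uniform convergence, hence compact convergence. Taking the constant net of models $(X_j, L_j) = (X, L)$ then supplies the data of \autoref{defn:continuous_semipositive}, so $\phi$ is continuous semipositive. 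The only points requiring care are that the constant-shift trick really produces a decreasing (sub)net of tropical Fubini--Study metrics and that Dini's theorem for nets is available on the compact $X^{\an}$; both are straightforward once isolated, so I do not expect a serious obstacle in the projective case.
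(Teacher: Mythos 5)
Your argument is correct, and its first half coincides with the paper's: both reduce to the observation that a projective $A$-variety has no projective $A$-model other than itself (your open-immersion argument is logically equivalent to the paper's, which instead invokes that a morphism between projective $A$-schemes has closed image), and both use compactness of $X^{\an}$ (\autoref{prop:Berkovichproperties}.(3)) to translate ``compact convergence'' into ``uniform convergence.'' Where you diverge is the final step. The paper at this point simply delegates to \cite[Proposition 2.21]{PilleSchneider:Global}, which already asserts that on a compact projective analytification a continuous metric is a uniform limit of tropical Fubini--Study metrics if and only if it is continuous plurisubharmonic. You instead reprove that equivalence from scratch: the constant-shift trick (adding $2\cdot 3^{-k}$ to a well-chosen subsequence to manufacture a decreasing sequence of tropical Fubini--Study metrics, using that a shift by a real constant only perturbs the $\lambda_j$ in \autoref{defn:tropicalFS}) for one direction, and Dini's theorem for nets on the compact space $X^{\an}$ for the other. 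Your route is more self-contained and more elementary; the paper's is shorter by leveraging the existing statement in \cite{PilleSchneider:Global}. Incidentally, the paper does carry out the same constant-shift/Dini interchange explicitly in the complex setting (the proof of \autoref{thm:eqiuvalentcomplexHermitianline}), so you are in good company. Two tiny remarks: the word ``cofinal'' is not actually needed --- any sequence of indices with $\|\phi_{i_k}-\phi\|_\infty\le 3^{-k}$ suffices for the argument --- and the non-degeneracy $\phi_x\not\equiv -\infty$ on $\sM(A)^\eta$ is indeed automatic because $\phi$ is a (finite-valued) metric by hypothesis, as you say.
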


\begin{proof}
When $X$ is a projective $A$-scheme with line bundle $L$, the only projective $A$-model for $(X,L)$ is $(X,L)$. 
Indeed, any $A$-morphism from between projective $A$-varieties will be projective, and hence will have closed image. 
Since we require an $A$-variety to be integral, this implies the statement.

Additionally, we note that $X^{\an}$ will be a compact Berkovich $A$-analytic space by \autoref{prop:Berkovichproperties}.(3). 
As such, we see that a metric $\phi$ is continuous plurisubharmonic if and only if there exists a net of tropical Fubini--Study metrics converging uniformly to $\phi$. 
To conclude, we note that \cite[Proposition 2.21]{PilleSchneider:Global} asserts that this is equivalent to being continuous plurisubharmonic.
\end{proof}

\begin{lemma}\label{lemma:proj_restrictedcontinuoussemipositive}
Let $X$ be a projective $\mZ$-scheme, let $\overline{L} = (L,\phi)$ be a metrized semiample line bundle with $\phi$ being continuous semipositive. 
Then, for every $x\in \sM(\mZ)$, the restriction of $\phi$ to the fiber $\pi^{-1}(x)\cong X^{\an}_{\sH_x}$ is a continuous semipositive metric in the appropriate sense. 
\end{lemma}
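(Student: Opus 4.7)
The plan is to chase through the definition of continuous semipositive and verify that every constituent piece pulls back under the base change $F_x\colon X_{\sH_x}^{\an} \to X^{\an}$ furnished by \autoref{prop:basechange_homeo}. Since $X$ is projective over $\mZ$, the proof of \autoref{lemma:proj_continuoussemipositive_continuouspsh} shows that the only projective $\mZ$-model of $(X,L)$ is $(X,L)$ itself, and $X^{\an}$ is compact by \autoref{prop:Berkovichproperties}.(3). Hence continuity semipositivity of $\phi$ amounts to producing a net of tropical Fubini--Study metrics $\phi_i \in \FS^{\tau}(L)$ converging uniformly to $\phi$ on all of $X^{\an}$.

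Next, I would verify that tropical Fubini--Study metrics are stable under base change. Writing
\[
\phi_i = m_i^{-1}\max_{j\in J_i}\bigl(\log|s_{i,j}| + \lambda_{i,j}\bigr)
\]
for sections $s_{i,j} \in H^0(X, m_i L)$ without common zeros, the pulled-back sections $F_x^*s_{i,j} = s_{i,j}\otimes 1 \in H^0(X_{\sH_x}, m_i L_{\sH_x})$ still have no common zeros, since the surjection $\mathcal{O}_X^{\oplus J_i} \twoheadrightarrow m_i L$ given by these sections remains surjective after base change. Moreover, by the identification in \autoref{prop:basechange_homeo}, the absolute value $|s_{i,j}|$ restricted to the fiber $\pi^{-1}(x)$ corresponds exactly to $|F_x^*s_{i,j}|$ on $X_{\sH_x}^{\an}$. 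Thus
\[
F_x^*\phi_i = m_i^{-1}\max_{j\in J_i}\bigl(\log|F_x^*s_{i,j}| + \lambda_{i,j}\bigr)
\]
is a tropical Fubini--Study metric on $L_{\sH_x}^{\an}$ over $X_{\sH_x}^{\an}$.

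Finally, the uniform convergence $\phi_i \to \phi$ on the compact space $X^{\an}$ restricts to uniform convergence $F_x^*\phi_i \to F_x^*\phi$ on the closed subspace $\pi^{-1}(x) \cong X_{\sH_x}^{\an}$. Combined with the previous step, this exhibits a net of tropical Fubini--Study metrics on $L_{\sH_x}^{\an}$ converging uniformly to $F_x^*\phi$. Since $X_{\sH_x}$ is a projective $\sH_x$-variety, this is the precise condition required for $F_x^*\phi$ to be continuous semipositive in the sense of \autoref{defn:continuous_semipositive} (equivalently, continuous plurisubharmonic in the sense of \cite{PilleSchneider:Global} via \autoref{lemma:proj_continuoussemipositive_continuouspsh} applied over $\sH_x$).

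The main point requiring care is the base change compatibility of the tropical Fubini--Study formula, specifically the preservation of the no-common-zeros condition and the identification of absolute values across $F_x$; both follow from unwinding \autoref{prop:basechange_homeo}, but are the only nontrivial ingredients in the argument.
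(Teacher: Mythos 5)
Your proof is correct and follows essentially the same route as the paper: reduce to the observation that projectivity makes continuous semipositivity equivalent to being a uniform limit of tropical Fubini--Study metrics on the compact space $X^{\an}$, then note that tropical Fubini--Study metrics pull back to tropical Fubini--Study metrics along $F_x$ and that uniform convergence restricts to the fiber. Two small cosmetic differences: the paper cites \cite[Proposition 2.14.(7)]{PilleSchneider:Global} for the pullback stability of tropical Fubini--Study metrics where you verify it directly by tracing sections and the no-common-zeros condition through base change, and in the final step the paper makes the phrase ``in the appropriate sense'' precise by citing \cite[Proposition 5.20]{BoucksomJonsson:SingularPSH} for the non-Archimedean fibers and \autoref{thm:eqiuvalentcomplexHermitianline} for the Archimedean ones, whereas you land on \autoref{defn:continuous_semipositive} applied over $\sH_x$ (equivalently, continuous psh via \autoref{lemma:proj_continuoussemipositive_continuouspsh}); these formulations are equivalent, though if the intended conclusion is the classical Hermitian or Boucksom--Jonsson notion on each fiber, you would want to add the explicit references that make the translation.
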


\begin{proof}
Note that for every $x\in \sM(\mZ)$, $X^{\an}_{\sH_x}$ is a projective Berkovich $\sH_x$-analytic space.
In the proof of \autoref{lemma:proj_continuoussemipositive_continuouspsh}, we showed there exists a net of tropical Fubini--Study metrics converging uniformly to $\phi$. 
As each $X^{\an}_{\sH_x}$ is compact, we have that the restriction of $
\phi$ to $X^{\an}_{\sH_x}$, denoted by $\phi_x$, can be represented as the uniform limit of a net of tropical Fubini--Study metrics as tropical Fubini--Study metrics pullback to tropical Fubini--Study metrics by \cite[Proposition 2.14.(7)]{PilleSchneider:Global}. 
The claim now follows from \cite[Proposition 5.20]{BoucksomJonsson:SingularPSH} for the non-Archimedean points and \autoref{thm:eqiuvalentcomplexHermitianline} for the Archimedean points. 
\end{proof}
\begin{notation}
We introduce the following notation for the groups and categories we have just introduced. 
As above, let $(A,|\cdot|_{\Ban})$ denote an integral Banach ring, and let $X$ be an $A$-variety. 
We define the following categories
\begin{align*}
\acPic(X^{\an})_{\eqv} &\coloneqq \text{category of continuous, norm-equivariant metrized line bundles on $X^{\an}$},\\
\acPic(X^{\an})_{\eqv,\tFS} &\coloneqq \text{category of norm-equivariant, tropical Fubini--Study metrized line bundles on $X^{\an}$},\\
\acPic(X^{\an})_{\eqv,\semip} &\coloneqq \text{category of norm-equivariant, continuous semipositive metrized line bundles on $X^{\an}$}.
\end{align*}
where the objects in these categories are the corresponding metrized line bundles, and the morphisms are isometries, namely metric preserving isomorphism. 
We will also use this notation without the underline to simply denote the corresponding group of such line bundles. 
\end{notation}
\subsection{Arithmetic divisors on Berkovich spaces}
We now define arithmetic divisors on Berkovich spaces. 
Recall that $(A,|\cdot|_{\Ban})$ is an integral Banach ring, $X$ is an $A$-variety, and $X^{\an}$ is the Berkovich $A$-analytic space associated to $X$. 

\begin{definition}
Let $D$ be a Cartier divisor on $X$.
A \cdef{Green's function} of $D$ over $X^{\an}$ is a continuous function $g_D\colon X^{\an}\setminus |D|^{\an} \to \mR$ with logarithmic singularity along $D$ in the sense that for any rational function $f$ over a Zariski open subset $U$ of $X$ such that $\ddiv(f) = D_{|U}$, the function $g_D + \log |f|$ can be extended to a continuous function on $U^{\an}$. 
The pair $\overline{D} = (D,g_D)$ is called an \cdef{arithmetic divisor} over $X^{\an}$. 

We say that an arithmetic divisor is \cdef{effective} if $D$ is effective and $g_D$ is non-negative over $X^{\an}\setminus |D|^{\an}$. 
An arithmetic divisor is called \cdef{principal} if it is of the form
\[
\widehat{\ddiv}_{X^{\an}}(f)\coloneqq (\ddiv(f),-\log |f|)
\]
for some non-zero rational function $f$ over $X$. 
We stress that the divisor $D$ in $\overline{D}$ is algebraic, and the rational function in a principal arithmetic divisor is also algebraic. 

An arithmetic divisor $\overline{D} = (D,g_{D})$ or its Green's function $g_{D}$ is called \cdef{norm-equivariant} if for any points $x_1,x_2\in X^{\an}\setminus |D|^{\an}$ satisfying $|\cdot|_{x} = |\cdot|_{x_1}^t$ for some $0\leq t < \infty$ locally over $\cO_{X^{\an}}$, we have that $g(x) = tg(x_1)$. 
Note that principal arithmetic divisors are norm-equivariant. 
\end{definition}

The pair $(D^{\an},g_D)$ defines a metrized line bundle $(\cO(D),\nrm{\cdot}_g)$ with the metric defined by $\nrm{s_D}_g = e^{-g}$ where $s_D$ is a section of $\cO(D^{\an})$ corresponding to the meromorphic function $1$ on $X^{\an}.$
By this correspondence, we have that $g$ is continuous if and only if $\nrm{\cdot}_g$ is continuous. 
We make the following definition.

\begin{definition}\label{defn:Berk_tfs_semi_arithmeticdivisors}
For $D$ an Cartier divisor on $X$ with Green's function $g_D$, we say that $g_D$ is \cdef{tropical Fubini--Study} (resp.~\cdef{continuous semipositive}) if $\nrm{\cdot}_g$ is a tropical Fubini--Study (resp.~continuous semipositive) metric. 
\end{definition}

\begin{notation}
With the above definition, we have the following groups.
\begin{align*}
\aDiv(X^{\an}) &\coloneqq \text{ group of arithmetic divisors on $X^{\an}$},\\
\aDiv(X^{\an})_{\eqv} &\coloneqq \text{ group of norm-equivariant arithmetic divisors on $X^{\an}$},\\
\aDiv(X^{\an})_{\eqv, \tFS} &\coloneqq \text{ group of norm-equivariant, tropical  Fubini--Study arithmetic divisors on $X^{\an}$},\\
\aDiv(X^{\an})_{\eqv, \semip} &\coloneqq \text{ group of norm-equivariant, continuous semipositive arithmetic divisors on $X^{\an}$},\\
\aPr(X^{\an}) &\coloneqq \text{ group of principal arithmetic divisors on $X^{\an}$}.
\end{align*}
As in the classical theory, we can form the quotient of the first two groups by the last one to realize the class group of arithmetic divisors; in particular, we introduce the following groups:
\begin{align*}
\aCaCl(X^{\an}) &\coloneqq \aDiv(X^{\an})/\aPr(X^{\an}) \hspace{.5em}(\text{arithmetic class group of $X^{\an}$}),\\
\aCaCl(X^{\an})_{\eqv} &\coloneqq \aDiv(X^{\an})_{\eqv}/\aPr(X^{\an}) \hspace{.5em}(\text{norm-equivariant arithmetic class group of $X^{\an}$}).
\end{align*}
\end{notation}
As in the classical setting, we have an isomorphism between these class groups and the Picard groups introduced in Subsection \ref{subsec:metrizedlinebundles}. 

\begin{prop}[\protect{\cite[Section 3.2.2]{YuanZhang:AdelicLineBundles}}]\label{prop:classisom_Berkovich}
There are canonical isomorphisms
\begin{align*}
\aCaCl(X^{\an}) & \cong \aPic(X^{\an}) \\
\aCaCl(X^{\an})_{\eqv} & \cong \aPic(X^{\an})_{\eqv}.
\end{align*}
\end{prop}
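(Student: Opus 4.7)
The plan is to explicitly construct mutually inverse maps between arithmetic divisors modulo principal ones and metrized line bundles, mirroring the classical algebraic correspondence between Cartier divisors and line bundles, and then check that this restricts to the norm-equivariant subgroups.

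First, I would define the forward map $\aDiv(X^{\an}) \to \aPic(X^{\an})$ by sending an arithmetic divisor $\overline{D} = (D, g_D)$ to the pair $(\cO(D), \nrm{\cdot}_{g_D})$, where $\nrm{s_D}_{g_D} = e^{-g_D}$ for the canonical rational section $s_D$ of $\cO(D)$ corresponding to the constant function $1$. The continuity of $g_D$ and its logarithmic singularity along $D$ translate, via a local trivialization $\cO(D)_{|U} \cong \cO_U \cdot s_D$ where $s_D = 1/f$ for a local equation $f$ of $D$, into the statement that $-\log\nrm{\cdot}_{g_D}$ extends continuously; hence the metric is continuous. Compatibility with addition of divisors and tensor product of metrized line bundles is immediate from $\cO(D_1 + D_2) \cong \cO(D_1)\otimes\cO(D_2)$ and $g_{D_1+D_2} = g_{D_1} + g_{D_2}$, yielding a group homomorphism.

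Next, I would show this map is surjective by choosing, for any continuously metrized $(L, \nrm{\cdot})$, a non-zero rational section $s$ of $L$ (which exists since $X$ is integral), setting $D = \ddiv(s)$ and $g_D = -\log\nrm{s}$; then the canonical isomorphism $\cO(D) \xrightarrow{\sim} L$ sending $s_D \mapsto s$ is an isometry by construction. To identify the kernel, suppose $\overline{D}$ maps to the trivial metrized line bundle $(\cO_X, \nrm{\cdot}_{\triv})$; an isometry $(\cO(D), \nrm{\cdot}_{g_D}) \to (\cO_X, \nrm{\cdot}_{\triv})$ is determined by the image of $s_D$, which is a rational function $f\inv$ with $\ddiv(f) = D$, and the isometry condition forces $g_D = -\log|f|$, so $\overline{D} = \widehat{\ddiv}(f) \in \aPr(X^{\an})$. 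Conversely, every principal arithmetic divisor visibly maps to the trivial class. This gives the first isomorphism.

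For the norm-equivariant version, observe that the identification $\nrm{s_D}_{g_D} = e^{-g_D}$ translates norm-equivariance of the Green's function directly into norm-equivariance of the induced metric: for rational sections $s$, $\phi(s(x))_x = t\cdot\phi(s(x_1))_{x_1}$ whenever $|\cdot|_x = |\cdot|_{x_1}^t$ is equivalent to $g(x) = t g(x_1)$ for the corresponding Green's function off $|\ddiv(s)|^{\an}$. Since principal arithmetic divisors are themselves norm-equivariant (as noted before the statement), the map $\aDiv(X^{\an})_{\eqv} \to \aPic(X^{\an})_{\eqv}$ is well-defined, and both surjectivity and the kernel computation restrict cleanly: the rational section $s$ chosen in the surjectivity argument produces a Green's function that inherits norm-equivariance from the metric, and the isomorphism witnessing triviality lands in the principal (hence norm-equivariant) subgroup.

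The main technical obstacle is checking that the continuity of the metric on all of $L^{\an}$ corresponds precisely to continuity of $g_D$ on $X^{\an}\setminus|D|^{\an}$ together with logarithmic singularities along $|D|^{\an}$, which is really a local verification on a trivializing cover; on overlaps, the transition functions are regular invertible, so the data glue compatibly. Once this local check is in place, the rest is formal group-theoretic bookkeeping analogous to the standard Cartier divisor / line bundle dictionary.
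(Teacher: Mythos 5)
Your proposal is correct and matches the construction the paper sketches (the map $\overline{D}\mapsto(\cO(D),\nrm{\cdot}_{g_D})$ with $\nrm{s_D}_{g_D}=e^{-g_D}$, inverse given by $\overline{\cL}\mapsto\widehat{\ddiv}(s)$ for a rational section $s$); the paper itself delegates the verification to Yuan--Zhang, Section 3.2.2, and your argument fills in exactly the standard divisor--line-bundle dictionary. One minor notational point: under an isometry $(\cO(D),\nrm{\cdot}_{g_D})\to(\cO_X,\nrm{\cdot}_{\triv})$, the canonical section $s_D$ (corresponding to the meromorphic function $1$) maps to $f$, not $f^{-1}$, when $D=\ddiv(f)$; the conclusion $g_D=-\log|f|$ and $\overline{D}=\widehat{\ddiv}(f)$ is unaffected.
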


This leads to the following definition.

\begin{definition}\label{defn:classgroup_eqvsemip_Berk}
We define the \cdef{class group of norm-equivariant, continuous semipositive arithmetric divisors} on $X^{\an}$, denoted by $\aCaCl(X^{\an})_{\eqv, \semip}$, to be the inverse image of $\aPic(X^{\an})_{\eqv,\semip}$ under the isomorphism from \autoref{prop:classisom_Berkovich}. 
\end{definition}

\section{Metrized line bundles and arithmetic divisors}
\label{sec:metrizedprojectivearithmetic}
In this section, we recall the definition of metrized line bundles and arithmetic divisors on projective arithmetic varieties and conclude by discussing how to analytify these objects. 
We remind the reader of our conventions from Subsection \ref{subsec:conventions_AG} regarding projective arithmetic varieties. 

\subsection{Hermitian line bundles on projective arithmetic varieties}
To begin our discussion on metrized line bundles, we first recall the complex analytic background.

\subsubsection{Metrized line bundles on complex analytic spaces}
Let $X(\mC)$ be a reduced and irreducible complex analytic variety endowed with the complex analytic topology. Let $L$ be a line bundle on $X(\mC)$.

\begin{definition}
A \cdef{continuous metric} of $L$ on $X(\mC)$ is the data of a metric $\nrm{\cdot}$ on the fiber $L_x$ above every point $x\in X(\mC)$ which varies continuously in that for any local section $s$ of $L$ defined on an open subset $U$ of $X$, the function $\nrm{s(x)}$ is continuous in $x\in U$. We will often use the additive notation for metrics by setting $\phi := -\log \nrm{\cdot}$. 
\end{definition}

For any continuous metric $\nrm{\cdot}$ of $L$ on $X(\mC)$, the \cdef{Chern current} 
\[
c_1(L,\nrm{\cdot}) \coloneqq \frac{1}{\pi i}\partial \overline{\partial} \log\nrm{s} + \delta_{\ddiv(s)}
\]
is a $(1,1)$-current on $X(\mC)$, where $s$ is any meromorphic section of $L$ on $X(\mC)$.  

\begin{definition}
We say that a continuous metric $\nrm{\cdot}$ of $L$ on $X(\mC)$ is called \cdef{semipositive} if the Chern current is a positive current or equivalently if for any local section $s$ of $L$ which is analytic and everywhere non-vanishing on an open subset $U$ of $X(\mC)$, the function $-\log\nrm{s(x)}$ is plurisubharmonic on $U$. 
\end{definition}

An important result in complex pluripotential theory is Demailly's regularization theorem \cite{Demailly:Regularization} which illustrates how semipositive metrics can be described from simpler metrics when $X(\mC)$ is smooth and $L$ is ample. 
There are variants of this result which apply to the non-smooth setting and when $L$ is semiample due to \cite[Theorem 7.1]{BoucksomEriksson:SpaceNorms}. 
For our purposes, we will need a version of this result from \cite{PilleSchneider:Global}. 
First, we recall a definition.

\begin{definition}\label{defn:Hermitian_TFS}
A continuous metric $\phi = -\log\nrm{\cdot}$ of $L$ on $X(\mC)$ is said to be a \cdef{tropical Fubini--Study metric} if 
\[
\phi  = m^{-1}\max_{j\in J} (\log |s_j| + \lambda_j)
\]
where $(s_j)_{j\in J}$ is a finite collection of global sections of $mL$ without common zero and $\lambda_j \in \mR$. 
\end{definition}

Clearly, $L$ admits a tropical Fubini--Study metric if and only if $L$ is semiample. 
Next, we recall the important result that semipositive metrics can be described via uniform limits of tropical Fubini--Study metrics. 
We note that there is a more general statement for singular plurisubharmonic metrics, which we will not need. 

\begin{theorem}\label{thm:eqiuvalentcomplexHermitianline}
Let $X(\mC)$ be a projective complex variety, let $L$ be a semiample line bundle on $X$, and let $\phi$ be a continuous metric on $L$. 
Then, the following are equivalent:
\begin{enumerate}
\item $\phi$ is a continuous semipositive metric,
\item $\phi$ is a decreasing limit of a net of tropical Fubini--Study metrics,
\item $\phi$ is a uniform limit of a net of tropical Fubini--Study metrics. 
\end{enumerate}
\end{theorem}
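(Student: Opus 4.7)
The plan is to prove the cyclic chain of implications $(2) \Rightarrow (3) \Rightarrow (1) \Rightarrow (2)$, leveraging compactness of $X(\mC)$ throughout and citing the relevant regularization theorem for the hardest step.

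First, for $(2) \Rightarrow (3)$, I would invoke Dini's theorem. Fix a reference smooth semipositive metric $\phi_0$ on $L$ (which exists by Demailly since $L$ is semiample on the projective variety $X(\mC)$), and write each tropical Fubini--Study metric $\phi_i$ in the net additively so that $\phi_i - \phi_0$ is a continuous function on the compact space $X(\mC)$. By hypothesis, $\phi_i - \phi_0 \searrow \phi - \phi_0$ pointwise, with both $\phi_i - \phi_0$ and $\phi - \phi_0$ continuous. Dini's theorem, applicable since $X(\mC)$ is compact by projectivity, upgrades this pointwise monotone convergence to uniform convergence on $X(\mC)$, which gives $(3)$.

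Next, for $(3) \Rightarrow (1)$, I would first observe that any tropical Fubini--Study metric is semipositive: locally on an open $U$ where a rational section $s$ of $L$ is holomorphic and non-vanishing, the function $-\log\|s\|_{\phi_i} = m^{-1}\max_{j\in J}(\log |s_j/s^{\otimes m}| + \lambda_j)$ is a maximum of finitely many plurisubharmonic functions, hence plurisubharmonic. Equivalently, $c_1(L,\phi_i) \geq 0$ as a current. The uniform limit of tropical Fubini--Study metrics then yields a continuous metric $\phi$ whose local potentials $-\log\|s\|_{\phi}$ are uniform limits of continuous plurisubharmonic functions; since plurisubharmonicity is preserved under uniform (and even decreasing or locally bounded $L^1_{\mathrm{loc}}$) limits, $c_1(L,\phi) \geq 0$, so $\phi$ is continuous semipositive.

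Finally, for $(1) \Rightarrow (2)$, which I expect to be the main obstacle, I would appeal directly to the semiample variant of Demailly's regularization theorem. Concretely, \cite[Theorem 7.1]{BoucksomEriksson:SpaceNorms} (or its predecessors in Demailly's work when $L$ is ample and $X$ smooth) asserts that any continuous semipositive metric on a semiample line bundle over a projective complex variety can be written as the decreasing limit of a net of tropical Fubini--Study metrics. The subtlety here is that $X(\mC)$ may be singular and $L$ merely semiample rather than ample, so classical Demailly regularization does not apply verbatim; this is precisely the scenario the Boucksom--Eriksson result is designed to handle, by working with the Fubini--Study envelope in the space of plurisubharmonic functions and showing it coincides with $\phi$ when $\phi$ is itself continuous semipositive. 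With this cited, $(2)$ follows, closing the cycle.
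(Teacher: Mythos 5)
Your proof is correct, but it decomposes the equivalence differently from the paper. The paper establishes $(1) \Leftrightarrow (2)$ in one shot by citing \cite[Theorem 2.11]{PilleSchneider:Global}, then handles $(2) \Leftrightarrow (3)$ by Dini's theorem (for the forward direction) and an elementary perturbation argument (for the reverse). You instead run the cycle $(2) \Rightarrow (3) \Rightarrow (1) \Rightarrow (2)$: the step $(2) \Rightarrow (3)$ is identical (Dini on the compact space $X(\mC)$), but you prove $(3) \Rightarrow (1)$ explicitly by observing that a tropical Fubini--Study metric has plurisubharmonic local potentials and that plurisubharmonicity passes to locally uniform limits, and then close the loop with $(1) \Rightarrow (2)$ via \cite[Theorem 7.1]{BoucksomEriksson:SpaceNorms} rather than the Pille-Schneider packaging. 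Both citations are acknowledged as sources for this regularization statement in the paragraph preceding the theorem, so the substitution is harmless. Your route has the pedagogical advantage of making the easy implication $(3)\Rightarrow(1)$ explicit rather than leaving it implicit in the cited equivalence; the paper's route is more economical because Pille-Schneider's Theorem 2.11 already delivers the bidirectional $(1)\Leftrightarrow(2)$. One small inefficiency in your write-up: in the Dini step you insist on a \emph{semipositive} smooth reference metric $\phi_0$, but any continuous reference metric on $L$ (which exists automatically, no regularization needed) suffices to reduce to real-valued functions, so the appeal to Demailly there is superfluous.
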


\begin{proof}
The equivalence of the first and second statement follows from \cite[Theorem 2.11]{PilleSchneider:Global}. 
That $(2) \Rightarrow (3)$ follows from Dini's theorem, and the reverse implication is elementary.
\end{proof}

For the remainder of the section, let $\cX$ be a projective arithmetic variety. 

\begin{definition}
A \cdef{Hermitian line bundle} on $\cX$ is a pair $\overline{\cL} = (\cL,\nrm{\cdot})$ where $\cL$ is a line bundle on $\cX$ and $\nrm{\cdot}$ is a continuous metric of $\cL(\mC)$ on $\cX(\mC)$, which is invariant under the action of complex conjugation. 
\end{definition}

In order to establish an equivalence between certain subcategories of adelic line bundles and metrized line bundles on Berkovich analytic spaces, we will need to specify the type of metric on $\cL(\mC)$.  
We do so by imposing a global constraint on the line bundle which induces an Archimedean condition on the metric.

\begin{definition}\label{defn:GTFSH}
A \cdef{global tropical Fubini--Study} Hermitian line bundle on $\cX$ is a Hermitian line bundle $\overline{\cL} = (\cL,\phi)$ where $\cL$ is a semiample line bundle on $\cX$ and 
\[
\phi = m^{-1}\max_{j\in J} (\log |s_{j}\otimes 1_{\mC}| + \lambda_{j})
\]
where $(s_{j})_{j\in J}$ are global sections of $H^0(\cX,m\cL)$ which do not have common zeros, $\lambda_{j} \in \mR$, and $\phi$ is invariant under the action of complex conjugation. 
If the constants $\lambda_j$ are zero, we say that a global tropical Fubini--Study Hermitian line bundle is \cdef{pure}.  
We call the associated metric $\phi$ on $\cL(\mC)$ a \cdef{global tropical Fubini--Study metric}.

A \cdef{global semipositive} Hermitian line bundle on $\cX$ is a Hermitian line bundle $\overline{\cL} = (\cL,\phi)$ where $\cL$ is a semiample line bundle on $\cX$ and $\phi$ is a semipositive metric of $\cL(\mC)$ on $\cX(\mC)$ which can be written as the limit of a decreasing net of global tropical Fubini--Study metrics and is invariant under the action of complex conjugation. 
\end{definition}

\begin{remark}
The difference between a tropical Fubini--Study (\autoref{defn:Hermitian_TFS}) and global tropical Fubini--Study Hermitian line bundle (\autoref{defn:GTFSH}) lies in the conditions concerning the set of common zeros of the sections defining them.  
By definitions, it is clear that a global tropical Fubini--Study Hermitian line bundle is a tropical Fubini--Study Hermitian line bundle, but the converse need not be true.  
The phrase ``global'' in our terminology is meant to reflect the condition that the metric at the Archimedean place is induced by a global condition on the line bundle over the arithmetic variety. 
\end{remark}

%

\begin{notation}
As above, let $\cX$ be a projective arithmetic variety, and let $\cL$ be a line bundle on $\cX$. 
We introduce the following notation for the groups and categories we have just introduced. 
Namely, let  
\begin{align*}
\acPic(\cX) &\coloneqq \text{ category of Hermitian line bundles on $\cX$}\\
\acPic(\cX)_{\gtFS} &\coloneqq \text{ category of global tropical Fubini--Study Hermitian line bundles on $\cX$} \\
\acPic(\cX)_{\gsemip} &\coloneqq \text{ category of global semipositive Hermitian line bundles on $\cX$}. 
\end{align*}
where the objects are the corresponding Hermitian line bundles, and every morphism is an isometry, namely a metric preserving isomorphism. 
We will also use this notation without the underline to simply denote the corresponding group of such line bundles. 
\end{notation}

\subsection{Arithmetic divisors on projective arithmetic varieties}
\label{subsec:arithmeticdivisors}
Next, we recall arithmetic divisors on projective arithmetic varieties. 

\begin{definition}
Let $D(\mC)$ be an analytic Cartier divisor on $X(\mC)$ with support $|D|$. 
A \cdef{continuous Green's function} $g$ of $D(\mC)$ on $X(\mC)$ is a function $g\colon X(\mC)\setminus |D| \to \mR$ such that for any meromorphic function $f$ on an open subset $U$ of $X(\mC)$ satisfying $\ddiv(f) = D_{|U}$, the function $g + \log |f|$ can be extended to a continuous function on $U$. 
\end{definition}

The pair $(D(\mC),g)$ defines a metrized line bundle $(\cO(D),\nrm{\cdot}_g)$ with the metric defined by $\nrm{s_D}_g = e^{-g}$ where $s_D$ is a section of $\cO(D)$ corresponding to the meromorphic function $1$ on $X(\mC).$
By this correspondence, we have that $g$ is continuous if and only if $\nrm{\cdot}_g$ is continuous. 
We can also make the following definition using this relationship. 

\begin{definition}
For $D(\mC)$ an analytic Cartier divisor on $X(\mC)$ with Green's function $g$, we say that $g$ is \cdef{tropical Fubini--Study} (resp.~\cdef{semipositive}) if $\nrm{\cdot}_g$ is a tropical Fubini--Study (resp.~semipositive) metric. 
\end{definition}

Using this, we can define arithmetic divisors on projective arithmetic varieties. 

\begin{definition}\label{defn:arithmeticdivisor}
Let $\cX$ be a projective arithmetic variety. 
An \cdef{arithmetic divisor} on $\cX$ is a pair $\overline{\cD} = (\cD,g_{\cD})$ where $\cD$ is a Cartier divisor on $\cX$ and $g_{\cD}$ is a Green's function on $\cD(\mC)$ i.e., $g_{\cD}$ is a continuous, real-valued, conjugate-invariant function on $(\cX\setminus \cD)(\mC)$ with respect to the complex analytic topology such that if $\cD$ is locally defined by $f$, then $g_{\cD} + \log |f|$ extends to a continuous function on $\cD(\mC)$. 
An arithmetic divisor $(\cD,g_{\cD})$ is called \cdef{effective} (resp.~\cdef{strictly effective}) if $\cD$ is effective and $g_{\cD}\geq 0$ (resp.~$g_{\cD}>0$).

A \cdef{principal arithmetic divisor} on $\cX$ is an arithmetic divisor of the form
\[
\widehat{\ddiv}(f) \coloneqq (\ddiv(f),-\log |f|)
\]
for any rational function $f\in \mQ(\cX)^{\times}$ on $\cX$. 
\end{definition}

As in the complex analytic setting, we can define certain classes of arithmetic divisors. 
Note that the pair $(\cD(\mC),g_{\cD})$ defines a Hermitian line bundle $(\cO(\cD),\nrm{\cdot}_{g_{\cD}})$ with the metric defined by $\nrm{s_{\cD}}_{g_{\cD}} = e^{-{g_{\cD}}}$ where $s_{\cD}$ is a section of $\cO(\cD)$ corresponding to the meromorphic function $1$ on $\cX.$
By this correspondence, we have that $g_{\cD}$ is continuous if and only if $\nrm{\cdot}_{g_{\cD}}$ is continuous. 
Similar to the above, we can also make the following definition using this relationship. 

\begin{definition}
For $\cD$ a Cartier divisor on $\cX$ with Green's function $g_{\cD}$, we say that $\overline{D} = (\cD,g_{\cD})$ or just $g_{\cD}$ is \cdef{global tropical Fubini--Study} if $\nrm{\cdot}_g$ is a global tropical Fubini--Study metric. 
Similarly, we say that $g_{\cD}$ is \cdef{global semipositive} if $\nrm{\cdot}_g$ is a global semipositive metric. 
We will adopt similar naming conventions for arithmetic divisors $(\cD,g_{\cD})$ when the Green's function $g_{\cD}$ possess those properties. 
\end{definition}

\begin{notation}
We introduce the following notation for the groups we have just introduced. 
As above, let $\cX$ be a projective arithmetic variety.
\begin{align*}
\aDiv(\cX) &\coloneqq \text{group of arithmetic divisors on $\cX$},\\
\aPr(\cX) &\coloneqq \text{group of principal arithmetic divisors on $\cX$},\\
\aDiv(\cX)_{\gtFS} &\coloneqq \text{group of global tropical Fubini--Study arithmetic divisors on $\cX$},\\
\aDiv(\cX)_{\gsemip} &\coloneqq \text{group of global semipositive arithmetic divisors on $\cX$}.
\end{align*}

Using these notation, we can also form the corresponding \cdef{arithmetic divisor class group of $\cX$} via the quotient
\begin{align*}
\aCaCl(\cX) &\coloneqq \aDiv(\cX)/\aPr(\cX).  
\end{align*}
\end{notation}
Recall that as in the classical setting, we have the isomorphism
\[
\aCaCl(\cX)\cong \aPic(\cX)
\]
given by sending $\overline{\cD} = (\cD,g_{\cD})$ to $(\cO(\cD),\nrm{\cdot}_{g_{\cD}})$ and the inverse image of a Hermitian line bundle $\overline{\cL} = (\cL,\nrm{\cdot})$ is represented by 
\[
\widehat{\ddiv}(s) \coloneqq (\ddiv(s),-\log\nrm{s}). 
\]

\subsection{Analytification of Hermitian line bundles}
To conclude, we will recall the analytification map of Yuan and Zhang from $\aPic(\cX)$ to $\aPic(\cX^{\an})$ when $\cX$ is a projective arithmetic variety. 
Additionally, we will introduce a new analytification map and illustrate when it coincides with Yuan and Zhang's.

\subsubsection{The Yuan--Zhang analytification in projective setting}
Let $\cX$ be a projective arithmetic variety. 
Yuan and Zhang define a canonical map
\[
\aPic(\cX) \to \aPic(\cX^{\an})_{\eqv}
\]
using model metrics. 
More precisely, let $\overline{\cL} = (\cL,\nrm{\cdot})$ denote a Hermitian line bundle. 
We need to define a metric of $\cL^{\an}$ over $\cX^{\an}$.

First, we can define a metric on the fiber of $\cL^{\an}$ over $\cX_{\sH_{x_{\infty,1}}}^{\an}$ using the original Hermitian metric (as it is invariant under complex conjugation), and then we extend it to the fiber $\cL^{\an}$ over $\cX_{\sH_{x_{\infty,i}}}^{\an}$ for $i \in (0,1]_{\infty}$ by norm-equivalence. 
Over the non-Archimedean points $x\in \cX^{\an}$, let $\phi_x^{\circ}\colon \Spec(R_x) \to \cX$ denote the $\mZ$-morphism extending the $\mZ$-morphism $\phi_x\colon \Spec(\sH_x) \to \cX$ by valuative criterion of properness. 
Then, $(\phi_x^{\circ})^*\cL$ is a free module over $R_x$ of rank 1, and if $s_x$ is a basis for this free module, then Yuan--Zhang declare that the metric $
\nrm{\cdot}_x$ of $\cL_x = \phi_x^{*}\cL$ is defined by setting $\nrm{s_x}_x = 1$. In other words, over each non-Archimedean point, Yuan and Zhang define the metric on $\cL_x$ by considering the \cdef{model metric} induced by $(\cX_{R_x},(\phi_x^{\circ})^*\cL)$. We refer the reader to \cite[Section 2.7]{BoucksomJonsson:SingularPSH} for more details on model metrics. 

To establish notation, denote the above analytification of $\overline{\cL} = (\cL,\nrm{\cdot})$ by $\overline{\cL}^{\an} \coloneqq (\cL^{\an},\nrm{\cdot}^{\an}_{\YZ})$. 
We will also adopt this notation in the additive setting. 

\subsubsection{Another analytification map for global tropical Fubini--Study metrics}
\label{subsubsec:analytificationnew}
We now introduce another analytification map for Hermitian line bundles equipped with a global tropical Fubini--Study metric. 
In other words, we will define a map
\[
\aPic(\cX)_{\gtFS} \to \aPic(\cX^{\an})_{\eqv, \tFS}. 
\]

Let $\overline{\cL} = (\cL,\phi)$ denote a global tropical Fubini--Study Hermitian line bundle; recall that this means that $\phi$ is metric on $\cL(\mC)$ of the form
\[
\phi = m^{-1}\max_{j\in J} (\log |s_{j}\otimes 1_{\mC}| + \lambda_{j})
\]
where $(s_{j})_{j\in J}$ are finitely many global sections of $H^0(\cX,m\cL)$ which do not have common zeros, $\lambda_{j} \in \mR$, and $\phi$ is invariant under the action of complex conjugation. 
We define $\overline{\cL}^{\an} = (\cL^{\an},\phi^{\an})$ where 
\[
\phi^{\an}\coloneqq m^{-1}\max_{j\in J} (\log |s_{j}| + \lambda_{j})
\]
where $(s_{j})_{j\in J}$ and $\lambda_{j} \in \mR$ are as above. 
Furthermore, it is clear that $\phi^{\an}$ is a tropical Fubini--Study metric on $\cL^{\an}$ (\autoref{defn:tropicalFS}).

In fact, we can easily define an inverse, and hence we have the following result. 

\begin{lemma}\label{lemma:equivGTFS}
Let $\cX$ denote a projective arithmetic variety. 
There is an equivalence of categories
\[
\acPic(\cX)_{\gtFS} \cong \acPic(\cX^{\an})_{\eqv, \tFS}. 
\]
\end{lemma}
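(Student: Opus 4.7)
The plan is to exhibit an explicit quasi-inverse to the analytification functor $\overline{\cL} \mapsto \overline{\cL}^{\an}$ constructed in Subsection \ref{subsubsec:analytificationnew}. The key observation is that both categories are parametrized by the same underlying algebraic data, namely a finite collection of sections of $m\cL$ with no common zero together with real constants $\lambda_j$, so the correspondence should reduce to bookkeeping once the dictionary between the Archimedean and Berkovich presentations is set up.

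Concretely, given $(\cL^{\an},\psi) \in \acPic(\cX^{\an})_{\eqv,\tFS}$, I would fix a presentation $\psi = m^{-1}\max_{j\in J}(\log|s_j| + \lambda_j)$ where the $s_j \in H^0(\cX,m\cL)$ are algebraic by virtue of \autoref{defn:tropicalFS}, and assign to it the Hermitian metric
\[
\phi \coloneqq m^{-1}\max_{j\in J}(\log|s_j \otimes 1_{\mC}| + \lambda_j)
\]
on $\cL(\mC)$. This yields a global tropical Fubini--Study Hermitian line bundle in the sense of \autoref{defn:GTFSH}; conjugation-invariance of $\phi$ follows from the norm-equivariance of $\psi$ together with the identification $\pi^{-1}(x_{\infty,1}) \cong \cX(\mC)/\Gal(\mC/\mR)$ from \autoref{exam:BerkovichoverZ}, which already quotients by the action of complex conjugation. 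The two assignments are mutually quasi-inverse on objects essentially by construction, since each preserves the presenting pair $(\{s_j\},\{\lambda_j\})$.

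For morphisms, I would check that an $\cO_\cX$-linear isomorphism $f\colon \cL_1 \to \cL_2$ is an isometry for the global TFS Hermitian structures on the $\cL_i(\mC)$ if and only if its analytification $f^{\an}$ is an isometry on $\cX^{\an}$. The reverse implication is immediate by restriction to the Archimedean fiber $\cX(\mC)$; the forward implication uses that $f$ transports any defining presentation of the source metric to a defining presentation of the target metric, and that the analytic TFS formula is then evaluated at each point of $\cX^{\an}$ using the local absolute value alone. I expect the main technical nuisance to be keeping the conjugation-invariance on $\cX(\mC)$ consistent with norm-equivariance across the Archimedean segment of $\sM(\mZ)$; once this is verified via \autoref{exam:BerkovichoverZ}, the equivalence of categories follows.
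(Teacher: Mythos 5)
Your proposal is essentially the paper's argument: exhibit a quasi-inverse to the analytification of Subsection~\ref{subsubsec:analytificationnew} by restricting to the Archimedean fiber and pulling back to $\cX(\mC)$, which the paper writes intrinsically as $(\cL, F_\iota^* F_{x_{\infty,1}}^*\phi)$ while you write it through a chosen presentation $(\{s_j\},\{\lambda_j\})$. Two small points worth fixing: (i) as written your inverse appears to depend on the chosen presentation, so you should note that the resulting $\phi$ is exactly the Archimedean restriction of $\psi$ and therefore intrinsic --- the paper's formulation sidesteps this by never choosing a presentation; (ii) conjugation-invariance of $\phi$ does not follow from norm-equivariance of $\psi$, which only governs the ray $\{x_{\infty,\varepsilon}\}_{\varepsilon\in(0,1]}$. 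It holds simply because the $s_j$ are algebraic sections defined over $\mZ$, so $|s_j\otimes 1_\mC|$ is invariant under complex conjugation on $\cX(\mC)$ --- equivalently, pulling a metric back along $\cX(\mC)\to\cX(\mC)/\Gal(\mC/\mR)$ always yields a conjugation-invariant metric, independent of any property of $\psi$. Your morphism-level verification is more detailed than the paper's ``clearly isometries are sent to isometries'' and is correct.
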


\begin{proof}
By the above, we have a map $\aPic(\cX)_{\gtFS} \to \aPic(\cX^{\an})_{\eqv, \tFS}$. 
Conversely, given an element $\overline{\cL}^{\an} = (\cL^{\an},\phi)$, we define an element of $\aPic(\cX)_{\gtFS}$ by setting $(\cL,F_{\iota}^*F_{x_{\infty,1}}^*\phi)$ where $\iota\colon (\mR,|\cdot|) \to (\mC,|\cdot|)$ is the bounded homomorphism of Banach rings defined by $a\mapsto a$, $F_{\iota}^*\colon \cX(\mC) \to \cX_{\sH_{x_{\infty,1}}}^{\an}\cong \cX_{\mR}^{\an}$ is pullback map from Subsection \ref{subsec:basechange}, and $F_{x_{\infty,1}}^*$ is the pullback map form \autoref{example:base_change_metrics}. 
Clearly, these maps are inverses, and it is also clear that isometries are sent to isometries so the claim follows. 
\end{proof}

%
%

Next, we show that our new analytification agrees with that of Yuan and Zhang in the setting of pure tropical Fubini--Study Hermitian line bundles. 
Recall that a pure tropical Fubini--Study metric is one where the real constants $\lambda_{j}$ are all zero. 

This result essentially follows from the following lemma. 

\begin{lemma}\label{lemma:modelmetrics}
Let $K$ be a complete non-Archimedean valued field (possibly trivially valued), let $X$ be a projective $K$-variety, and let $L$ be a semiample line bundle on $X$.

A pure tropical Fubini--Study metric $\phi$ on $L$ (i.e., a metric of the form $\phi \coloneqq m^{-1}\max (\log |s_{j}|)$ where $(s_{j})_{j\in J}$ is a finite collection of global sections of $H^0(X,mL)$ which do not have common zeros) is the same thing as a model metric defined by a semiample model of $L$. 
\end{lemma}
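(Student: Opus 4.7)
The plan is to prove the equivalence by an explicit dictionary: a finite tuple of sections of $mL$ with no common zeros corresponds on one side to the data defining a pure tropical Fubini--Study metric, and on the other side (via the induced morphism to projective space) to the data of a semiample model. The core of the argument is a pointwise computation at each $x \in X^{\an}$, where the valuative criterion of properness uniquely extends the structure morphism $\Spec \sH_x \to X$ to a morphism $\varphi_x \colon \Spec R_x \to \cX$ into the model.

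For the direction \emph{semiample model implies pure tropical Fubini--Study metric}, choose $m \geq 1$ so that $m\cL$ is a line bundle globally generated by sections $\tau_0, \dots, \tau_N \in H^0(\cX, m\cL)$, and set $s_j \coloneqq \tau_{j|X} \in H^0(X, mL)$; these have no common zero. At a point $x \in X^{\an}$, the pullback $\varphi_x^{*}(m\cL)$ is a free rank-one $R_x$-module generated by the $\varphi_x^{*}(\tau_j)$, so at least one $\varphi_x^{*}(\tau_{j_0})$ is an $R_x$-basis. Writing $\tau_j = f_j \cdot \tau_{j_0}$ with $f_j \in R_x$ and $|f_{j_0}|_x = 1$, the maximum in $m^{-1}\max_j \log|s_j|$ is attained at $j_0$ with value corresponding to the model-metric normalization $\|\tau_{j_0}(x)\|_x = 1$, and one then checks pointwise that $m^{-1}\max_j \log|s_j|$ coincides with the model metric $\phi_{\cL}$.

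For the converse direction, the sections $s_0, \dots, s_N$ defining $\phi$ yield a morphism $f \colon X \to \PP^N_K$ with $f^{*}\cO(1) \cong mL$. Fix any projective model $\cX_0$ of $X$ over $K^{\circ}$, and let $\cX'$ denote the scheme-theoretic closure of the locally closed immersion $(\mathrm{id}, f) \colon X \hookrightarrow \cX_0 \times_{K^{\circ}} \PP^N_{K^{\circ}}$. Then $\cX'$ is projective and flat over $K^{\circ}$ (flatness from torsion-freeness over a valuation ring) with generic fiber $X$, hence a projective model of $X$. The line bundle $\cL' \coloneqq \mathrm{pr}_2^{*}\cO(1)_{|\cX'}$ is globally generated and restricts to $mL$; setting $\cL \coloneqq m^{-1}\cL' \in \Pic(\cX')_{\mQ}$ then produces a semiample model of $L$. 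Since the $s_j$ extend to the generating sections of $\cL'$ (the pullbacks of the homogeneous coordinates), the first direction applied to $(\cX', \cL)$ yields $\phi_{\cL} = \phi$. In the trivially valued case $K^{\circ} = K$, the construction degenerates to the tautological choice $\cX' = X$.

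The main obstacle I expect is bookkeeping rather than substance: producing an auxiliary projective model $\cX_0$ so one can form the graph closure, verifying flatness of $\cX'$ over $K^{\circ}$, and tracking the $\mathbb{Q}$-line bundle structure throughout, since $\cL$ is only an honest line bundle after clearing the denominator $m$. Once these are in place, the pointwise identification of $m^{-1}\max_j \log|s_j|$ with $\phi_{\cL}$ via the basis $\tau_{j_0}$ is essentially immediate.
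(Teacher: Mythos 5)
Your proof is correct, but it takes a genuinely different route from the paper's. The paper simply cites \cite[Proposition 2.13.(ii) \& Lemma 2.14]{BoucksomJonsson:SingularPSH} for the non-trivially and trivially valued cases respectively, whereas you give a direct, self-contained argument that treats both cases uniformly. In the forward direction, your key observation is that when the $\varphi_x^*(\tau_j)$ generate the free rank-one $R_x$-module $\varphi_x^*(m\cL)$, one of them must be a unit multiple of a basis (since $R_x$ is a valuation ring), which pins down the model norm and identifies it with the sup over $|s_j(x)|$; this is exactly the local computation underlying the cited references. In the converse direction, the graph-closure construction $\cX' \subset \cX_0 \times_{K^\circ} \PP^N_{K^\circ}$ together with $\cL' = \pr_2^*\cO(1)|_{\cX'}$ is the standard way to produce a semiample (in fact globally generated) model from a base-point-free linear system, and your flatness remark (torsion-free over a valuation ring) is what makes it a bona fide model. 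What the paper's approach buys is brevity and delegation of the valuation-ring bookkeeping to the reference; what your approach buys is a single argument covering the trivially valued case (where $\cX' = X$ and the model metric is the canonical one) without invoking a separate lemma, and a transparent view of exactly where the no-common-zeros hypothesis enters. One small point you could make more explicit: the isomorphism $f^*\cO(1) \cong mL$ is determined canonically by requiring $f^*X_j \mapsto s_j$, which removes any scalar ambiguity between the FS metric defined by the $s_j$ and the model metric of $\cL'$.
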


\begin{proof}
This follows from \cite[Proposition 2.13.(ii) \& Lemma 2.14]{BoucksomJonsson:SingularPSH}, in the non-trivial and trivially valued setting, respectively. 
\end{proof}

\begin{lemma}
Let $\cX$ be a projective arithmetic variety, and let $\overline{\cL} = (\cL,\phi)$ be global tropical Fubini--Study metric which is pure. 
Then, $\phi^{\an}_{\YZ} = \phi^{\an}$. 
\end{lemma}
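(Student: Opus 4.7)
My plan is to verify the equality fiber-by-fiber along the decomposition $\cX^{\an} = \bigsqcup_{x \in \sM(\mZ)} \cX^{\an}_{\sH_x}$ from \autoref{exam:BerkovichoverZ}. Both $\phi^{\an}$ and $\phi^{\an}_{\YZ}$ are by construction norm-equivariant: $\phi^{\an}$ because it is a tropical Fubini--Study metric on $\cL^{\an}$, and $\phi^{\an}_{\YZ}$ by Yuan and Zhang's explicit definition (extension from $x_{\infty,1}$ by norm-equivalence at the Archimedean place, model metrics at the non-Archimedean ones). Hence it suffices to check the equality on a fiber over each representative $x \in \sM(\mZ)$ and extend by norm-equivariance.

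For the Archimedean fiber over $x_{\infty,1}$: the base-change map identifies $\cX^{\an}_{\sH_{x_{\infty,1}}}$ with $\cX(\mC)/\Gal(\mC/\mR)$, and under this identification $|s_j|$ (computed in the Berkovich sense) restricts to $|s_j \otimes 1_\mC|$. Thus $\phi^{\an} = m^{-1}\max_j \log|s_j|$ restricts to $\phi = m^{-1}\max_j \log|s_j\otimes 1_\mC|$ on the Archimedean fiber. On the other hand, $\phi^{\an}_{\YZ}$ is defined to be $\phi$ at $x_{\infty,1}$. Thus the two metrics agree on the Archimedean fiber.

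For a non-Archimedean point $x \in \sM(\mZ)$: the global sections $(s_j)_{j\in J}$ of $H^0(\cX, m\cL)$ have no common zeros on $\cX$, and hence induce a $\mZ$-morphism $\sigma\colon \cX \to \mP^{|J|-1}_{\mZ}$ with $\sigma^* \cO(1) \cong m\cL$. This exhibits $(\cX, m\cL)$ as a \emph{semiample integral model} of $(\cX_{\sH_x}, m\cL_{\sH_x})$ over $\Spec(R_x)$ via the extension $(\phi_x^{\circ})^*\cL$. Applying \autoref{lemma:modelmetrics} to this semiample model, the pure tropical Fubini--Study metric $m^{-1}\max_j \log|s_j|$ on $\cL^{\an}_{\sH_x}$ coincides with the model metric associated to $(\cX_{R_x}, (\phi_x^{\circ})^*\cL)$. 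But by the very definition of $\phi^{\an}_{\YZ}$, this model metric is exactly $\phi^{\an}_{\YZ}$ on the fiber over $x$.

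Combining the two cases gives $\phi^{\an}_{\YZ} = \phi^{\an}$. The only real subtlety is the verification in the non-Archimedean case that the semiample model produced by the $s_j$'s is precisely the one used by Yuan--Zhang, which is essentially a bookkeeping unpacking of the two definitions via \autoref{lemma:modelmetrics}; the purity hypothesis is essential since the additive constants $\lambda_j$ would otherwise not be absorbable into a model metric over $\Spec(\mZ)$.
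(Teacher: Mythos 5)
Your proof is correct and takes essentially the same approach as the paper's proof, which tersely cites the analytification constructions, \autoref{lemma:modelmetrics}, and the fact that pullbacks of semiample line bundles remain semiample (Fujita). Your fiber-by-fiber unpacking, with the Archimedean case by direct comparison and the non-Archimedean case via \autoref{lemma:modelmetrics} applied to the base-changed semiample model, is exactly the argument the paper's citations compress.
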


\begin{proof}
This follows from the analytification maps constructed above, \autoref{lemma:modelmetrics}, and the fact that the pullback of a semiample line bundle is again semiample (cf.~\cite[Proposition 1.2]{Fujita:Semipositive}). 
\end{proof}

\section{Adelic divisors and adelic line bundles}
\label{sec:adelicdivisorsandlinebundles}
In this section, we briefly review Yuan and Zhang's definitions of adelic divisors and adelic line bundles on quasi-projective arithmetic varieties and recall their theory of compactified divisors and compactified line bundles.

For the beginning of this section, let $\cU$ denote a quasi-projective arithmetic variety. 
\subsection{Adelic divisors on quasi-projective arithmetic varieties}
Let $\cX$ be a projective model of $\cU$ i.e., there exists an open $\mZ$-immersion $\cU \hookrightarrow \cX$ with $\cX$ being a projective arithmetic variety. 
The first step in defining adelic divisors is to introduce a group of objects with mixed coefficients. 
We refer the reader to \cite[Section 2.3.1]{YuanZhang:AdelicLineBundles} for the definition of arithmetic $\mQ$-divisors $\aDiv(\cX)_{\mQ}$ on projective arithmetic varieties, which we will freely use. 
We remind the reader of the definition of integral arithmetic divisors from \autoref{defn:arithmeticdivisor}. 

\begin{definition}
The \cdef{group of arithmetic $(\mQ,\mZ)$-divisors of $(\cX,\cU)$} is defined as the fibered product
\[
\aDiv(\cX,\cU) \coloneqq \aDiv(\cX)_{\mQ} \times_{\Div(\cU)_{\mQ}} \Div(\cU).
\]
In other words, a $(\mQ,\mZ)$-divisor of $(\cX,\cU)$ is a pair $\underline{\cD} = (\overline{\cD},\cD')$ where $\overline{\cD} \in \aDiv(\cX)_{\mQ}$ and $\cD' \in \Div(\cU)$ such that their image in $\Div(\cU)_{\mQ}$ is the same under the restriction and inclusion maps, respectively. 
We refer to $\overline{\cD}$ as the \cdef{rational part} and $\cD'$ as the \cdef{integral part} of $\underline{\cD}$. 
\end{definition}

Using the notion of effective arithmetic divisors, we can define a partial ordering on $\aDiv(\cX,\cU)$. In particular given two $(\mQ,\mZ)$-divisors of $(\cX,\cU)$, namely $\underline{\cD} = (\overline{\cD},\cD')$ and $\underline{\cE} = (\overline{\cE},\cE')$, we say that $\underline{\cD} \geq \underline{\cE}$ or $\underline{\cE} \leq \underline{\cD}$ in $\aDiv(\cX,\cU)$ if the image of $\overline{\cD}  - \overline{\cE} \in \aDiv(\cX)_{\mQ}$ and the image of $\cD' - \cE' \in \Div(\cU)$ are both effective.

Recall that the collection of projective models $\cX$ of $\cU$ is an inverse system. 
Using pullback morphisms, we can collect all of the arithmetic $(\mQ,\mZ)$-divisors as we vary over all projective models of $\cU$ into a group of so-called model divisors. 

\begin{definition}
The \cdef{group of model divisors} of $\cU$ is defined to be
\[
\aDiv(\cU)_{\model} \coloneqq \varinjlim_{\cX} \aDiv(\cX,\cU)
\]
where $\cX$ ranges over all projective models of $\cU$, and the morphisms are pullback morphisms. 
By the direct limit, we have an induced partial ordering on $\aDiv(\cU)_{\model}$. 
\end{definition}

Roughly speaking, the group of adelic divisors on $\cU$ comes from taking the completion of the group of model divisors with respect to a topology which is induced from a boundary divisor, which we now recall. 
We remind the reader of the conventions concerning arithmetic divisors on projective arithmetic varieties from Subsection \ref{subsec:arithmeticdivisors}. 

\begin{definition}\label{defn:boundarydivisor}
A \cdef{boundary divisor of $\cU$} is a pair $(\cX_0,\overline{\cD}_0)$ consisting of a projective $\mZ$-model $\cX_0$ of $\cU$ and a strictly effective arithmetic divisor $\overline{\cD}_0$ on $\cX_0$ such that the support of $\cD_0$ is equal to $\cX_0\setminus \cU$. 
Note that for any $r\in \mQ$, we may view $r\overline{\cD}_0$ as an element of $\aDiv(\cX_0,\cU)$ by declaring $r\underline{\cD} = (r\overline{\cD},0)$, and hence $r\underline{\cD}$ may also be viewed as an element in $\aDiv(\cU)_{\model}$. 
\end{definition}

\begin{definition}
Let $\underline{\cE} \in \aDiv(\cU)_{\model}$, and let $(\cX_0,\overline{\cD}_0)$ be a boundary divisor for $\cU$. 
The \cdef{boundary norm} $\nrm{\cdot}_{\overline{\cD}_0}$ is defined as
\[
\nrm{\underline{\cE}}_{\overline{\cD}_0} = \inf \{ \varepsilon\in \mQ_{>0} : -\varepsilon\underline{\cD}_0 \leq \underline{\cE} \leq \varepsilon\underline{\cD}_0\}.
\]
Note that if the integral part of $\underline{\cE}$ is non-zero, then such an $\varepsilon$ does not exist and we declare $\inf \emptyset = \infty$. 
As such, the boundary norm is an extended norm. 
\end{definition}

The \cdef{boundary topology} on $\aDiv(\cU)_{\model}$ is the topology induced by the boundary norm $\nrm{\cdot}_{\overline{\cD}_0}$, and by \cite[Lemma 2.5.1]{YuanZhang:AdelicLineBundles}, this topology does not depend on the choice of boundary divisor. 

\begin{definition}
The group of \cdef{adelic divisors} on $\cU$, denoted by $\aDiv(\cU)$, the the completion of $\aDiv(\cU)_{\model}$  with respect to the boundary topology. 
As such, an adelic divisor is represented by a Cauchy sequence in $\aDiv(\cU)_{\model}$ i.e., a sequence $\{ \underline{\cE_i}\}$ in $\aDiv(\cU)_{\model}$ such that there is a sequence $\{ \varepsilon_i\}$ of positive rational numbers converging to zero such that
\[
-\varepsilon_i \underline{\cD}_0 \leq \underline{\cE}_j - \underline{\cE}_i \leq \varepsilon_i\underline{\cD}_0
\]
where $j\geq i\geq 1$. 
The \cdef{class group of adelic divisors} of $\cU$ is defined to be
\[
\aCaCl(\cU) \coloneqq \aDiv(\cU)/\aPr(\cU)_{\model}.
\]
\end{definition}

\subsection{Adelic line bundles on quasi-projective arithmetic varieties}
Next, we define adelic line bundles on quasi-projective arithmetic varieties. 
As above, let $\cU$ be a quasi-projective arithmetic variety. 
First, we recall the notion of model adelic divisors for rational maps.

\begin{definition}
Let $\cX_1$, $\cX_2$ be projective models of $\cU$, and let $\overline{\cL}_i$ be a Hermitian $\mQ$-line bundle on $\cX_i$. 
A \cdef{rational map} $\ell\colon \overline{\cL}_1 \dashrightarrow \overline{\cL}_2$ over $\cU$ is an isomorphism $\ell \colon \cL_{1|\cU} \cong \cL_{2|\cU}$ of $\mQ$-line bundles on $\cU$. 
Let $\cY$ be a projective model of $\cU$, which dominates $\cX_1$ and $\cX_2$ via $\tau_i\colon \cY \to \cX_i$. 
View $\ell$ as a rational section of $\tau_1^*\cL_1^{\vee}\otimes \tau_2^*\cL_2$ on $\cY$ so it defines an arithmetic $\mQ$-divisor $\widehat{\ddiv}_{\cY}(\ell)$ using the metric of $\tau_1^*\overline{\cL}_1^{\vee}\otimes \tau_2^*\overline{\cL}_2$. 
Let $\widehat{\ddiv}(\ell)$ be the image of $\widehat{\ddiv}_{\cY}(\ell)$ in $\aDiv(\cU)_{\model}$ by setting the integral part on $\cU$ to be 0. 
\end{definition}

Let $(\cX_0,\overline{\cD}_0)$ be a boundary divisor, which was defined in \autoref{defn:boundarydivisor}. 

\begin{definition}
We define the objects and morphisms in the \cdef{category of adelic line bundles on $\cU$}. 

The \cdef{objects} in the {category $\acPic(\cU)$ of adelic line bundles on $\cU$} are pairs $(\cL,(\cX_i,\overline{\cL}_i,\ell_i)_{i\geq 1})$ where
\begin{enumerate}
\item $\cL \in \Pic(\cU)$,
\item $\cX_i$ is a projective model of $\cU$,
\item $\overline{\cL}_i $ is an object in $\acPic(\cX_i)_{\mQ}$ i.e., a Hermitian $\mQ$-line bundle on $\cX_i$,
\item $\ell_i\colon \cL \cong \cL_{i|\cU}$ an isomorphism in $\Pic(\cU)_{\mQ}$
\end{enumerate}
which satisfies the following Cauchy condition:~part (4) gives an isomorphism $\ell_i\ell_1^{-1}\colon \cL_{1|\cU}\cong \cL_{i|\cU}$ of $\mQ$-line bundles, and thus a rational map $\ell_i\ell_1^{-1}\colon \overline{\cL}_1 \dashrightarrow \overline{\cL}_i$ over $\cU$ which gives rise to a model divisor $\widehat{\ddiv}(\ell_i\ell_1^{-1})$ in $\aDiv(\cU)_{\model}$, and we require the sequence $\{\widehat{\ddiv}(\ell_i\ell_1^{-1})\}_{i\geq 1}$ to be a Cauchy sequence in $\aDiv(\cU)_{\model}$ under the boundary topology. We will simply denote the objects of $\acPic(\cU)$ by $\aPic(\cU)$. 

The \cdef{morphisms} in {category $\acPic(\cU)$ of adelic line bundles on $\cU$} from an object $(\cL,(\cX_i,\overline{\cL}_i,\ell_i)_{i\geq 1})$ to another object $(\cL',(\cX_i',\overline{\cL}_i',\ell_i')_{i\geq 1})$ is an isomorphism $\iota\colon \cL \to \cL'$ of elements in $\Pic(\cU)$ such that the composition $\ell_i'\iota\ell_i^{-1}\colon \cL_{i|\cU} \to \cL_{i|\cU}'$ induces a rational map $\ell_i'\iota\ell_i^{-1}\colon \overline{\cL}_i \dashrightarrow \overline{\cL}_i'$ which defines a model divisor $\wt{\ddiv}(\ell_i'\iota \ell_i') \in \aDiv(\cU)_{\model}$ whose image in $\Div(\cU)$ is zero. 
Moreover, the sequence 
\[
\{ \widehat{\ddiv}(\ell_i'\iota\ell_i^{-1})\}_{i\geq 1} \in \aDiv(\cU)_{\model}
\]
is required to converge to 0 in the boundary topology i.e., there exists a sequence $\{ \varepsilon_i\}$ of positive rational numbers converging to 0 such that 
\[
-\varepsilon_i \underline{\cD}_0 \leq \widehat{\ddiv}(\ell_i'\iota\ell_i^{-1}) \leq \varepsilon_i\underline{\cD}_0.
\]
\end{definition}

As in the classical case, we have the following result.

\begin{prop}[\protect{\cite[Proposition 2.6.1]{YuanZhang:AdelicLineBundles}}]\label{prop:isomclassadelic}
Let $\cU$ be a quasi-projective arithmetic variety. There is a canonical isomorphism
\[
\aCaCl(\cU) \cong \aPic(\cU).
\]
\end{prop}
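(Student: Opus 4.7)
The plan is to promote the classical isomorphism $\aCaCl(\cX)_{\mQ} \cong \aPic(\cX)_{\mQ}$ for projective arithmetic varieties (recalled at the end of Subsection \ref{subsec:arithmeticdivisors}) in three stages: first to the mixed $(\mQ,\mZ)$-divisor group $\aDiv(\cX,\cU)$; then to the direct limit $\aDiv(\cU)_{\model}$ over all projective models of $\cU$; and finally to the Cauchy completion with respect to the boundary topology. Since both sides of the claimed isomorphism arise as Cauchy completions of functorially related pre-completion objects, the work amounts to checking that the map at the model level is continuous for the boundary topology on both sides, hence passes to the completions.

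For the forward map $\aCaCl(\cU) \to \aPic(\cU)$, I would send a Cauchy sequence $\{\underline{\cE}_i = (\overline{\cE}_i, \cE_i')\}$ to the datum $(\cL, (\cX_i, \overline{\cL}_i, \ell_i))$ defined by setting $\overline{\cL}_i := \cO(\overline{\cE}_i)$ on $\cX_i$ via the classical correspondence, $\cL := \cO(\cE')$ where $\cE'$ is the eventual stable value of $\cE_i'$ (stable because the boundary norm is infinite whenever the integral part is nonzero, so a Cauchy sequence must have eventually constant integral part), and $\ell_i$ provided by the tautological compatibility built into the definition of $\aDiv(\cX_i, \cU)$. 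After replacing pairs $(\cX_i, \cX_1)$ by a common dominating model, one has the identification $\widehat{\ddiv}(\ell_i \ell_1^{-1}) = \underline{\cE}_i - \underline{\cE}_1$ in $\aDiv(\cU)_{\model}$, so the divisor-level Cauchy condition translates directly into the line-bundle-level Cauchy condition.

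Conversely, for $\aPic(\cU) \to \aCaCl(\cU)$, I would choose a nonzero rational section $s$ of $\cL$ on $\cU$ and form the sequence $\underline{\cE}_i := (\widehat{\ddiv}_{\cX_i}(\ell_i(s)), \ddiv_{\cU}(s)) \in \aDiv(\cX_i, \cU) \hookrightarrow \aDiv(\cU)_{\model}$. The identity $\underline{\cE}_j - \underline{\cE}_i = \widehat{\ddiv}(\ell_j \ell_i^{-1})$ on a common dominating model shows $\{\underline{\cE}_i\}$ is an adelic divisor. A different choice $s' = fs$ changes the resulting class by the principal model divisor $\widehat{\ddiv}(f)$, so the class in $\aCaCl(\cU)$ is well-defined, and isometric isomorphisms of adelic line bundles correspond to principal model divisors converging to zero in the boundary topology, matching the equivalence relation on the divisor side.

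The main obstacle will be the careful bookkeeping across dominating models when matching up the two Cauchy conditions: the boundary norm on $\aDiv(\cU)_{\model}$ uses a fixed boundary datum $(\cX_0, \overline{\cD}_0)$, whereas the sequences of arithmetic divisors and line bundles naturally live over the varying models $\cX_i$, and one must verify that passing to a common dominating model of $\cX_i$ and $\cX_0$ preserves the relevant inequalities. This step rests on the independence of the boundary topology from the choice of $(\cX_0, \overline{\cD}_0)$ \cite[Lemma 2.5.1]{YuanZhang:AdelicLineBundles}. Once these identifications are in place, mutual inverseness of the two maps follows from the projective-level isomorphism applied termwise, together with the fact that principal adelic divisors correspond precisely to trivializations of adelic line bundles.
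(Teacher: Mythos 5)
The paper does not give its own proof of this proposition; it simply cites \cite[Proposition~2.6.1]{YuanZhang:AdelicLineBundles}. Your reconstruction is correct and follows exactly the intended argument: you promote the classical $\aCaCl \cong \aPic$ correspondence through the fibered product defining $\aDiv(\cX,\cU)$, then the direct limit $\aDiv(\cU)_{\model}$, then the boundary-topology completion, with the key observations being that the integral part of a Cauchy sequence must be constant (since the boundary norm is $\infty$ when the integral part is nonzero) and that $\widehat{\ddiv}(\ell_i\ell_1^{-1}) = \underline{\cE}_i - \underline{\cE}_1$ in $\aDiv(\cU)_{\model}$ after passing to a dominating model, which translates the two Cauchy conditions into each other.
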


\section{Proof of \autoref{thmx:main0}}
\label{sec:proofs}
In this section, we prove our main result. 

\subsection{Strongly semiample adelic line bundles on quasiprojective varieties}
First, we introduce our new definitions. 
We remind the reader of the definition of a global tropical Fubini--Study metric from \autoref{defn:GTFSH}. 

\begin{definition}\label{defn:ssalinebundles}
Let $\cU$ be a quasi-projective arithmetic variety. 
\begin{enumerate}
\item We say that an adelic line bundle $\overline{\cL} \in \acPic(\cU)$ is \cdef{strongly semiample} if it is isomorphic to an object $(\cL,(\cX_i,\overline{\cL}_i,\ell_i)_{i\geq 1})$ where each $\overline{\cL}_i$ is a global tropical Fubini--Study Hermitian line bundle i.e., a line bundle $\cL_i$ equipped with a global tropical Fubini--Study metric $\nrm{\cdot}_i$ on $\cX_i$. 
We denote the subcategory of strongly semiample adelic line bundles by $\acPic(\cU)_{\ssa}$. 
\item We say that an adelic line bundle $\overline{\cL} \in \acPic(\cU)$ is \cdef{semiample} if there exists a strongly semiample adelic line bundle $\overline{\cM} \in \acPic(\cU)$ such that $a\overline{\cL} + \overline{\cM}$ of strongly semiample for all positive integers $a$. 
\end{enumerate}
\end{definition}

\begin{definition}\label{defn:ssadivisors}
Let $\cU$ be a quasi-projective arithmetic variety. 
We define the \cdef{class group of strongly semiample arithmetic divisors on $\cU$} to be the preimage of $\aPic(\cU)_{\ssa}$ under the isomorphism from \autoref{prop:isomclassadelic}, and we will denote this group by $\aCaCl(\cU)_{\ssa}$. 
\end{definition}

\begin{remark}
The wording ``strongly semiample'' comes from Yuan and Zhang's definition of a strongly nef adelic line bundle \cite[Definition 2.6.2]{YuanZhang:AdelicLineBundles}, which is one where the Hermitian line bundles $\overline{\cL}_i$ in the definition of an adelic line bundle are required to be nef in the sense that they admit a continuous semipositive metric (on the complex points) and have non-zero arithmetic degree on any 1-dimensional closed subvariety of $\cX_i$.

Our notion of strongly semiample requires an a priori stronger condition on the metric at the Archimedean place, namely that the line bundle is semiample which implies the existence of a continuous semipositive metric. However, we do not impose any Arakelov condition.
It would be very interesting to find a Berkovich analytic characterization of the arithmetic degree condition. 
\end{remark}

\begin{remark}\label{remark:semiamplemetrized}
In \cite[Sections 3 \& 4]{Zhang:PositiveArithmeticVarieties}, Zhang defined the notion of semiample metrized adelic line bundle on projective arithmetic varieties. 
Using \cite[Theorem 3.5]{Zhang:PositiveArithmeticVarieties} and \cite[Theorem 0.2]{Moriwaki:SemiampleHermitian}, we note that a semiample line bundle $\cL$ with ample generic fiber on a projective arithmetic variety $\cX$ will give rise to semiample metrized adelic line bundle. 
\end{remark}

\subsection{Analytification of strongly semiample adelic line bundles on quasi-projective arithmetic varieties}
First, we determine the metric induced by the analytification of a strongly semiample adelic line bundle. 
In the course of our proof, we will describe the analytifcation of an adelic line bundle on a quasi-projective arithmetic variety, and for our purposes, we will use the analytification map in the projective setting as defined in Subsection \ref{subsubsec:analytificationnew}. 

\begin{lemma}\label{lemma:analytification_ssa_eqvsemip}
Let $\cU$ be a quasi-projective arithmetic variety, and let $\overline{\cL}$ be a strongly semiample adelic line bundle on $\cU$. 
Then, $\overline{\cL}^{\an}$ is a metrized line bundle on $\cU^{\an}$ equipped with a norm-equivariant and continuous semipositive metric. 
Moreover, there is a natural transformation
\[
\acPic(\cU)_{\ssa} \to \acPic(\cU^{\an})_{\eqv,\semip}
\]
\end{lemma}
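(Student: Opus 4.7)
The plan is to construct the metric $\phi^{\an}$ on $\cL^{\an}$ as a compact limit on $\cU^{\an}$ of tropical Fubini--Study metrics coming from the analytifications of the global tropical Fubini--Study Hermitian line bundles in the Cauchy data, and then verify the required properties.

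First, I would fix a representative $(\cL,(\cX_i,\overline{\cL}_i,\ell_i)_{i\geq 1})$ of $\overline{\cL}$ in which each $\overline{\cL}_i = (\cL_i,\phi_i)$ carries a global tropical Fubini--Study metric as in \autoref{defn:GTFSH}. Using the analytification of Subsection \ref{subsubsec:analytificationnew}, each $\overline{\cL}_i$ produces a norm-equivariant tropical Fubini--Study metrized line bundle $\overline{\cL}_i^{\an}=(\cL_i^{\an},\phi_i^{\an})$ on $\cX_i^{\an}$, with the same defining global sections. Restricting via the open immersion $\cU^{\an}\subset\cX_i^{\an}$ and transporting along $\ell_i^{\an}$ yields a sequence of norm-equivariant tropical Fubini--Study metrics on the common line bundle $\cL^{\an}$ over $\cU^{\an}$, which I denote $\phi_i^{\an}|_{\cU^{\an}}$.

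The main step is to show that this sequence converges compactly on $\cU^{\an}$ to a continuous metric $\phi^{\an}$. Fix a boundary divisor $(\cX_0,\overline{\cD}_0)$ of $\cU$ (\autoref{defn:boundarydivisor}), and let $g_{\cD_0}^{\an}$ denote the Green's function attached to its analytification, a continuous function on $\cX_0^{\an}\setminus |\cD_0|^{\an}\supset\cU^{\an}$. The Cauchy condition on the representing data produces a null sequence $\varepsilon_i\to 0$ of positive rationals with $-\varepsilon_i\underline{\cD}_0\leq \widehat{\ddiv}(\ell_j\ell_i^{-1})\leq \varepsilon_i\underline{\cD}_0$ for $j\geq i$, and translating through the correspondence between arithmetic divisors and additive metrics (\autoref{prop:classisom_Berkovich}) yields the pointwise bound
\[
\bigl|\phi_i^{\an} - \phi_j^{\an}\bigr| \leq \varepsilon_i\cdot g_{\cD_0}^{\an}
\]
on $\cU^{\an}$. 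The key observation is that for any compact $K\subset\cU^{\an}$, the set $K$ is disjoint from $|\cD_0|^{\an}$, so $g_{\cD_0}^{\an}$ is bounded on $K$ by continuity. Hence $\{\phi_i^{\an}|_K\}$ is a uniform Cauchy sequence, whose uniform limit gives a well-defined continuous metric $\phi^{\an}$ on $\cL^{\an}$. Since each $\phi_i^{\an}$ is norm-equivariant and this property passes to pointwise limits, $\phi^{\an}$ is norm-equivariant; and by construction the data $(\cX_i,\cL_i,\phi_i^{\an})$ of projective models with tropical Fubini--Study metrics compactly converging to $\phi^{\an}$ is exactly what \autoref{defn:continuous_semipositive} demands, so $\phi^{\an}$ is continuous semipositive.

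For the natural transformation, a morphism $\iota\colon\overline{\cL}\to\overline{\cL}'$ of strongly semiample adelic line bundles corresponds to an isomorphism $\iota\colon\cL\to\cL'$ for which $\{\widehat{\ddiv}(\ell_i'\iota\ell_i^{-1})\}$ converges to $0$ in the boundary topology; reapplying the bound above shows that $\iota^{\an}$ identifies $\phi^{\an}$ with $\phi'^{\an}$ as uniform limits on every compact, i.e.\ is an isometry. Independence of $\overline{\cL}^{\an}$ from the chosen representative follows from the same mechanism applied to the identity morphism, so the assignment $\overline{\cL}\mapsto\overline{\cL}^{\an}$ is well defined and functorial. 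The main technical obstacle will be establishing the displayed bound cleanly: one must translate inequalities of $(\mQ,\mZ)$-divisors on different projective models $\cX_i,\cX_j$ into an inequality of metrics on $\cL^{\an}$ on the common base $\cU^{\an}$, taking care to handle the mixing of Archimedean and non-Archimedean fibers of $\pi\colon\cU^{\an}\to\sM(\mZ)$ in a uniform way. Once that bound is secured, the compactness argument — together with local compactness of $\cU^{\an}$ from \autoref{prop:Berkovichproperties}(1) — delivers the statement.
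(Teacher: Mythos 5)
Your proposal is correct and follows essentially the same strategy as the paper's proof: apply the analytification of Subsection~\ref{subsubsec:analytificationnew} (via \autoref{lemma:equivGTFS}) to each term of the Cauchy data to get norm-equivariant tropical Fubini--Study metrics on the models, restrict to $\cU^{\an}$, and use the boundary-norm Cauchy condition to obtain compact convergence, after which \autoref{defn:continuous_semipositive} applies directly. The only difference is presentational: the paper defers the compact-convergence estimate and the functoriality to the proof of Proposition~3.4.1 in \cite{YuanZhang:AdelicLineBundles}, whereas you spell out the boundary-divisor bound $\lvert\phi_i^{\an}-\phi_j^{\an}\rvert\leq\varepsilon_i\,g_{\cD_0}^{\an}$ and the boundedness of $g_{\cD_0}^{\an}$ on compacts explicitly — which is precisely the content of that cited argument.
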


\begin{proof}
The proof will essentially follow from the definition of analytification of an adelic line bundle on a quasi-projective arithmetic variety, which we now recall.

Let $(\cL,(\cX_i,\overline{\cL}_i,\ell_i)_{i\geq 1})$ be an object of $\acPic(\cU)_{\ssa}$. 
Note that $\overline{\cL}_i = (\cL_i,\phi_i)$ is a global tropical Fubini--Study Hermitian line bundle on $\cX_i$. 
By \autoref{lemma:equivGTFS}, we know that $\phi_i^{\an}$ is a norm-equivariant, tropical Fubini--Study metric on $\cL_i^{\an}$. 
Let $\phi_{i|\cU^{\an}}^{\an}$ denote the metric of $\cL \cong \cL_{i|\cU}$ over $\cU^{\an}$ induced by $(\cX_i,\overline{\cL}_i)$, and note that $\phi^{\an}_{i|\cU^{\an}}$ is a tropical Fubini--Study metric on $\cU^{\an}$ by \cite[Proposition 2.14.(6)]{PilleSchneider:Global}. 
As in \cite[Proof of Proposition 3.4.1:~quasi-projective case]{YuanZhang:AdelicLineBundles}, we see that $\phi^{\an}_{i|\cU^{\an}}$ converges pointwise to a continuous metric $\phi_{\cU^{\an}}$, and moreover, $\phi^{\an}_{i|\cU^{\an}}$ is compactly convergent to $\phi_{\cU^{\an}}$. 
The first statement now follows from our definition of continuous semipositive metric (\autoref{defn:continuous_semipositive}). To conclude, we note that in \cite[Proof of Proposition 3.4.1:~quasi-projective case]{YuanZhang:AdelicLineBundles}, the authors show that the analytification map is fully faithful, and hence the result follows.
\end{proof}

Now, we need to show that a norm-equivariant, continuous semipositive metric on a line bundle $\cL$ on $\cU^{\an}$ comes from a strongly semiample adelic line bundle on $\cU$. 
Recall the definitions and notation from \autoref{defn:classgroup_eqvsemip_Berk} and \autoref{defn:ssadivisors}. 
We can rephrase our desired statement as follows:

\begin{prop}\label{prop:isom_ssadivisor_semipositive}
Let $\cU$ be a quasi-projective arithmetic variety. There is an isomorphism
\[
\aCaCl(\cU)_{\ssa} \cong \aCaCl(\cU^{\an})_{\eqv,\semip}. 
\]
\end{prop}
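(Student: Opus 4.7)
The plan is to show that the natural transformation $\acPic(\cU)_{\ssa} \to \acPic(\cU^{\an})_{\eqv,\semip}$ of \autoref{lemma:analytification_ssa_eqvsemip}, after passing to isomorphism classes and composing with the class group isomorphisms of \autoref{prop:isomclassadelic} and \autoref{defn:classgroup_eqvsemip_Berk}, gives a bijection. Injectivity will be inherited from the fully faithfulness of the general analytification functor $\acPic(\cU) \to \acPic(\cU^{\an})_{\eqv}$ (due to Yuan--Zhang and Song, and recalled in the proof of \autoref{lemma:analytification_ssa_eqvsemip}), together with the fact that both $\aCaCl(\cU)_{\ssa}$ and $\aCaCl(\cU^{\an})_{\eqv,\semip}$ are defined as preimages of the corresponding Picard subcategories.

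For surjectivity, start with a class represented by $(\cL,\phi) \in \aPic(\cU^{\an})_{\eqv,\semip}$. By \autoref{defn:continuous_semipositive}, there is a net of projective models $(\cX_i,\cL_i)$ of $(\cU,\cL)$ and tropical Fubini--Study metrics $\phi_i$ on $\cL_i^{\an}$ with $\phi_{i|\cU^{\an}}$ compactly converging to $\phi$. By \autoref{lemma:equivGTFS}, each $\phi_i$ corresponds under the analytification map of \autoref{subsubsec:analytificationnew} to a global tropical Fubini--Study Hermitian line bundle $\overline{\cL}_i$ on $\cX_i$. Since $\cU^{\an}$ is locally compact and countable at infinity (\autoref{prop:Berkovichproperties}.(1)), the topology of compact convergence is metrizable, so we may extract a sequence from the net. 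To promote the sequence $(\cX_i,\overline{\cL}_i,\ell_i)$ to a strongly semiample adelic line bundle, one must verify the Cauchy condition: for any choice of boundary divisor $(\cX_0,\overline{\cD}_0)$ there must exist $\varepsilon_i \to 0$ with $-\varepsilon_i\underline{\cD}_0 \leq \widehat{\ddiv}(\ell_j\ell_i^{-1}) \leq \varepsilon_i\underline{\cD}_0$ for every $j \geq i$.

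The main obstacle is this Cauchy verification. Over $\cU^{\an}$, the Green's function of $\widehat{\ddiv}(\ell_j\ell_i^{-1})$ agrees with the difference $\phi_j^{\an} - \phi_i^{\an}$ evaluated against a common rational section, and compact convergence gives uniform control of these differences on compact subsets of $\cU^{\an}$. Upgrading this to a bound by $\varepsilon_i g_{\cD_0}$ exploits two features: the norm-equivariance of all metrics involved, which propagates pointwise bounds along norm-equivalence classes; and the fact that $g_{\cD_0}$ is positive on $\cU^{\an}$ and diverges along the boundary (by anti-continuity of the reduction map), so small compact control inside $\cU^{\an}$ suffices to dominate the differences throughout. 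This mirrors the technique used by Song in \cite[Theorem 1.1]{Song:EquivariantAdelic} for arbitrary continuous norm-equivariant metrics, and here I expect it to apply verbatim once one records that tropical Fubini--Study metrics are, in particular, continuous. Once the Cauchy condition is in hand, the resulting strongly semiample adelic line bundle analytifies back to $(\cL,\phi)$ by the very compact convergence guaranteed in \autoref{defn:continuous_semipositive}, completing surjectivity.
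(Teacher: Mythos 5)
Your overall route matches the paper's: forward direction via \autoref{lemma:analytification_ssa_eqvsemip}, then construct an inverse by translating a continuous semipositive, norm-equivariant metric into a Cauchy sequence of global tropical Fubini--Study Hermitian models. That is exactly what the paper does (via \autoref{lemma:Greensfunctionsemipositive} and \autoref{prop:inversemap_semipositive_ssa}), and both proofs correctly attribute the machinery to Song.

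Where your sketch goes astray is in the stated reason the Cauchy estimate holds. You assert that compact convergence inside $\cU^{\an}$, combined with $g_{\cD_0}$ being ``positive on $\cU^{\an}$ and diverg[ing] along the boundary,'' suffices to dominate the differences $\phi_j^{\an}-\phi_i^{\an}$ by $\varepsilon_i\,\wt{g}_{\cD_0}$. Neither piece of that rationale is quite right. First, $\wt{g}_{\cD_0}$ is \emph{not} positive on $\cU^{\an}$: by \autoref{prop:Greens_nonvanishing} it vanishes identically on the interior $\cU^{\beth}$ and is positive only on $\cU^b$. (On $\cU^{\beth}$ this is harmless precisely because the differences of tropical Fubini--Study Green's functions also vanish there, but that needs to be said, not inferred from positivity.) Second, knowing $g_{\cD_0}$ blows up near the algebraic boundary does not, on its own, dominate an unknown function $|\phi_j^{\an}-\phi_i^{\an}|$ that could a priori blow up as fast or faster; divergence of the majorant is not a substitute for an actual comparison. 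The mechanism that closes this gap in the paper (following Song) is the compactness of the \emph{normalized} boundary: form the quotient functions $h_i=(\phi_j^{\an}-\phi_i^{\an})/\wt{g}_{\cD_0}$ on $\cU^b$, observe that norm-equivariance makes them descend to $\wt{\cU}^b$, identify $\wt{\cU}^b$ with the compact set $\delta_{\cD_0}(\cU)\subset\cU^{\an}$ from \autoref{prop:alternative_normalized_boundary}, and then apply compact convergence on that one specific compact set to get a uniform bound on the ratio. Norm-equivariance then propagates the bound along each equivalence class, and vanishing on $\cU^{\beth}$ finishes the job. Your appeal to Song's result is sound, and the extraction of a sequence from the net via hemicompactness is a reasonable (if unstated in the paper) step, but the explanatory paragraph should be rewritten to center the compactness of $\wt{\cU}^b$ rather than the growth of $g_{\cD_0}$.
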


By class group isomorphisms and \autoref{lemma:analytification_ssa_eqvsemip}, we have a map
\[
\aCaCl(\cU)_{\ssa} \to \aCaCl(\cU^{\an})_{\eqv,\semip},
\]
so it suffices to define an inverse map
\[
\aCaCl(\cU^{\an})_{\eqv,\semip} \to \aCaCl(\cU)_{\ssa}. 
\]

Before defining the map, we need to understand the Green's functions on a representative of $\aCaCl(\cU^{\an})_{\eqv,\semip}$. 

\begin{lemma}\label{lemma:Greensfunctionsemipositive}
Let $(\cD,g_{\cD})$ be an arithmetic divisor on $\cU^{\an}$, which is a representative of an element of $\aCaCl(\cU^{\an})_{\eqv,\semip}$. 
There exists a net of projective models $(\cX_i)$ and arithmetic divisors $(\cE_i,g_{\cE_i}) \in \aDiv(\cX_i^{\an})_{\eqv, \tFS}$ such that $\cE_{i|\cU^{\an}} = \cD$, $g_{\cE_i|\cU^{\an}}$ is a Green's function for $\cD$ for each $i$,  and the net $(g_{\cE_i|\cU^{\an}})$ compactly converges to $g_{\cD}$. 
%
\end{lemma}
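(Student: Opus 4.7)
The plan is to use the isomorphism between arithmetic class groups and Picard groups to pass from the arithmetic-divisor side to the metrized-line-bundle side, apply the defining approximation property of continuous semipositive metrics (\autoref{defn:continuous_semipositive}) to obtain projective models with tropical Fubini--Study metrics, and then translate back by fixing, for each model, a rational section that restricts to the canonical section of $\cO(\cD)$ on $\cU$.

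Via \autoref{prop:classisom_Berkovich} together with \autoref{defn:classgroup_eqvsemip_Berk}, the representative $(\cD, g_\cD)$ corresponds to a norm-equivariant, continuous semipositive metrized line bundle $(\cO(\cD)^{\an}, \phi)$ on $\cU^{\an}$ satisfying $g_\cD(x) = \phi(s_\cD(x))_x$, where $s_\cD$ is the canonical rational section of $\cO(\cD)$. By \autoref{defn:continuous_semipositive}, there then exists a net of projective $\mZ$-models $(\cX_i, \cL_i)$ of $(\cU, \cO(\cD))$ together with tropical Fubini--Study metrics $\phi_i$ on $\cL_i^{\an}$ whose restrictions to $\cU^{\an}$ compactly converge to $\phi$. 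For each index $i$, the isomorphism $\cL_{i|\cU} \cong \cO(\cD)$ promotes $s_\cD$ to a rational section $s_i$ of $\cL_i$ on $\cX_i$, and one sets $\cE_i \coloneqq \ddiv(s_i)$; this is a Cartier divisor on $\cX_i$ with $\cE_{i|\cU^{\an}} = \cD$. The induced Green's function $g_{\cE_i}(x) \coloneqq \phi_i(s_i(x))_x$ on $\cX_i^{\an} \setminus |\cE_i|^{\an}$ is tropical Fubini--Study by \autoref{defn:Berk_tfs_semi_arithmeticdivisors}, and the compact convergence $g_{\cE_i|\cU^{\an}} \to g_\cD$ on $\cU^{\an} \setminus |\cD|^{\an}$ follows immediately by evaluating $\phi_{i|\cU^{\an}} \to \phi$ on the fixed section $s_\cD$.

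The hard part will be the norm-equivariance of $g_{\cE_i}$, since a generic tropical Fubini--Study metric on $\cL_i^{\an}$ need not be norm-equivariant when the real constants $\lambda_j$ appearing in its defining expression are nonzero. This is resolved by invoking \autoref{lemma:equivGTFS}, which identifies the analytifications of global tropical Fubini--Study Hermitian line bundles on $\cX_i$ with the norm-equivariant tropical Fubini--Study metrized line bundles on $\cX_i^{\an}$. The net $(\cX_i, \cL_i, \phi_i)$ produced by \autoref{defn:continuous_semipositive} can therefore be chosen so that each $\phi_i$ is the analytification of a global tropical Fubini--Study Hermitian line bundle on the projective model $\cX_i$ — the defining tropical Fubini--Study data being algebraic — which ensures that each $\phi_i$ is norm-equivariant on the full analytification $\cX_i^{\an}$ and places $(\cE_i, g_{\cE_i})$ inside $\aDiv(\cX_i^{\an})_{\eqv, \tFS}$ as required.
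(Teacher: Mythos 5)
Your proposal follows essentially the same route as the paper: pass from the arithmetic-divisor side to a continuous semipositive metrized line bundle $(\cO(\cD)^{\an},\phi)$, unwind \autoref{defn:continuous_semipositive} to get a net of projective models $(\cX_i,\cL_i)$ with tropical Fubini--Study metrics $\phi_i$ compactly converging to $\phi$, and then repackage each $(\cL_i^{\an},\phi_i)$ as an arithmetic divisor by taking the divisor and Green's function of a rational section. The added precision of choosing $s_i$ to be the extension of the fixed section $s_\cD$ via $\cL_{i|\cU}\cong\cO(\cD)$ is a useful technical improvement over the paper's more casual ``for $s_i$ a rational section of $\cL_i$,'' since it guarantees $\cE_{i|\cU} = \cD$ rather than just $\cE_{i|\cU}\sim\cD$.

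Your treatment of norm-equivariance is where you and the paper diverge, and where a caution is warranted. The paper simply asserts $(\cL_i^{\an},\nrm{\cdot}_i)\in\aPic(\cX_i^{\an})_{\eqv,\tFS}$ without comment; you are right to notice that this does not follow automatically, since a tropical Fubini--Study metric $m^{-1}\max_j(\log|s_j|+\lambda_j)$ with nonzero $\lambda_j$ need not be norm-equivariant (the constants $\lambda_j$ do not scale with the exponent $t$ relating the seminorms $|\cdot|_x=|\cdot|_{x_1}^t$). However, your resolution is not yet complete: writing ``the net $(\cX_i,\cL_i,\phi_i)$ produced by \autoref{defn:continuous_semipositive} can therefore be chosen'' to lie in the essential image of the global analytification map is an assertion, not a consequence of \autoref{lemma:equivGTFS}. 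That lemma identifies $\acPic(\cX_i)_{\gtFS}$ with $\acPic(\cX_i^{\an})_{\eqv,\tFS}$; it does not say that \emph{every} tropical Fubini--Study approximant to a given norm-equivariant semipositive metric can be upgraded to a norm-equivariant one. What you actually need is either (a) a strengthening of \autoref{defn:continuous_semipositive} to require the $\phi_i$ to be norm-equivariant tropical Fubini--Study (which is how the paper implicitly reads it), or (b) an argument that any tropical Fubini--Study metric appearing in the net can be replaced, up to uniform error going to zero, by a pure (all $\lambda_j=0$) tropical Fubini--Study metric, which is norm-equivariant. As it stands, this part of your argument points at a real subtlety but does not close it.
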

\begin{proof}
By \autoref{defn:Berk_tfs_semi_arithmeticdivisors}, a representative of $\aCaCl(\cU^{\an})_{\eqv,\semip}$ corresponds to an arithmetic divisor $(\cD,g_{\cD})$ on $\cU^{\an} $ such that the metric $\nrm{\cdot}_{g_{\cD}}$ associated to $g_{\cD}$ is a continuous semipositive metric on $\cO(\cD)$. 
The definition of continuous semipositive metric (\autoref{defn:continuous_semipositive}) implies that there exists a net $(\cX_i,\cL_i,\nrm{\cdot}_i)$ where $(\cX_i,\cL_i)$ is a net of projective models for $(\cU,\cO(\cD))$ and $\nrm{\cdot}_i$ is a tropical Fubini--Study metric on $\cL_i^{\an}$ such that the restriction of the metrics $\nrm{\cdot}_{i|\cU^{\an}}$ converges compactly to $\nrm{\cdot}_{g_{\cD}}$. 
Since $(\cL_i^{\an},\nrm{\cdot}_i) \in \aPic(\cX_i^{\an})_{\eqv,\tFS}$, we can find arithmetic divisors $(\cE_i,g_{\cE_i})$ where $\cE_i = \ddiv(s_i)$ and $g_{\cE_i} = -\log \nrm{s_i}_{i}$ for $s_i$ a rational section of $\cL_i$ is tropical Fubini--Study.  
Moreover, the restriction of each $g_{\cE_i}$ gives a Green's function for $\cE_{i|\cU} = \cD$, and the above compact convergence of metrics implies that $g_{\cE_i|\cU^{\an}}$ compactly converge to $g_{\cD}$. 
\end{proof}

Returning to our goal of constructing an inverse map, 
let $(\cX_0,\overline{\cD}_0)$ be a boundary divisor for $\cU$ (\autoref{defn:boundarydivisor}). 
Using \cite[Lemma 3.3.2]{YuanZhang:AdelicLineBundles}, Green's function $g_{\cD_0}$ of the strictly effective arithmetic divisor $\overline{\cD}_0$ can be analytified to an analytic Green's function on $\cX^{\an}$, which induces a Green's function on $\cU^{\an}$, which we will denote by $\wt{g}_{\cD_0}$. 
Moreover, the construction of the analytification implies the following result. 
We remind the reader that $\cU^{\beth}$ is the interior of $\cU^{\an}$ as defined in \autoref{defn:interiorboundary}. 

\begin{prop}[\protect{\cite[Proposition 3.2]{Song:EquivariantAdelic}}]\label{prop:Greens_nonvanishing}
For any $x\in \cU^{\an}$, $\wt{g}_{\cD_0}(x)\geq 0$ and $\wt{g}_{\cD_0}(x) = 0$ if and only if $x\in \cU^{\beth}$.
\end{prop}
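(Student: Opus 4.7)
The plan is to unwind the Yuan--Zhang analytification of $\overline{\cD}_0$ and analyze $\wt{g}_{\cD_0}(x)$ according to whether $x$ is Archimedean or non-Archimedean. Because $|\cD_0| = \cX_0 \setminus \cU$, the canonical meromorphic section $s_{\cD_0}$ of $\cO(\cD_0)$ is nowhere vanishing on $\cU$ and gives a trivialization of $\cO(\cD_0)_{|\cU}$. Consequently $\wt{g}_{\cD_0}(x) = -\log \nrm{s_{\cD_0}}^{\an}_{\YZ,x}$ for every $x \in \cU^{\an}$, and the statement reduces to showing $\nrm{s_{\cD_0}}^{\an}_{\YZ,x} \leq 1$ with equality if and only if $x \in \cU^{\beth}$.

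For Archimedean $x \in \cU^{\an}$, the Yuan--Zhang analytification uses the strictly effective Hermitian Green's function $g_{\cD_0}$ on $\cX_0(\mC) \setminus |\cD_0|(\mC) = \cU(\mC)$, extended across the Archimedean segment of $\sM(\mZ)$ by norm-equivariance. Strict effectivity gives $g_{\cD_0} > 0$ on $\cU(\mC)$; the norm-equivariant extension only rescales this value by a positive real number, so $\wt{g}_{\cD_0}(x) > 0$. Such $x$ also satisfies $|n|_x > 1$ for some $n \in \mZ$ and hence cannot lie in $\cU^{\beth}$ by condition (1) of \autoref{defn:interiorboundary}, so both sides of the biconditional agree.

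For non-Archimedean $x \in \cU^{\an}$, the restriction of $|\cdot|_x$ to $\mZ$ belongs to the non-Archimedean locus of $\sM(\mZ)$, so $|a|_x \leq 1$ for all $a \in \mZ$; properness of $\cX_0$ and the valuative criterion then produce the unique extension $\varphi_x^{\circ}\colon \Spec R_x \to \cX_0$. By definition, $x$ lies in $\cU^{\beth}$ if and only if the closed point of $\Spec R_x$ maps into $\cU$; otherwise it maps into $|\cD_0|$. The Yuan--Zhang model metric declares $\nrm{s}^{\an}_{\YZ,x} = 1$ for any generator $s$ of the free rank-one $R_x$-module $(\varphi_x^{\circ})^*\cO(\cD_0)$. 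If $x \in \cU^{\beth}$, then $s_{\cD_0}$ is nowhere vanishing in a neighborhood of the image closed point and is therefore itself a generator, giving $\nrm{s_{\cD_0}}^{\an}_{\YZ,x} = 1$ and $\wt{g}_{\cD_0}(x) = 0$. If $x \notin \cU^{\beth}$, then $s_{\cD_0}$ lies in the maximal ideal, so $s_{\cD_0} = f \cdot s$ for some generator $s$ and some $f$ in the maximal ideal of $R_x$; consequently $\nrm{s_{\cD_0}}^{\an}_{\YZ,x} = |f|_x < 1$ and $\wt{g}_{\cD_0}(x) > 0$.

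The main technical point is to keep straight how the Yuan--Zhang analytification assigns different recipes at Archimedean versus non-Archimedean points of $\sM(\mZ)$, and to verify in each regime that strict effectivity, respectively the geometric meaning of $\cU^{\beth}$, is precisely what forces the strict positivity of $\wt{g}_{\cD_0}$ off $\cU^{\beth}$. The argument is ultimately a careful bookkeeping exercise combining the valuative criterion with the definition of the model metric, with strict effectivity of $\overline{\cD}_0$ serving as the indispensable input at the Archimedean place.
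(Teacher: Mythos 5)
Your argument is correct, and it proceeds by what is surely the intended decomposition: treat Archimedean and non-Archimedean points separately, observing that the biconditional holds in the Archimedean regime because \emph{neither} side can hold (strict effectivity of $\overline{\cD}_0$ forces $\wt{g}_{\cD_0}>0$, while condition (1) of \autoref{defn:interiorboundary} excludes Archimedean points from $\cU^{\beth}$), and in the non-Archimedean regime by unwinding the model metric. Since the proposition is cited from \cite[Proposition 3.2]{Song:EquivariantAdelic} and the paper does not reproduce Song's proof, a direct comparison is not possible, but your route is the natural one.

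A few small checks worth noting in your favor. You correctly identify that for a non-Archimedean $x\in\cU^{\an}$, the criterion $x\in\cU^{\beth}$ is equivalent to the closed point of $\Spec R_x$ landing in $\cU$ under the extension $\varphi_x^{\circ}\colon \Spec R_x\to\cX_0$ (which the paper itself records as $\cU^{\beth}=\red^{\beth,-1}(\cU)$). The element $f\in R_x$ with $(\varphi_x^{\circ})^*s_{\cD_0}=f\cdot s$ is nonzero because $s_{\cD_0}$ is nonvanishing on $\cU$ and $x\in\cU^{\an}$, so $0<|f|_x<1$ and $\wt{g}_{\cD_0}(x)=-\log|f|_x>0$ as you claim. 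The degenerate case where $\sH_x$ is trivially valued (including fibers over $x_0$ or over $x_{p,\infty}$) is handled automatically: there $R_x=\sH_x$, the closed point of $\Spec R_x$ is also its generic point, which necessarily maps into $\cU$, and $s_{\cD_0}$ generates, giving $\wt{g}_{\cD_0}(x)=0$ consistently with $x\in\cU^{\beth}$. If one wants to be pedantic about $\cD_0$ possibly being a $\mQ$-divisor, one should pass to $\cO(N\cD_0)$ for a clearing denominator $N$ and rescale, but \autoref{defn:boundarydivisor} in fact requires an integral arithmetic divisor, so no adjustment is needed.
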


We now recall a construction of Song which gives an alternative description of the normalized boundary $\wt{\cU}^{b}$ of $\cU^{\an}$ (\autoref{defn:interiorboundary}) using properties of the Green's function $\wt{g}_{\cD_0}$. 

\begin{prop}[\protect{\cite[Proposition 3.4 \& Corollary 3.5]{Song:EquivariantAdelic}}]\label{prop:alternative_normalized_boundary}
Keep the notation as above.

The subspace $\delta_{\overline{\cD}_0}(\cU)$ of $\cU^{\an}$ defined via
\[
\delta_{\overline{\cD}_0}(\cU) = \{ x\in \cU^b : \wt{g}_{\cD_0}(x) = 1 \} \cup \{ x \in \cU^b : \max_{y \sim x}\, \wt{g}_{\cD_0}(y) \leq 1 \textrm{ and }\wt{g}_{\cD_0}(x) = \max_{y\sim x}\, \wt{g}_{\cD_0}(y)\}.
\]
is a compact Hausdorff subspace of $\cU^{\an}$ which is homeomorphic to $\wt{\cU}^b$. 
\end{prop}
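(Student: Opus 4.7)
The plan is to show that the natural quotient map $q\colon \cU^b \to \wt{\cU}^b$ restricts to a homeomorphism $q|_{\delta_{\overline{\cD}_0}(\cU)}\colon \delta_{\overline{\cD}_0}(\cU) \to \wt{\cU}^b$. Since $\wt{\cU}^b$ is compact Hausdorff by the theorem of Song recalled just above, both conclusions then follow at once: $\delta_{\overline{\cD}_0}(\cU)$ inherits Hausdorffness as a subspace of $\cU^{\an}$, and compactness is transported across the homeomorphism.

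The key structural input is that the Green's function $\wt{g}_{\cD_0}$ of the \emph{algebraic} boundary divisor $\overline{\cD}_0$ is norm-equivariant, so that along each $\sim$-equivalence class $[x]\subset \cU^b$ the value of $\wt{g}_{\cD_0}$ scales linearly in the scaling exponent $t$. Combined with the strict positivity $\wt{g}_{\cD_0}>0$ on $\cU^b$ from \autoref{prop:Greens_nonvanishing}, a case analysis pins down $\delta_{\overline{\cD}_0}(\cU)\cap [x]$: at a non-Archimedean point, $t$ ranges over $(0,\infty)$, so there is a unique $y\sim x$ with $\wt{g}_{\cD_0}(y)=1$ and the second clause in the definition of $\delta_{\overline{\cD}_0}(\cU)$ is vacuous; at an Archimedean point, $t$ ranges only over $(0,1]$, so writing $g_1$ for the value of $\wt{g}_{\cD_0}$ at the representative with $t=1$, the designated element is the rescaling to $\wt{g}_{\cD_0}=1$ when $g_1\geq 1$, and the $t=1$ representative itself when $g_1<1$. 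This shows that $q|_{\delta_{\overline{\cD}_0}(\cU)}$ is a bijection, and continuity is automatic.

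The expected main obstacle is continuity of the inverse. The natural approach is to build a normalization map $\sigma\colon \cU^b\to \cU^b$ sending each $x$ to its designated representative in $\delta_{\overline{\cD}_0}(\cU)\cap [x]$ via the scaling prescription above, then to verify that $\sigma$ is continuous and constant on $\sim$-classes, so that it descends to a continuous map $\bar\sigma\colon \wt{\cU}^b\to \delta_{\overline{\cD}_0}(\cU)$ inverse to $q|_{\delta_{\overline{\cD}_0}(\cU)}$. Continuity of $\sigma$ rests on (i) continuity of the scaling action $(x,t)\mapsto y_{x,t}$ defined by $|\cdot|_{y_{x,t}}=|\cdot|_x^t$ in the Berkovich topology, and (ii) a careful gluing check at the transition locus $\{g_1=1\}$ among Archimedean points, where the two prescriptions for the scaling exponent coincide and must be shown to depend continuously on $x$ across the locus. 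Once $\bar\sigma$ is continuous, the compact-to-Hausdorff principle upgrades the continuous bijection $q|_{\delta_{\overline{\cD}_0}(\cU)}$ to a homeomorphism, completing the proof.
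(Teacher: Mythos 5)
The paper does not supply its own proof of this proposition; it is quoted verbatim from Song's paper (Proposition~3.4 and Corollary~3.5 of \cite{Song:EquivariantAdelic}), so there is no internal argument to compare against. Your plan is, however, almost certainly the intended one: the set $\delta_{\overline{\cD}_0}(\cU)$ is by design a slice of the $\sim$-action on $\cU^b$, and the proof should proceed by showing the quotient $q\colon\cU^b\to\wt{\cU}^b$ restricts to a continuous bijection on the slice with a continuous inverse built from a normalization map. Your case analysis using norm-equivariance plus the strict positivity $\wt{g}_{\cD_0}>0$ on $\cU^b$ from \autoref{prop:Greens_nonvanishing} is correct: at non-Archimedean points the scaling exponent $t$ is unconstrained so the supremum of $\wt{g}_{\cD_0}$ along the class is infinite and only the first clause can hold (at a unique rescaling), while at Archimedean points the exponent is capped (in a way depending on the chosen representative), and the first or second clause is activated according to whether the value at the exponent-$1$ representative is $\geq 1$ or $<1$, the two prescriptions agreeing on the overlap $g_1=1$.

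Two points deserve tightening. First, the concluding sentence applies the compact-to-Hausdorff principle to $q|_{\delta_{\overline{\cD}_0}(\cU)}$, whose domain $\delta_{\overline{\cD}_0}(\cU)$ has not yet been shown compact; that is circular. The principle should instead be applied to $\bar\sigma\colon\wt{\cU}^b\to\delta_{\overline{\cD}_0}(\cU)$, whose domain is compact by Song's theorem and whose codomain is Hausdorff as a subspace of $\cU^{\an}$ by \autoref{prop:Berkovichproperties}.(2) --- or simply observe that once $\bar\sigma$ is verified to be a continuous two-sided inverse, no further upgrade is needed. Second, beyond the transition locus $\{g_1=1\}$ among Archimedean points, you should also check continuity of the normalization map $\sigma$ across the interface where Archimedean points degenerate to trivially valued points of $\cU^b$; a helpful observation is that for a net of Archimedean points in $\cU^b$ approaching this interface, $\wt{g}_{\cD_0}$ stays bounded away from $0$ while the exponent tends to $0$, forcing $g_1\to\infty$, so near the interface only the ``rescale to $\wt{g}_{\cD_0}=1$'' branch of $\sigma$ is active and no gluing subtlety arises there.
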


We now define our inverse map. Our construction and proof follows from the same ideas as \cite{Song:EquivariantAdelic}. 

\begin{prop}\label{prop:inversemap_semipositive_ssa}
Let $\cU$ be a quasi-projective arithmetic variety. 
There is a map
\[
\aCaCl(\cU^{\an})_{\eqv,\semip} \to \aCaCl(\cU)_{\ssa}
\]
which is inverse to the map 
\[
\aCaCl(\cU)_{\ssa}  \to \aCaCl(\cU^{\an})_{\eqv,\semip}.
\]
\end{prop}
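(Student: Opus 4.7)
The plan is to build the inverse map directly from \autoref{lemma:Greensfunctionsemipositive} by extracting a Cauchy sequence from the approximating net. Start with a representative $(\cD,g_{\cD}) \in \aDiv(\cU^{\an})_{\eqv,\semip}$, and invoke \autoref{lemma:Greensfunctionsemipositive} to obtain a net of projective models $(\cX_i)$ together with arithmetic divisors $(\cE_i,g_{\cE_i}) \in \aDiv(\cX_i^{\an})_{\eqv,\tFS}$ such that $\cE_{i|\cU^{\an}} = \cD$ and the Green's functions $g_{\cE_i|\cU^{\an}}$ converge compactly to $g_{\cD}$ on $\cU^{\an}$. By \autoref{lemma:equivGTFS} (applied to each projective model $\cX_i$), each $(\cE_i,g_{\cE_i})$ descends to a global tropical Fubini--Study arithmetic $\mQ$-divisor $\overline{\cE}_i$ on $\cX_i$; record the associated $(\mQ,\mZ)$-divisor $\underline{\cE}_i = (\overline{\cE}_i,\cD) \in \aDiv(\cX_i,\cU) \subset \aDiv(\cU)_{\model}$.

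The central step is to show that, after passing to an appropriate cofinal sequence, $\{\underline{\cE}_i\}$ is Cauchy in the boundary topology on $\aDiv(\cU)_{\model}$ with respect to a boundary divisor $(\cX_0,\overline{\cD}_0)$, and hence defines an element of $\aDiv(\cU)$ whose class lies in $\aCaCl(\cU)_{\ssa}$ by construction. Since the integral parts of $\underline{\cE}_j - \underline{\cE}_i$ on $\cU$ vanish, Cauchy-ness amounts to the pointwise estimate
\[
|g_{\cE_j|\cU^{\an}} - g_{\cE_i|\cU^{\an}}| \leq \varepsilon_i \, \wt{g}_{\cD_0}
\]
holding on all of $\cU^{\an}$ for some $\varepsilon_i \to 0$. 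On $\cU^{\beth}$ one has $\wt{g}_{\cD_0} \equiv 0$ by \autoref{prop:Greens_nonvanishing} and the differences $g_{\cE_j} - g_{\cE_i}$ extend continuously (there is no singularity since $\cE_i|_{\cU} = \cE_j|_{\cU} = \cD$); compact convergence on the compact set $\cU^{\beth}$ forces these differences to vanish uniformly there, which is consistent with the estimate above.

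The decisive ingredient is \autoref{prop:alternative_normalized_boundary}: on the compact Hausdorff subspace $\delta_{\overline{\cD}_0}(\cU) \subset \cU^b$, the Green's function $\wt{g}_{\cD_0}$ is bounded below by a positive constant (by construction, either $\wt{g}_{\cD_0}(x) = 1$ or it realizes the finite maximum of $\wt{g}_{\cD_0}$ along the norm-equivalence class). Compact convergence of $g_{\cE_i|\cU^{\an}} - g_{\cD}$ gives uniform convergence on $\delta_{\overline{\cD}_0}(\cU)$, so the ratio $(g_{\cE_j} - g_{\cE_i})/\wt{g}_{\cD_0}$ converges uniformly to zero there. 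Finally, since every point $x \in \cU^b$ is norm-equivalent to some $y \in \delta_{\overline{\cD}_0}(\cU)$ with $|\cdot|_x = |\cdot|_y^t$, and since both $g_{\cE_j} - g_{\cE_i}$ and $\wt{g}_{\cD_0}$ are norm-equivariant Green's functions of extensions of $0$, the ratio is norm-invariant. Hence the uniform bound on $\delta_{\overline{\cD}_0}(\cU)$ propagates to a uniform bound on $\cU^b$ of the form $|g_{\cE_j} - g_{\cE_i}| \leq \varepsilon_i \, \wt{g}_{\cD_0}$, giving the required Cauchy condition. The hardest part of the argument will be this last propagation step: one must verify that the norm-equivalence classes together with $\cU^{\beth}$ exhaust $\cU^{\an}$ in a way compatible with the extension from compact convergence to the global boundary-norm estimate.

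To finish, one checks that the assignment $(\cD,g_{\cD}) \mapsto \{\underline{\cE}_i\}$ descends to classes: replacing $(\cD,g_{\cD})$ by $(\cD + \widehat{\ddiv}_{\cU^{\an}}(f), g_{\cD} - \log|f|)$ for $f \in \mQ(\cU)^{\times}$ translates the net $\{\underline{\cE}_i\}$ by the model principal divisor $\widehat{\ddiv}(f)$, and different choices of approximating nets differ by null sequences in the boundary topology (by the same compact-convergence argument). This gives a well-defined map $\aCaCl(\cU^{\an})_{\eqv,\semip} \to \aCaCl(\cU)_{\ssa}$. That it is inverse to the analytification map of \autoref{lemma:analytification_ssa_eqvsemip} is a direct check on representatives: starting from a strongly semiample class represented by $(\cL,(\cX_i,\overline{\cL}_i,\ell_i))$, the net of tropical Fubini--Study metrics $\phi_i^{\an}$ constructed in the proof of \autoref{lemma:analytification_ssa_eqvsemip} compactly converges to the continuous semipositive metric $\phi_{\cU^{\an}}$, and feeding this data back through the construction above returns a Cauchy sequence cofinal with (and hence equivalent to) the original one.
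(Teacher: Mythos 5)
Your proposal is correct and follows essentially the same route as the paper's own proof: both start from \autoref{lemma:Greensfunctionsemipositive} to extract a compactly convergent net of tropical Fubini--Study Green's functions, fix a boundary divisor $(\cX_0,\overline{\cD}_0)$, pass to the compact slice $\delta_{\overline{\cD}_0}(\cU)$ of the normalized boundary via \autoref{prop:alternative_normalized_boundary} to turn compact convergence into a uniform estimate on ratios against $\wt{g}_{\cD_0}$, and then propagate by norm-equivariance across $\cU^b$ and by vanishing on $\cU^{\beth}$ to get the global boundary-norm Cauchy estimate. The paper additionally first reduces to $\cD = 0$ by linearity and cites \cite[Lemma 3.3.3]{YuanZhang:AdelicLineBundles} to translate the Green's-function inequalities into divisor inequalities on projective models; your version handles the general $\cD$ directly and is slightly more explicit about the cofinal sequence, the uniform lower bound for $\wt{g}_{\cD_0}$ on $\delta_{\overline{\cD}_0}(\cU)$, and well-definedness on classes, but the substance of the argument is the same.
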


\begin{proof}
Using properties of Green's functions, it suffices to construct our map when $\cD = 0$. 
In other words, we need to show that a norm-equivariant, continuous semipositive Green's function $g$ on $\cU^{\an}$ associated to the trivial divisor comes from the Green's function of an element in $\aCaCl(\cU)_{\ssa}$ with underlying divisor being the trivial divisor.

From \autoref{lemma:Greensfunctionsemipositive}, there exists a net $\{ g_{\cE_i}\}$ of norm-equivariant,  tropical Fubini--Study Green's functions associated to the trivial divisor on $\cU^{\an}$ such that $g_{\cE_i}$ compactly converges to $g$. 
Here we have suppressed the restriction notation. 
We recall that these Green's functions come from a net of semiample projective models $(\cX_i,\overline{\cL}_i)$ of $(\cU,0)$. 
Fix a boundary divisor $(\cX_0,\overline{\cD}_0)$ of $\cU$, and let $\wt{g}_{0}$ be the corresponding analytic Green's function.

By \autoref{prop:Greens_nonvanishing}, we have that $h_{\cE_i} = g_{\cE_i}/\wt{g}_0$ and $h = g/\wt{g}_0$ are continuous, norm-equivariant function on $\cU^{b}$, and hence continuous functions on $\wt{\cU}^b$. 
Using \autoref{prop:alternative_normalized_boundary}, we have that $h_{\cE_i}$ and $h$ also induce a continuous function on the compact Hausdorff subspace $\delta_{\cD_0}(\cU)$. 
By the above compact convergence, we have that $h_{\cE_{i|\delta_{\cD_0}(\cU)}}$ converges uniformly to $h_{|\delta_{\cD_0}(\cU)}$. 
As such, there exists a sequence of rational numbers $\{ \varepsilon_i\}$ converging to zero such that for all $j>i$, we have
\[
|h_{\cE_i}(x) - h_{\cE_j}(x)| \leq \varepsilon_i
\]
for all $x\in \delta_{\cD_0}(\cU)$. 
Therefore, we have that 
\[
-\varepsilon_i \wt{g}_{0}(x) \leq g_{\cE_i}(x) - g_{\cE_j}(x) \leq \varepsilon_i \wt{g}_{0}(x)
\]
for all $j>i$ and all $x\in \delta_{\cD_0}(\cU)$ or equivalently for all $x\in \cU^b$ by norm-equivariance and \autoref{prop:alternative_normalized_boundary}. 
Note that the functions $g_{\cE_i}$ and $\wt{g}_{0}$ all vanishing on $\cU^{\beth}$ by \autoref{prop:Greens_nonvanishing} and hence these above inequalities hold for all $x\in \cU^{\an}.$

By \cite[Lemma 3.3.3]{YuanZhang:AdelicLineBundles}, after possibly normalizing, we have the inequalities
\[
-\varepsilon_i \overline{\cD}_0 \leq \overline{\cE}_i - \overline{\cE}_j \leq \varepsilon_i \overline{\cD}_0.
\]
Therefore, $\overline{\cE}_i$ converges to an adelic divisor $\overline{\cE}$.
Moreover, as with Song's argument, we see that the Green's function associated to $\overline{\cE}$ is $g$, and hence $(\overline{\cE},g)$ is an element of $\aCaCl(\cU)_{\ssa}$. 
Furthermore, it follows from the construction that this is in fact the inverse of the map from \autoref{lemma:analytification_ssa_eqvsemip}. 
\end{proof}

\begin{proof}[Proof of \autoref{prop:isom_ssadivisor_semipositive}]
This follows from \autoref{lemma:analytification_ssa_eqvsemip} and \autoref{prop:inversemap_semipositive_ssa}. 
\end{proof}

Using the above results, we have our main theorem. 

\begin{theorem}[=\autoref{thmx:main0}]
Let $\cU$ be a quasi-projective arithmetic variety. 
There is an equivalence of categories
\[
\acPic(\cU)_{\ssa} \cong \acPic(\cU^{\an})_{\eqv, \semip}.
\]
\end{theorem}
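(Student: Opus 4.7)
The plan is to reduce the categorical equivalence to the class-group isomorphism of \autoref{prop:isom_ssadivisor_semipositive}, which in turn splits into producing a functor in each direction. The forward direction is already prepared: \autoref{lemma:analytification_ssa_eqvsemip} supplies a natural transformation $\acPic(\cU)_{\ssa} \to \acPic(\cU^{\an})_{\eqv,\semip}$, sending $(\cL,(\cX_i,\overline{\cL}_i,\ell_i))$ to the metrized line bundle on $\cU^{\an}$ whose metric is the compact limit of the tropical Fubini--Study metrics $\phi_i^{\an}$ analytified as in Subsection~\ref{subsubsec:analytificationnew}. So the content lives in the reverse direction.

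For the reverse functor, I would first use \autoref{prop:classisom_Berkovich} and \autoref{prop:isomclassadelic} to translate the problem from metrized line bundles to arithmetic divisor class groups, where a representative of $\aCaCl(\cU^{\an})_{\eqv,\semip}$ is an arithmetic divisor $(\cD,g_\cD)$ whose metric $\nrm{\cdot}_{g_\cD}$ is continuous semipositive and norm-equivariant. By \autoref{lemma:Greensfunctionsemipositive}, this data yields a net of semiample projective models $(\cX_i,\cL_i)$ of $(\cU,\cO(\cD))$ with norm-equivariant, tropical Fubini--Study Green's functions $g_{\cE_i}$ compactly converging to $g_\cD$ on $\cU^{\an}$. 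The goal is to upgrade this compact convergence on $\cU^{\an}$ to convergence with respect to the boundary topology on $\aDiv(\cU)_{\model}$, so as to produce a Cauchy sequence of model divisors whose limit is an adelic divisor representing a class in $\aCaCl(\cU)_{\ssa}$.

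The main obstacle, and the heart of the argument, is bridging these two distinct notions of convergence: compact convergence on $\cU^{\an}$ versus control by a boundary divisor. I would address this via Song's technique with the normalized boundary. Fix a boundary divisor $(\cX_0,\overline{\cD}_0)$ and its analytic Green's function $\wt{g}_0$, which by \autoref{prop:Greens_nonvanishing} vanishes exactly on $\cU^{\beth}$. Form the quotients $h_{\cE_i} = g_{\cE_i}/\wt{g}_0$ and $h = g_{\cD}/\wt{g}_0$ on $\cU^b$, which descend to the normalized boundary $\wt{\cU}^b$ by norm-equivariance. Using the compact, Hausdorff identification of $\wt{\cU}^b$ with $\delta_{\overline{\cD}_0}(\cU) \subset \cU^{\an}$ from \autoref{prop:alternative_normalized_boundary}, compact convergence of $g_{\cE_i}$ on this compact locus becomes uniform convergence of $h_{\cE_i}$ to $h$. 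Extracting a vanishing sequence $\{\varepsilon_i\}$, I would obtain pointwise inequalities $-\varepsilon_i \wt{g}_0 \leq g_{\cE_i} - g_{\cE_j} \leq \varepsilon_i \wt{g}_0$ on $\delta_{\overline{\cD}_0}(\cU)$, extended to all of $\cU^{\an}$ by norm-equivariance and the vanishing of both sides on $\cU^{\beth}$.

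Finally, \cite[Lemma 3.3.3]{YuanZhang:AdelicLineBundles} converts these analytic inequalities into the algebraic inequalities $-\varepsilon_i \overline{\cD}_0 \leq \overline{\cE}_i - \overline{\cE}_j \leq \varepsilon_i \overline{\cD}_0$ in $\aDiv(\cU)_{\model}$, which is exactly the Cauchy condition for the boundary topology. The limit $\overline{\cE}$ is an adelic divisor, strongly semiample because its defining models are semiample with tropical Fubini--Study metrics, and by construction its analytification recovers $(\cD,g_\cD)$. It remains to check that the assignment $(\cD,g_\cD) \mapsto \overline{\cE}$ is well-defined on classes and is inverse to the analytification map—this is formal given that both constructions pass through the same net of models and that the forward map is fully faithful (as in \cite[Proof of Proposition 3.4.1]{YuanZhang:AdelicLineBundles}). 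Combining these gives \autoref{prop:isom_ssadivisor_semipositive}, hence the desired equivalence of categories.
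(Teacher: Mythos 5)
Your proposal is correct and follows essentially the same argument as the paper: the analytification functor from \autoref{lemma:analytification_ssa_eqvsemip} supplies the forward direction, the reverse is built by passing through divisor class groups via \autoref{prop:classisom_Berkovich} and \autoref{prop:isomclassadelic}, using \autoref{lemma:Greensfunctionsemipositive} to produce the net of tropical Fubini--Study Green's functions, Song's normalized-boundary technique (\autoref{prop:Greens_nonvanishing}, \autoref{prop:alternative_normalized_boundary}) to convert compact convergence on $\cU^{\an}$ into a Cauchy estimate against $\wt{g}_0$, and \cite[Lemma 3.3.3]{YuanZhang:AdelicLineBundles} to translate this into the boundary-topology Cauchy condition, with full faithfulness quoted from \cite[Proposition 3.4.1]{YuanZhang:AdelicLineBundles}. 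One small remark: the paper explicitly reduces to the case $\cD=0$ before forming the quotients by $\wt{g}_0$, which ensures the Green's functions being divided are genuinely continuous on $\cU^{\an}$; your version forms $h_{\cE_i}=g_{\cE_i}/\wt{g}_0$ and $h=g_{\cD}/\wt{g}_0$ individually, which have singularities along $|\cD|^{\an}$ when $\cD\neq 0$, but since your key inequalities are on the differences $g_{\cE_i}-g_{\cE_j}$ (which are Green's functions for the trivial divisor, hence continuous), the argument goes through as written.
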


\begin{proof}
Note that \autoref{prop:isom_ssadivisor_semipositive} and \autoref{prop:inversemap_semipositive_ssa} establish a bijection between the objects in these categories, and fully faithfulness follows from \cite[Proposition 3.4.1]{YuanZhang:AdelicLineBundles}. 
\end{proof}

\section{Applications}
\label{sec:applications}
In this section, we discuss several applications of our constructions and of \autoref{thmx:main0}.

\subsection{Families of Monge--Amp\`ere measures}
\label{subsec:familiesMA}
Let $\cX$ be a projective arithmetic variety of dimension $n$, and let $\cX^{\an} \to \sM(\mZ)$ denote the Berkovich analytification. 
In \cite[Section 4.3]{PilleSchneider:Global}, Pille-Schneider described how one can define a family, as $x$ varies over $\sM(\mZ)$, of Monge--Amp\`ere measures to a finite collection of semiample line bundles $\cL_1^{\an},\dots,\cL_n^{\an}$ equipped with continuous plurisubharmonic (in the sense of \autoref{defn:psh}) metrics $\phi_1,\dots,\phi_n$. We will show how to extend this to the setting where $\cL_i^{\an}$ are semiample line bundles equipped with a continuous semipositive metric (in the sense of \autoref{defn:continuous_semipositive}) on a quasi-projective arithmetic variety.

Let $\cU$ be a quasi-projective arithmetic variety of dimension $n$, let $\pi\colon \cU^{\an} \to \sM(\mZ)$ denote the Berkovich analytification, and let $\cL_1^{\an},\dots,\cL_n^{\an}$ be semiample line bundles on $\cU^{\an}$ equipped with continuous semipositive metrics $\phi_1,\dots,\phi_n$.

Our goal is to define the Monge--Amp\`ere measure fiberwise. 
Recall that for each $\phi_i$, there exists a net of models $(\cX_{i,j},\overline{\cL}_{i,j})_{j\in J}$ such that each $\overline{\cL}_{i,j} = (\cL_{i,j},\phi_{i,j}) \in \aPic(\cX_{i,j}^{\an})_{\eqv,\tFS}$ where $\phi_{i,j}$ compactly converge to $\phi_i$ on $\cU^{\an}$. 
Note that since $\cU$ is quasi-projective, \cite[\href{https://stacks.math.columbia.edu/tag/0B3G}{Tag 0B3G}]{stacks-project} and \autoref{prop:Berkovichproperties}.(1) implies that for any point $x\in \sM(\mZ)$, $\cU^{\an}_{\sH_x}$ is locally compact. 
As such, we see that for each $x\in \sM(\mZ)$, the convergence of $\phi_{i,j,x}$ to $\phi_{i,x}$ is locally uniform. 


First, we describe the construction over Archimedean points of $\sM(\mZ)$. 
By \autoref{lemma:proj_restrictedcontinuoussemipositive} and  \autoref{thm:eqiuvalentcomplexHermitianline}, we have that for a tropical Fubini--Study metric $\phi$ on a semiample line bundle $\cL^{\an}$ over the analytification $\cX^{\an}$ of a projective arithmetic variety $\cX$, the pullback $\phi_x$ of $\phi$ along an Archimedean point $x\in \sM(\mZ)$ is a continuous semipositive metric on $\cL(\mC)$ that is invariant under complex conjugation. 
As such, we can use the above observation concerning the locally uniform convergence to define Monge--Amp\`ere measures over these points via constructions from \cite[Theorem 2.1]{BedforTaylor:CapacityPSH} or \cite[Corollary 1.6]{Demailly:MAOperators}.

Over non-Archimedean points $x\in \sM(\mZ)$, we could follow the strategy of \cite[Section 3.6.7]{YuanZhang:AdelicLineBundles}, which we will briefly outline. 
First, we note that if $x$ corresponds to a non-trivial valuation, then one can show that the pullback metrized line bundle $(\cL_x^{\an},\phi_{i,x})$ is a strongly nef compactified metrized line bundle\footnote{In this work, we will not define strongly nef compactified metrized line bundles as they are not a relevant concept for us. We refer the reader to \cite[Section 3.6.6]{YuanZhang:AdelicLineBundles} for details.} over $\cU^{\an}_{\sH_x}$ via the same argument as \cite[Lemma 3.6.7]{YuanZhang:AdelicLineBundles}. As such, we can define the Monge--Amp\`ere measure as they do in this setting. 
For our purposes, we want to define the Monge--Amp\`ere measure on the fiber of \textit{any} non-Archimedean point of $\sM(\mZ)$. 

For a general non-Archimedean point $x\in \sM(\mZ)$,  the previous observation concerning the locally uniform convergence tells us that it suffices to study the tropical Fubini--Study metric setting i.e., when $\phi_{i,x}$ is a tropical Fubini--Study metric. 
In this case, we have that $\phi_{i,x}$ is locally psh-approachable (cf.~\cite[Corollaire 6.3.4]{ChambertLoirDucros:FormesDifferentielles} or \cite[Lemma 2.9]{BoucksomJonsson:SingularPSH}). 
As such,  \cite[Corollaire 5.6.5]{ChambertLoirDucros:FormesDifferentielles} shows that there is a canonical measure
\[
c_1(\overline{\cL}_{1,x}^{\an})\cdots c_1(\overline{\cL}_{n,x}^{\an}) = d'd''(-\log\nrm{\cdot}_{1,x})\wedge \cdots \wedge d'd''(-\log\nrm{\cdot}_{n,x}),
\]
over $\cU^{\an}_{\sH_x}$ which is defined by the following weak convergence process.

For any $i = 1,\dots,n$,  \autoref{lemma:proj_restrictedcontinuoussemipositive} implies that $\phi_{i,x}$ is a continuous semipositive metric in the appropriate setting. 
Furthermore, the uniform description of such metrics implies that $\phi_{i,x}$ on $\cL_{i,x}^{\an}$ is defined as the limit of tropical Fubini--Study metrics induced by projective models $(\cX_{i,j,\sH_x},\cL_{i,j,x})$ of $(\cU_{\sH_x},\cL_{i,x})$ over $\Spec(\sH_x)$. 
We may and do assume that $\cX_{i,j,\sH_x}$ is independent of $i$, and simply write this as $\cX_{j,\sH_x}$. Also, denote by $\overline{\cL}_{i,j,x} = (\cL_{i,j,x},\phi_{i,j,x})$ the metrized line bundle as above where each $\phi_{i,j,x}$ is a tropical Fubini--Study metric. 
Let $C_c(\cU^{\an}_{\sH_x})$ denote the space of real-valued continuous and compactly supported functions on $\cU^{\an}_{\sH_x}$. 
Then, this construction gives that for any $f\in C_c(\cU^{\an}_{\sH_x})$,
\begin{equation}\label{eqn:MAmeasure}
\int_{\cU^{\an}_{\sH_x}} f\, c_1(\overline{\cL}^{\an}_{1,x})\cdots c_1(\overline{\cL}^{\an}_{n,x})\coloneqq \lim_{j\to \infty} \int_{\cX_{j,\sH_x}^{\an}} f\, c_1(\overline{\cL}_{1,j,x}^{\an})\cdots c_1(\overline{\cL}_{n,j,x}^{\an}). 
\end{equation}
Since $\cX_{j,\sH_x}$ is projective over $\Spec(\sH_x)$, the right hand side is equal to the integration defined by the global intersection numbers by \cite{ChambertLoirDucros:FormesDifferentielles}. 

Now following \cite{PilleSchneider:Global}, we define a family of Monge--Amp\`ere measures. 

\begin{definition}\label{defn:MAmeasure_quasiproj}
Let $\cU$ be a quasi-projective arithmetic variety of dimension $n$, and let $\overline{\cL}_1^{\an},\dots, \overline{\cL}_n^{\an}$ be semiample line bundles on $\cU^{\an}$ equipped with continuous semipositive metrics.  
We define the associated family of Monge--Amp\`ere measures as follows:~for any point $x\in \sM(\mZ)$, let $F_x\colon \cU_{\sH_x}^{\an} \to \cU^{\an}$ denote the base change map. 
We set
\[
(c_1(\overline{\cL}_1^{\an})\cdots c_1(\overline{\cL}_n^{\an}))_x \coloneqq (F_x)_*(c_1(\overline{\cL}^{\an}_{1|\cU_{\sH_x}^{\an}})\cdots c_1(\overline{\cL}^{\an}_{n|\cU_{\sH_x}^{\an}}))
\]
to be the pushforward measure of the measure from \eqref{eqn:MAmeasure} along $F_x$ (cf.~\autoref{notion:basechange}). 
\end{definition}

\begin{remark}
By \autoref{lemma:proj_continuoussemipositive_continuouspsh}, our definition of continuous semipositive metric and the definition of continuous plurisubharmonic agree on a projective arithmetic variety, and so this new construction will recover that of \cite[Definition 4.8]{PilleSchneider:Global}. 
\end{remark}

\subsection{Invariant adelic line bundles}
\label{subsec:invariant_adelic}
Next, we will prove that the invariant adelic line bundle associated to a polarized dynamical system over a flat and quasi-projective arithmetic variety is a strongly semiample line bundle (\autoref{defn:ssalinebundles}), and hence the analytification is a norm-equivariant, continuous semipositive metrized line bundle on the associated Berkovich space. 

First, we need a definition and a construction.

\begin{definition}\label{defn:PDS}
A \cdef{polarized dynamical system} over an integral scheme $S$ is a triple $(X,f,L)$ such that:
\begin{enumerate}
\item $X$ is an integral, projective, and flat $S$-scheme,
\item $f\colon X \to X$ is a morphism over $S$,
\item $L\in \Pic(X)_{\mQ}$ is a $\mQ$-line bundle on $X$ that is relatively ample over $S$ such that $f^*L \cong qL$ for some rational number $q>1$. 
\end{enumerate}
\end{definition}

Next, we recall Yuan and Zhang's construction of the $f$-invariant adelic line bundle from \cite[Section 6.1.1]{YuanZhang:AdelicLineBundles}. 
Let $S$ be a quasi-projective arithmetic variety, and let $(X,f,L)$ be a polarized dynamical system over $S$. 
Choose a projective model $\cX \to \cS$ of $X\to S$ and a $\mQ$-Hermitian line bundle $\overline{\cL} = (\cL,\nrm{\cdot})$ such that $(\cX_S,\cL_S) \cong (X,L)$. 
For each positive integer $i$, consider the composition $X \xrightarrow{f_i} X \to \cX$, and let $f_i\colon \cX_i \to \cX$ denote the normalization of this map. Denote by $\pi_i\colon \cX_i \to \cS$ the induced map and let $\overline{\cL}_i = q^{-i}f_i^*\overline{\cL}$, which is an element in $\aPic(\cX_i)_{\mQ}$.

In \cite[Section 6.1.1]{YuanZhang:AdelicLineBundles}, the authors enhance the tuple $(\cX_i,\overline{\cL}_i)$ to form an adelic line bundle in $\acPic(X)_{\mQ}$ as follows. 
Note that $X$ is quasi-projective over $\Spec(\mZ)$ by \cite[\href{https://stacks.math.columbia.edu/tag/0C4M}{Tag 0C4M}]{stacks-project}, and that the morphism $f\colon X \to X$ extends to a morphism $f_{\cX}\colon \cX \to \cX$ such that the isomorphism $f^*L \to qL$ also extends to an isomorphism $f_{\cX}^*\cL \to q\cL$. 
By considering the isomorphism
\[
\ell\colon \cL \to q^{-1}f_{\cX}^*\cL
\]
in $\Pic(X)_{\mQ}$ and successively applying $q^{-1}f_{\cX}^*$ to $\ell$, we obtain canonical isomorphisms $\ell_i\colon \cL \to \cL_i$ in $\Pic(X)_{\mQ}$. 
Yuan and Zhang prove that the extra data of this sequence of rational maps $(\ell_i)$ induces an adelic line bundle. 

\begin{theorem}[\protect{\cite[Theorem 6.1.1]{YuanZhang:AdelicLineBundles}}]
\label{thm:invariantadelic}
Let $S$ be a quasi-projective arithmetic variety, and let $(X,f,L)$ be a polarized dynamical system over $S$. 
Fix an isomorphism $f^*L \to qL$ in $\Pic(X)_{\mQ}$ with $q>1$. The above sequence 
\[
(\cL,(\cX_i,\overline{\cL}_i,\ell_i)_{i\geq 1})
\]
converges to an object $\overline{L}_f$ in $\acPic(X)_{\mQ}$. 
The adelic line bundle $\overline{L}_f$ is uniquely determined by the tuple $(S,X,f,L)$ over $\mZ$ and $f^{*}L \to qL$, up to isomorphism, and is $f$-invariant in the sense that $f^*\overline{L}_f \cong q\overline{L}_f$ in $\aPic(X)_{\mQ}$. 
\end{theorem}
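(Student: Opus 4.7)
The argument is a non-Archimedean, adelic analog of Tate's telescoping construction for canonical heights, and I would organize the proof in three steps.

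\textbf{Cauchy property.} Set $\overline{M}_i := \widehat{\ddiv}(\ell_{i+1}\ell_i^{-1}) \in \aDiv(X)_{\model,\mQ}$, so that verifying the defining Cauchy condition amounts to showing $\overline{M}_i \to 0$ geometrically in the boundary topology. Unwinding the recursive definition $\ell_{i+1} = q^{-1}f_\cX^*(\ell_i)$ and passing to a common model dominating all $\cX_j$ yields the identity
$$\overline{M}_i = q^{-i}(f^i)^*\widehat{\ddiv}(\ell)$$
in $\aDiv(X)_{\model,\mQ}$, where $\ell: \cL \to q^{-1}f_\cX^*\cL$ is the initial isomorphism. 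Because $\ell$ is an isomorphism on $X$, the integral part of $\widehat{\ddiv}(\ell)$ on $X$ vanishes, so its Green's function $g$ is continuous on $\cX^{\an}$ and bounded on the compact subspace $\cX^{\beth}$ (\autoref{prop:Berkovichproperties}). Fix a boundary divisor $(\cX_0,\overline{\cD}_0)$ of $X$ with analytic Green's function $\wt{g}_0$. The key technical input is a uniform estimate $q^{-i}|g\circ f^i| \leq C q^{-i}\wt{g}_0$ on $X^{\an}$, which translates via \cite[Lemma 3.3.3]{YuanZhang:AdelicLineBundles} into the boundary-norm bound $-Cq^{-i}\underline{\cD}_0 \leq \overline{M}_i \leq Cq^{-i}\underline{\cD}_0$. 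Telescoping then produces a Cauchy sequence and the limit $\overline{L}_f \in \aPic(X)_{\mQ}$.

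\textbf{Uniqueness.} Any two choices of initial data produce adelic line bundles whose difference $\overline{M}$ has trivial underlying line bundle on $X$ and satisfies $f^*\overline{M} \cong q\overline{M}$ in $\aPic(X)_{\mQ}$. Represent $\overline{M}$ by its norm-equivariant continuous Green's function $h$ on $X^{\an}$; the equation reads $h\circ f = qh$. Since $X^{\beth}$ is compact and $f$-stable, $h$ is bounded there, and iterating $h = q^{-n}(h\circ f^n)$ with $q>1$ forces $h \equiv 0$ on $X^{\beth}$. Norm-equivariance and \autoref{prop:alternative_normalized_boundary} then propagate this vanishing to all of $X^{\an}$.

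\textbf{$f$-invariance.} The sequence $(f^*\cL,(\cX_{i+1}, f_\cX^*\overline{\cL}_i, f_\cX^*\ell_i))_{i\geq 1}$ representing $f^*\overline{L}_f$ is cofinal with the sequence representing $q\overline{L}_f$ via the canonical relation $f_\cX^*\overline{\cL}_i = q\overline{\cL}_{i+1}$, so the two limits agree in $\aPic(X)_{\mQ}$.

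\textbf{Main obstacle.} The difficulty concentrates in the uniform geometric bound asserted in the first step. A naive iterated estimate $f_\cX^*\underline{\cD}_0 \leq K\underline{\cD}_0$ could have $K \geq q$, defeating the $q^{-i}$ decay under iteration. The resolution I would pursue is to argue directly at the level of Green's functions on $X^{\an}$: the function $g$ vanishes on the $f$-invariant interior part $X^{\beth}$, and $\wt{g}_0$ vanishes exactly there by \autoref{prop:Greens_nonvanishing}, so the ratio $g/\wt{g}_0$ extends to a bounded continuous function on the normalized boundary $\wt{X}^b$ via \autoref{prop:alternative_normalized_boundary}. Since $f$ preserves $X^{\beth}$, the composition $g\circ f^i$ is then uniformly bounded by a constant multiple of $\wt{g}_0$ independently of $i$, giving the desired geometric decay $q^{-i}(g\circ f^i) \leq Cq^{-i}\wt{g}_0$.
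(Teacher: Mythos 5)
Note first that the paper does not prove this statement; \autoref{thm:invariantadelic} is quoted directly from \cite[Theorem 6.1.1]{YuanZhang:AdelicLineBundles} and used as a black box. Evaluated on its own, your proposal correctly lays out the Tate telescoping framework, the identity $\overline{M}_i = q^{-i}(f^i)^*\widehat{\ddiv}(\ell)$, and the $f$-invariance via cofinality, but there is a genuine gap at exactly the point you flag as the ``main obstacle'', and the resolution you sketch does not close it. You argue that since $g$ vanishes on the $f$-stable set $X^{\beth}$ and $\wt{g}_0$ vanishes precisely there, the ratio $(g\circ f^i)/\wt{g}_0$ extends continuously to the compact space $\wt{X}^b$, and then conclude that $g\circ f^i \leq C\,\wt{g}_0$ with $C$ independent of $i$. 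Compactness of $\wt{X}^b$ only yields a bound $g\circ f^i \leq C_i\,\wt{g}_0$ with $C_i$ depending on $i$; it says nothing about $\sup_i C_i$. Concretely, for $x\in X^b$ with $f^i(x)\in X^b$, the estimate $g \leq C\,\wt{g}_0$ gives $g(f^i(x)) \leq C\,\wt{g}_0(f^i(x))$, and unless you can bound $\wt{g}_0\circ f^i \leq \lambda^i\,\wt{g}_0$ with $\lambda < q$, the series $\sum_i q^{-i}(f^i)^*\widehat{\ddiv}(\ell)$ need not converge in the boundary norm. Your detour through $X^{\beth}$ and $\wt{X}^b$ simply restates, in Green's-function language, the potential $\lambda\geq q$ failure you warned about; it does not rule it out. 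Your uniqueness step has the parallel gap: vanishing of $h$ on $X^{\beth}$ does not propagate to $X^{\an}$ by norm-equivariance alone, since points of $X^b$ are not norm-equivalent to points of $X^{\beth}$.

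The missing ingredient is the hypothesis in \autoref{defn:PDS} that $f\colon X\to X$ is a morphism \emph{over} $S$, which your proposal never invokes. Because $X\to S$ is projective and $f$ commutes with the structure map, one may choose a projective model $\pi\colon\cX\to\cS$ and a boundary divisor of the form $\overline{\cD}_0 = \pi^*\overline{\cE}_0$ with $\overline{\cE}_0$ a boundary divisor of $S$. Then, in $\aDiv(X)_{\model}$,
\[
f^*\overline{\cD}_0 = f^*\pi^*\overline{\cE}_0 = (\pi\circ f)^*\overline{\cE}_0 = \pi^*\overline{\cE}_0 = \overline{\cD}_0,
\]
equivalently $\wt{g}_0\circ f^{\an} = \wt{g}_0$ on the nose. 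With this $f$-invariant choice of boundary data, the inequality $-c\overline{\cD}_0 \leq \widehat{\ddiv}(\ell) \leq c\overline{\cD}_0$ pulls back verbatim under $(f^i)^*$, and dividing by $q^i$ gives the geometric decay $\nrm{\overline{M}_i}_{\overline{\cD}_0} \leq cq^{-i}$; this is what actually makes the telescope converge. The same fact also repairs uniqueness: bound $|h|\leq C\,\wt{g}_0$ on all of $X^{\an}$, iterate $h = q^{-n}(h\circ f^n)$, and use $\wt{g}_0\circ f^n = \wt{g}_0$ to conclude $|h|\leq Cq^{-n}\wt{g}_0 \to 0$. In short, the $S$-morphism structure of $f$ is not decorative; it is precisely what tames the boundary under iteration, and without it the telescoping argument is incomplete.
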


Our contribution to this construction is the following. 

\begin{prop}\label{prop:invariant_ssa}
Keep the notation from \autoref{thm:invariantadelic}. 
The $f$-invariant adelic line bundle $\overline{L}_f$ is semiample (\autoref{defn:ssalinebundles}). 
If $S$ has an affine quasi-projective model over $\mZ$, then $\overline{L}_f$ is strongly semiample. 
\end{prop}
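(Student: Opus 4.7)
The plan is to exhibit explicit Cauchy representations of $\overline{L}_f$ (respectively of $a\overline{L}_f + \overline{\cM}$) that witness the respective conditions. The strongly semiample assertion hinges on arranging the underlying model line bundle $\cL$ to be ample on a chosen projective model, while the general semiample assertion requires a more delicate choice of Hermitian metric on $\cL$ so that a one-parameter family of sums becomes manifestly pure global tropical Fubini--Study after truncating the representing sequence.

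For the strongly semiample statement, use the affine model hypothesis to produce a projective model $\cS$ of $S$ together with an effective ample divisor $D_\cS$ on $\cS$ whose support is $\cS \setminus \cS_0$ for some affine open $\cS_0$ containing $S$. Starting from any integral extension $\cL_0$ of $L$ to a projective model $\cX \to \cS$, form $\cL := \cL_0 + n \pi^* D_\cS$: since $\cL_0$ is relatively ample over $\cS$ and $D_\cS$ is ample on $\cS$, the line bundle $\cL$ is ample on $\cX$ for $n$ sufficiently large by the standard relatively-ample-plus-pullback-of-ample principle, and since $\pi^*D_\cS$ is supported in $\cX \setminus X$, the restriction $\cL|_X = L$ is preserved. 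Choose global sections $s_1,\dots, s_J$ of $m\cL$ (for some $m$) without common zeros and set $\phi_\cL := m^{-1}\max_j \log|s_j \otimes 1_\mC|$, a conjugation-invariant pure global tropical Fubini--Study metric on $\cL(\mC)$. By \autoref{thm:invariantadelic}, the Cauchy sequence $(\cX_i, q^{-i}f_i^*\overline{\cL})_{i \geq 1}$ represents $\overline{L}_f$, and each term is global tropical Fubini--Study as a $\mQ$-Hermitian line bundle: the underlying $f_i^*\cL$ is the pullback of an ample line bundle (hence semiample), and $f_i^*\phi_\cL = m^{-1}\max_j \log|f_i^*s_j \otimes 1_\mC|$ is manifestly of pure tropical Fubini--Study form with no common zeros of $f_i^*s_j$. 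This proves $\overline{L}_f$ is strongly semiample.

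For the general semiample statement, fix a projective model $\cS$ of $S$ and an ample Hermitian line bundle $\overline{\cM}_\cS = (\cM_\cS, \phi_{\cM_\cS})$ on $\cS$ equipped with a pure global tropical Fubini--Study metric; let $\overline{\cM}$ be the strongly semiample adelic line bundle on $X$ obtained by pulling back the constant adelic line bundle $(\cS, \overline{\cM}_\cS)$ along $X \to S \hookrightarrow \cS$. Choose an extension $\cL$ of $L$ to a projective model $\cX \to \cS$ and a positive integer $n_0$ for which $\cL + n_0 \pi^*\cM_\cS$ is ample on $\cX$; equip the latter with a pure global tropical Fubini--Study metric $\psi_{n_0}$ and define
\[
\phi_\cL := \psi_{n_0} - n_0 \pi^*\phi_{\cM_\cS},
\]
a conjugation-invariant continuous Hermitian metric on $\cL(\mC)$. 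Applying \autoref{thm:invariantadelic} to $\overline{\cL} = (\cL, \phi_\cL)$, the $f$-invariant adelic line bundle $\overline{L}_f$ is represented by $(\cX_i, q^{-i}f_i^*\overline{\cL})_{i\geq 1}$, and for each positive integer $a$, the sum $a \overline{L}_f + \overline{\cM}$ is represented by $(\cX_i, aq^{-i}f_i^*\overline{\cL} + \pi_i^* \overline{\cM}_\cS)_{i \geq 1}$ with metric satisfying
\[
a q^{-i}\phi_\cL + \pi^*\phi_{\cM_\cS} = aq^{-i}\psi_{n_0} + (1 - aq^{-i}n_0)\pi^*\phi_{\cM_\cS}.
\]
Precisely when $i \geq i_0(a) := \lceil \log(a n_0)/\log q\rceil$, both coefficients on the right are nonnegative; then the underlying $\mQ$-line bundle $aq^{-i}\cL + \pi^*\cM_\cS = aq^{-i}(\cL + n_0 \pi^*\cM_\cS) + (1 - aq^{-i}n_0)\pi^*\cM_\cS$ is a positive combination of an ample class and a nef (pulled-back ample) class, hence ample on $\cX$ and semiample after pullback via $f_i$, and a standard tensor-product-of-sections computation shows that a positive linear combination of pure global tropical Fubini--Study metrics is itself pure global tropical Fubini--Study. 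Reindexing the tail from $i_0(a)$ onward yields a Cauchy representation of $a\overline{L}_f + \overline{\cM}$ starting at $j = 1$ in which every term is global tropical Fubini--Study, so $a\overline{L}_f + \overline{\cM}$ is strongly semiample for every positive integer $a$, proving $\overline{L}_f$ is semiample.

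The principal obstacle is in the semiample case: in the absence of an affine model of $S$, neither $\cL$ itself nor the line bundles $aq^{-i}\cL + \pi^*\cM_\cS$ for small $i$ need be semiample on $\cX$, and pure global tropical Fubini--Study metrics are closed only under nonnegative-coefficient linear combinations. The decisive trick is to choose $\phi_\cL$ as the formal difference $\psi_{n_0} - n_0 \pi^*\phi_{\cM_\cS}$ so that the $a$- and $i$-dependent sum metric collapses to a positive combination of two fixed pure global tropical Fubini--Study metrics whenever $i \geq i_0(a)$; the remaining step of truncating the Cauchy sequence to obtain an isomorphic adelic line bundle is routine via the standard truncation stability of Cauchy sequences in the boundary topology.
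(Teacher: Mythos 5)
Your proof is correct, and it is considerably more self-contained than the paper's, which for both claims essentially defers to the machinery of \cite[Theorem 6.1.1]{YuanZhang:AdelicLineBundles}. Specifically, the paper handles the strongly semiample case by invoking \cite[\href{https://stacks.math.columbia.edu/tag/01VK}{Tag 01VK}]{stacks-project} to conclude $L$ is ample on $X$ and then cites Stacks project tags for the existence of a semiample projective model over $\mZ$; your explicit construction $\cL = \cL_0 + n\pi^*D_\cS$ (with $D_\cS$ the boundary hyperplane in a projective closure of the affine $\cS_0$) achieves the same thing more concretely. For the semiample case the paper simply says it ``follows from the proof of'' the cited theorem, whereas you spell out the decisive device: fixing $\overline{\cM}$ as the pullback of an ample GTFS Hermitian line bundle on $\cS$ and choosing the metric $\phi_\cL = \psi_{n_0} - n_0\pi^*\phi_{\cM_\cS}$ as a \emph{formal difference} so that, after truncating to the tail $i \geq i_0(a)$, every term of the Cauchy sequence for $a\overline{L}_f + \overline{\cM}$ is a nonnegative rational combination of the two fixed pure GTFS metrics $f_i^*\psi_{n_0}$ and $\pi_i^*\phi_{\cM_\cS}$. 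This is the right idea and makes the argument genuinely transparent. Two small points worth tightening: the phrase ``any integral extension $\cL_0$'' is misleading, since the ``relatively-ample-plus-pullback-of-ample'' step requires $\cL_0$ to be chosen $\pi$-relatively ample on $\cX$ (easily arranged, but not automatic); and you use $f_i^*\pi^* = \pi_i^*$ without comment when collapsing the metric identity, which is fine since $\pi_i = \pi\circ f_i$, but deserves a word. Neither affects the validity of the argument.
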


\begin{proof}
For the second claim, note that when $S$ has an affine quasi-projective model over $\mZ$, the relative ampleness of $L$ over $S$ implies that $L$ is ample on $X$ by \cite[\href{https://stacks.math.columbia.edu/tag/01VK}{Tag 01VK}]{stacks-project}. 
If we are able to choose $(\cX,\cL)$ such that ${\cL}$ is semiample on $\cX$, then every $\cL_i$ is semiample on $\cX_i$ by pullback.
As each semiample line bundle can be endowed with a global tropical Fubini--Study metric, we would have that $\overline{L}_f$ is indeed strongly semiample. 
The existence of a projective model $(\cX,\cL)$ for $(X,L)$ over $\mZ$ such that $\cL$ is (semi)ample follows from \cite[\href{https://stacks.math.columbia.edu/tag/01PZ}{Tag 01PZ}]{stacks-project} and \cite[\href{https://stacks.math.columbia.edu/tag/01Q1}{Tag 01Q1}]{stacks-project}.

The first claim follows from the proof of \cite[Theorem 6.1.1]{YuanZhang:AdelicLineBundles} and noting an ample $\mQ$-line bundle $\cL$ on a projective model $\cX$ of $X$ over $\Spec(\mZ)$ will have a global tropical Fubini--Study metric. 
\end{proof}

As an immediate corollary of \autoref{thmx:main0}, we have the following. 

\begin{corollary}\label{coro:analytic_invariant_ssa}
Keep the notation from \autoref{thm:invariantadelic}. 
If $S$ has an affine quasi-projective model over $\mZ$, then the analytification of $\overline{L}_f$ is a metrized line bundle on $X^{\an}$  equipped with a norm-equivariant, continuous semipositive metric. 
\end{corollary}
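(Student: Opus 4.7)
The plan is to combine the two preceding results directly. By \autoref{prop:invariant_ssa}, under the hypothesis that $S$ has an affine quasi-projective model over $\mZ$, the $f$-invariant adelic line bundle $\overline{L}_f$ lies in the subcategory $\acPic(X)_{\ssa}$ of strongly semiample adelic line bundles on $X$. Here I would just quote that proposition; no additional work on the dynamical side is needed.

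Next I would apply \autoref{thmx:main0}, which asserts an equivalence of categories
\[
\acPic(X)_{\ssa} \cong \acPic(X^{\an})_{\eqv,\semip}.
\]
Under this equivalence, the image of $\overline{L}_f$ is a line bundle on $X^{\an}$ together with a norm-equivariant, continuous semipositive metric (in the sense of \autoref{defn:continuous_semipositive}). Since the analytification map of \autoref{lemma:analytification_ssa_eqvsemip} is precisely the functor realizing one direction of this equivalence, the analytification $\overline{L}_f^{\an}$ carries exactly the claimed metric structure. There is no real obstacle here, as both ingredients have already been proved; the corollary is genuinely a formal consequence. The only thing to be careful about is that everything works with $\mQ$-coefficients, since $\overline{L}_f \in \acPic(X)_{\mQ}$, but the definitions and the equivalence in \autoref{thmx:main0} extend tensorially to $\mQ$-line bundles without change.
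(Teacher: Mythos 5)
Your argument matches the paper's: the corollary follows by applying \autoref{prop:invariant_ssa} to place $\overline{L}_f$ in $\acPic(X)_{\ssa}$ (using the affine hypothesis) and then invoking the equivalence of categories in \autoref{thmx:main0}, exactly as the paper does implicitly when it declares this "an immediate corollary." The remark about $\mQ$-coefficients is a reasonable sanity check and does not change the argument.
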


\subsection{Non-degeneracy criterion over trivially valued field}
\label{subsec:Nondegeneracy_trivially}
Finally, building off Subsections \ref{subsec:familiesMA} and \ref{subsec:invariant_adelic}, we now give a criterion of a subvariety of a variety admitting a polarized dynamical system to be non-degenerate.

First, we recall the definition of a non-degenerate subvariety. 
Let $S$ be an affine quasi-projective arithmetric variety, let $(X,f,L)$ be a polarized dynamical system over $S$, let $\overline{L}_f$ denote the invariant adelic line bundle from \autoref{thm:invariantadelic}, and let $Y$ be a closed subvariety of $X$. 
Let $\overline{M} \coloneqq \overline{L}_{f|Y}$ denote the image of $\overline{L}_f$ under the pullback map
\[
\aPic(X)_{\ssa,\mQ} \to \aPic(Y)_{\ssa,\mQ}. 
\]
Following \cite[Section 6.2.2 \& Lemma 5.4.4]{YuanZhang:AdelicLineBundles}, we say that $Y$ is \cdef{non-degenerate} if for any \textit{place} of $\mQ$ i.e., point $x_{v,1}$ in $\sM(\mZ)$ where $v$ is either a prime number $p$ or $\infty$, we have that 
\[
\int_{Y_{\sH_{x_{v,1}}}} c_1(F_{{x_{v,1}}}^*\overline{L}_{f}^{\an})^{\dim Y} \neq 0
\]
or equivalently, 
\[
c_1(F_{{x_{v,1}}}^*\overline{L}_{f}^{\an})_{|Y_{\sH_{x_{v,1}}}^{\an}}^{\dim Y} \neq 0. 
\]
Yuan and Zhang actually define the notion of non-degenerate using an Arakelov intersection number, however, they show in \cite[Lemma 5.4.4]{YuanZhang:AdelicLineBundles} that the global intersection number can be computed locally as above. 
Their proof crucially uses a global intersection formula \cite[Th\'eor\`eme 1.4]{ChambertLoirThuillier:MahlerMeasure}. 

Recently, Guo \cite{Guo:Integration} showed how to compute this intersection number in a purely local manner over a complete \textit{non-trivially} valued field.
The main obstacle is that the constant function 1 is not compactly supported on the analytification of a quasi-projective arithmetic variety so the result is not immediate. Guo's idea was to use certain cut-off functions, namely difference of plurisubharmonic functions, to relate the weak limit process to a limit of tradition intersection numbers. 

In Subsection \ref{subsec:familiesMA}, we illustrated how one could define Monge--Amp\`ere measures for the pullback of a strongly semiample line bundle $\cL$ over a quasi-projective arithmetic variety $\cU$ along any point of $x\in \sM(\mZ)$, in particular a point corresponding to a trivial valuation. 
This construction was not available previously due to the fact that compactified strongly nef adelic divisors and line bundles were not defined over trivially valued fields. 

We remark that Guo's proof carries over to the setting of an \textit{infinite}, trivially valued non-Archimedean field \textit{mutatis mutandis}\footnote{A few words are in order. First, it is necessary to assume that we work over an infinite field in order to use \cite[Lemma 3.1]{Guo:Integration}. Moreover, Guo's proof uses the theory of differential forms and differential analysis on Berkovich spaces, which was developed in full generality in \cite{ChambertLoirDucros:FormesDifferentielles}. As this theory is valid in the trivially valued setting, Guo's arguments carry over immediately to our setting.} using the weak limit process. 
As such, we have the following minor strengthening and reinterpretation of Guo's result. 

\begin{theorem}[Guo $+\, \varepsilon$]\label{thm:intersection_Guo_epsilon}
Let $\cU$ be a quasi-projective arithmetic variety of dimension $n$. 
For any $n$-tuple of strongly semiample adelic line bundles $\overline{\cL}_1,\dots,\overline{\cL}_n$ and any $x\in \sM(\mZ)^{\eta}$, we have
\begin{equation}\label{eqn:totalmass}
\int_{\cU^{\an}} (c_1(\overline{\cL}_1^{\an})\cdots c_1(\overline{\cL}_n^{\an}))_x = \pwr{ \cL_{1|\cU_{\sH_x}} \cdot \cL_{2|\cU_{\sH_x}} \cdots \cL_{n|\cU_{\sH_x}}}
\end{equation}
where the left hand side is defined as in \autoref{defn:MAmeasure_quasiproj} and the right hand side is computed via a limit of usual intersection numbers as  in \cite[Proposition 4.1.1]{YuanZhang:AdelicLineBundles}.   
\end{theorem}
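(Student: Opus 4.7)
The plan is to reduce the identity to Guo's local intersection formula \cite{Guo:Integration}, with the novelty residing entirely in the case where $x$ corresponds to the trivial valuation on $\mQ$, which lies outside the scope of \cite{Guo:Integration}. By \autoref{defn:ssalinebundles} together with the construction preceding \autoref{defn:MAmeasure_quasiproj}, each $\overline{\cL}_i$ is represented by a Cauchy net $(\cX_{i,j},\overline{\cL}_{i,j})_{j\geq 1}$ of projective models carrying global tropical Fubini--Study Hermitian metrics $\phi_{i,j}$. After dominating by a common net of projective models $\cX_j$ of $\cU$, the base change $\cX_{j,\sH_x}$ equipped with the pullback of $\overline{\cL}_{i,j}$ yields a tropical Fubini--Study model approximation $\phi_{i,j,x}$ of the continuous semipositive metric $\phi_{i,x}$ on $\cL_{i,x}^{\an}$ (by \autoref{lemma:proj_restrictedcontinuoussemipositive}), and the measure $(c_1(\overline{\cL}_1^{\an})\cdots c_1(\overline{\cL}_n^{\an}))_x$ on $\cU_{\sH_x}^{\an}$ is, by construction, the weak limit of the corresponding measures on the projective models.

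First I would split into three cases according to the nature of $\sH_x$. If $x$ is Archimedean, then \autoref{thm:eqiuvalentcomplexHermitianline} identifies each $\phi_{i,x}$ with a continuous semipositive metric in the complex analytic sense, and the formula reduces to the classical Bedford--Taylor / Demailly statement on the open complex variety $\cU_{\sH_x}^{\an}$, combined with a standard cut-off argument controlling the boundary contribution from $\cX_{j,\sH_x}\setminus \cU_{\sH_x}$. If $x$ is a non-trivial non-Archimedean point $x_{p,\varepsilon}$ with $\varepsilon\in (0,\infty)$, then after using Poineau's homeomorphism to normalize $\varepsilon = 1$, the identity is exactly the main theorem of \cite{Guo:Integration}. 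The remaining case---and the only genuinely new one---is the trivial valuation point $x = x_0$ with $\sH_x = (\mQ,|\cdot|_0)$.

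For $x = x_0$, the plan is to adapt Guo's cut-off argument to the trivially valued setting. Fix a boundary divisor $(\cX_0,\overline{\cD}_0)$ for $\cU$ and its analytic Green's function $\wt{g}_{\cD_0}$; by \autoref{prop:Greens_nonvanishing}, $\wt{g}_{\cD_0}$ is nonnegative on $\cU_{\sH_x}^{\an}$ and vanishes precisely on the interior part, so its sublevel sets $\{\wt{g}_{\cD_0}<\varepsilon\}$ form an exhaustion of $\cU_{\sH_x}^{\an}$ by relatively compact open subsets. Building psh-approachable cut-off functions from $\wt{g}_{\cD_0}$ in the spirit of \cite[Lemma 3.1]{Guo:Integration}, one decomposes the difference
\[
\int_{\cX_{j,\sH_x}^{\an}}\! c_1(\overline{\cL}_{1,j,x}^{\an})\cdots c_1(\overline{\cL}_{n,j,x}^{\an}) \; - \; \int_{\cU_{\sH_x}^{\an}}\! (c_1(\overline{\cL}_1^{\an})\cdots c_1(\overline{\cL}_n^{\an}))_x
\]
into a sum of boundary integrals, which are controlled via the integration-by-parts formalism of \cite{ChambertLoirDucros:FormesDifferentielles}. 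Letting first $j\to \infty$ and then the cut-off sharpen, each error term vanishes, and since the left-hand integral equals the algebraic intersection number on the projective model $\cX_{j,\sH_x}$, one recovers \eqref{eqn:totalmass} after passing to the limit over the models.

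The main obstacle is verifying that each analytic step of \cite{Guo:Integration} survives verbatim over the trivially valued field $(\mQ,|\cdot|_0)$. The two substantive prerequisites are the full formalism of $d'd''$-operators, plurisubharmonic functions, and Monge--Amp\`ere products on trivially valued Berkovich spaces---subsumed in the general framework of \cite{ChambertLoirDucros:FormesDifferentielles}---and the production of cut-off functions via \cite[Lemma 3.1]{Guo:Integration}, whose hypothesis requires the ground field to be infinite (satisfied since $\mQ$ is infinite). With both inputs in place, the remainder of Guo's proof is formal manipulation within the Chambert-Loir--Ducros framework and transfers \emph{mutatis mutandis}, completing the plan.
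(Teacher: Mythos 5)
Your proposal is correct and takes essentially the same route as the paper: both reduce to the fiberwise statement, invoke Guo's theorem for non-trivially valued fields, and observe that the only genuinely new case is the trivially valued point $x_0$, where Guo's argument transfers because the Chambert-Loir--Ducros formalism is valid over trivially valued fields and Guo's cut-off lemma (Lemma 3.1) only requires the ground field to be infinite. The only thing the paper adds that you omit is a short remark that the fiberwise intersection number is independent of $x\in \sM(\mZ)^{\eta}$ by flatness of the projective $\mZ$-models, a fact used later in the non-degeneracy application but not strictly required by the equality \eqref{eqn:totalmass} as stated.
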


\begin{proof}
It suffices to prove that for each $x\in \sM(\mZ)^{\eta}$, we have that 
\[
\int_{\cU^{\an}_{\sH_x}}  (c_1(\overline{\cL}^{\an}_{1|\cU_{\sH_x}^{\an}})\cdots c_1(\overline{\cL}^{\an}_{n|\cU_{\sH_x}^{\an}})) = \pwr{ \cL_{1|\cU_{\sH_x}} \cdot \cL_{2|\cU_{\sH_x}} \cdots \cL_{n|\cU_{\sH_x}}}
\]
and that this quantity is independent of $x$.

The equality claim follows from \cite[Theorem 1.2]{Guo:Integration} with the observation that his arguments carry over to the setting of an infinite, trivially valued field.  
The fact that the quantity is independent of $x\in \sM(\mZ)^{\eta}$ follows because the right hand side is independent of $x$ as it is defined via a limit of usual intersection numbers on projective $\sH_x$-varieties and this latter quantity is independent of $x$ by flatness of the projective $\mZ$-models in the definition of strongly semiample adelic line bundle. 
\end{proof}

%
%
%
%

The (potential) utility of \autoref{thm:intersection_Guo_epsilon} is that it allows one to give a criterion for a subvariety of a polarized dynamical system to be non-degenerate involving analytifications over trivially valued fields.

\begin{prop}\label{prop:nondegenerate_trivially}
Let $S$ be an affine quasi-projective arithmetric variety, let $(X,f,L)$ be a polarized dynamical system over $S$, and let $\overline{L}_f$ denote the invariant adelic line bundle from \autoref{thm:invariantadelic}, whose analytification lies in $\acPic(X^{\an})_{\eqv,\semip,\mQ}$ by \autoref{coro:analytic_invariant_ssa}. 

If $Y$ is a closed subvariety of $X$ such that 
\[
\int_{Y^{\an}} c_1(\overline{L}_{f})_{x_0}^{\dim Y} = \int_{Y_{\sH_{x_{0}}}^{\an}} c_1(F_{x_0}^*\overline{L}_{f})^{\dim Y}\neq 0,
\]
then $Y$ is non-degenerate. 
\end{prop}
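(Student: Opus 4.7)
The plan is to reduce the proposition to \autoref{thm:intersection_Guo_epsilon} by observing that the trivial absolute value $x_0$ and every place $x_{v,1}$ are Zariski dense points of $\sM(\mZ)$, and that the intersection number over Zariski dense points is independent of the chosen point.

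First I would verify that $\overline{M} := \overline{L}_{f|Y}$ is a strongly semiample adelic line bundle on $Y$ (viewed as a quasi-projective arithmetic variety). Since $\overline{L}_f$ is strongly semiample by \autoref{prop:invariant_ssa}, there exists a Cauchy sequence of projective models $(\cX_i, \overline{\cL}_i)$ witnessing this, with each $\overline{\cL}_i$ a global tropical Fubini--Study Hermitian line bundle. Taking a projective model $\cY_i$ of $Y$ inside each $\cX_i$ and pulling back, the pullback of a semiample line bundle remains semiample, and a global tropical Fubini--Study metric of the form $m^{-1}\max_j(\log|s_j| + \lambda_j)$ restricts to the same kind of expression on $\cY_i$ provided the restricted sections have no common zeros on $\cY_i$, which one arranges by passing to a large enough twist. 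This gives the strong semiampleness of $\overline{M}$.

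Next I would apply \autoref{thm:intersection_Guo_epsilon} with $\overline{\cL}_1 = \cdots = \overline{\cL}_{\dim Y} = \overline{M}$ on the quasi-projective arithmetic variety $Y$. This yields that for every $x \in \sM(\mZ)^{\eta}$,
\[
\int_{Y_{\sH_x}^{\an}} c_1(F_x^*\overline{L}_f^{\an})^{\dim Y} = (M_{|Y_{\sH_x}})^{\dim Y},
\]
and crucially that this quantity is independent of the choice of Zariski dense $x$. By the description in \autoref{exam:BerkovichoverZ}, the kernel map $\iota : \sM(\mZ) \to \Spec(\mZ)$ sends every point except the $x_{p,\infty}$ to the generic point, so in particular $x_0, x_{\infty,1}$, and every $x_{p,1}$ lie in $\sM(\mZ)^{\eta}$.

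Combining these, the hypothesis $\int_{Y_{\sH_{x_0}}^{\an}} c_1(F_{x_0}^*\overline{L}_f^{\an})^{\dim Y} \neq 0$ forces $\int_{Y_{\sH_{x_{v,1}}}^{\an}} c_1(F_{x_{v,1}}^*\overline{L}_f^{\an})^{\dim Y} \neq 0$ at every place $v$, since all of these equal the same intersection number. Because the Monge--Amp\`ere measure $c_1(F_{x_{v,1}}^*\overline{L}_f^{\an})_{|Y_{\sH_{x_{v,1}}}^{\an}}^{\dim Y}$ is a positive measure (being a weak limit of positive measures defined by tropical Fubini--Study approximants, as in Subsection \ref{subsec:familiesMA}), its non-vanishing total mass implies the measure itself is non-zero, which is precisely the non-degeneracy condition. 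I expect the only real subtlety to be the verification of strongly semiampleness under restriction to $Y$; no serious obstacle is anticipated since \autoref{thm:intersection_Guo_epsilon} already packages the analytic content needed.
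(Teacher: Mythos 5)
Your proposal is correct and unpacks exactly the argument the paper compresses into ``This follows immediately from \autoref{thm:intersection_Guo_epsilon} and definitions'': namely, that $x_0$ and every $x_{v,1}$ lie in $\sM(\mZ)^{\eta}$, and \autoref{thm:intersection_Guo_epsilon} shows the total mass is constant over such points, so non-vanishing at $x_0$ propagates to all places. One small remark: the ``large enough twist'' step in your verification of strong semiampleness of the restriction is unnecessary, since a common zero of the restricted sections on $\cY_i \subset \cX_i$ would already be a common zero on $\cX_i$.
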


\begin{proof}
This follows immediately from \autoref{thm:intersection_Guo_epsilon} and definitions. 
\end{proof}

\begin{remark}
To the author's knowledge, results in the literature proving that a subvariety is non-degenerate have only utilized complex analytic techniques. 
These work use that non-degeneracy can be characterized using the Betti form in the setting of a subvariety of an abelian scheme \cite{DGH:Uniform} or the bifurification measure in the setting of a hypersurface in a polarized dynamical system \cite{DemarcoMavraki:DynamicsP1}. 
In the non-Archimedean, non-trivially valued case, we mention a result of Gubler \cite{Guber:BogomolovConjecture} where he proves a tropical equidistribution result for a totally degenerate abelian variety; here the equilibirum measure is the pushforward to the abelian variety of the probability Haar measure on the canoncial skeleton, which is a real torus.  

When the polarized dynamical system has good reduction at some place or when one works over a trivially valued field, the equilibrium measure is the Dirac measure of a Shilov point on the associated Berkovich space, in particular the Shilov point corresponding to the model witnessing good reduction.  
In this setting, the equilibrium measure is ``small'', and so it is very difficult to deduce non-degeneracy of a subvariety as the measure does not detect much. 
That being said, in recent years, there have been major advances in pluripotential theory over trivially valued fields \cite{BoucksomJonsson:GlobalPluriTriv}, which have had substantial applications (see e.g., \cite{BoucksomJonsson:NAKStability1, BoucksomJonsson:NAKStability2}). 
While we do not have an immediate application in mind, we do hope that \autoref{prop:nondegenerate_trivially} will have applications to non-degeneracy results in the future.
\end{remark}

\section{A global Monge--Amp\`ere measure}
\label{sec:globalMA}
To conclude, we define a notion of a global Monge--Amp\`ere measure on the analytification of a quasi-projective arithmetic variety. 
We note that Song \cite[Section 6]{Song:EquivariantAdelic} has already defined a global Monge--Amp\`ere measure using his results and the normalized boundary of the analytification of a quasi-projective arithmetic variety. The integration pairing from his contruction is induced by an extension of Yuan and Zhang's variant of the Arakelov intersection pairing to the quasi-projective setting \cite[Section 4.1]{YuanZhang:AdelicLineBundles}. 

We will take a different approach and define the global Monge--Amp\`ere measure using families of Monge--Amp\`ere measures we constructed in Subsection \ref{subsec:familiesMA}. 

\subsection{A measure on $\sM(\mZ)$}
We would like to endow $\sM(\mZ)$ with a measure. 
First, we recall that in \autoref{exam:BerkovichoverZ}, we identified the non-Archimedean branches with $[0,+\infty]_p$. We could also identify them with $[0,1]_p$ in which $|\cdot|_{p,\varepsilon}$ is the non-Archimedean absolute value for which $|p|_{p,\varepsilon} = \varepsilon$ for all $\varepsilon\in (0,1)$. 
In fact, this is how Berkovich presented $\sM(\mZ)$ in \cite[Example 1.4.1]{BerkovichSpectral}. Taking limits, we declare that $|\cdot|_{p,0} = |\cdot|_{p,\infty}$ in the previous notation and $|\cdot|_{p,1} = |\cdot|_{p,0} = |\cdot|_0$. 
With this identification, define the branches
\begin{align*}
I_{p} &= \{ |\cdot|_{p,\varepsilon} : \varepsilon \in [0,1]\},\\
I_{\infty} &= \{ |\cdot|_{\infty,\varepsilon} : \varepsilon \in [0,1]\}
\end{align*}
Similar to in \autoref{exam:BerkovichoverZ}, we have that the maps $f_{v}\colon [0,1] \to\sM(\mZ)$ defined by $\varepsilon \mapsto |\cdot|_{p,\varepsilon}$ are homeomorphisms onto $I_v$. 
With this, we now identify the non-Archimedean branches of $\sM(\mZ)$ with $[0,1]_{p}$.

Next, we briefly comment on the topology of $\sM(\mZ)$. 
By definition, $\sM(\mZ)$ carries the weakest topology making the maps $|\cdot| \mapsto |n|$ continuous, and since the absolute values are multiplicative, we have that it suffices to say that the maps $|\cdot|\mapsto |p|$ are continuous for all primes $p$. 
As such, the topology on $\sM(\mZ)$ is generated by the sets
\begin{align*}
U_{p,t}^+ &= \{ |\cdot| : |p| > t\},\\
U_{p,t}^- &= \{ |\cdot| : |p| < t\}. 
\end{align*}
Let $\cB(\sM(\mZ))$ denote the $\sigma$-algebra of Borel sets on $\sM(\mZ)$. 
For later use, we identify an element of $\cB(\sM(\mZ))$ not appearing above, namely
\begin{align*}
A_{p,t_1,t_2} &:= (\sM(\mZ)\setminus U_{p,t_1}^+) \cap (\sM(\mZ)\setminus U_{p,t_2}^-) = \{|\cdot| : t_1 \geq |p| \geq t_2\},
\end{align*}
which we can think of as an annulus type object.  We define our measure as follows:~for $E\in \cB(\sM(\mZ))$, we have that 
\begin{equation}\label{eqn:measuremuprime}
\mu'(E) = \sum_{v\in M_{\mQ}} \frac{\ell(f_{v}^{-1}(E \cap I_v))}{v \log(v)}
\end{equation}
where $\ell(\cdot)$ denote the usual length of a sub-interval in $[0,1]$ and when $v = \infty$, we set $v \log(v) = e$ and when $E \cap I_v = \emptyset$, we set $\ell(\emptyset) = 0$. 

We claim that this is a finite measure. 
Writing 
\[
\sM(\mZ) = U_{p,0}^+ \sqcup U_{p,0}^- \sqcup A_{p,0,0}
\]
we compute that $\mu'(U_{p,0}^-) = \mu'(A_{p,0,0}) = 0$ as $U_{p,0}^- = \emptyset$ and $A_{p,0,0} = x_{p,0}$.
Furthermore
\[
\mu'(U_{p,0}^+) = \pwr{\sum_{p \in M_{\mQ}\setminus \infty} \frac{1}{p\log(p)}} + \frac{1}{e} < \infty
\]
since the summation $\sum_{p}1/p\log(p)$ converges using the prime number theorem. 
Note that by conventions, we have that $\ell((0,1]) = \ell([0,1]) = 1$; this only occurs at $p$ since $f_{p}^{-1}(U_{p,0}^+ \cap I_p) = (0,1]$. 
Therefore, $\mu'$ is a finite measure, and hence we may define a probability measure $\mu$ on $ \sM(\mZ)$ via 
\begin{equation}\label{eqn:measuremu}
\mu \coloneqq \frac{1}{\mu'(\sM(\mZ))}\mu'.
\end{equation}

\subsection{Construction of global Monge--Amp\`ere measure as a pullback measure}
With the measure defined above, we will construct a global Monge--Amp\`ere measure by using a ``pullback'' measure construction.  
More precisely, we define a measure on the Berkovich analytification of a quasi-projective arithmetic variety using the measure $\mu$ on the base $\sM(\mZ)$ and the fact that each fiber over $x \in \sM(\mZ)$ comes equipped with a measure, namely the Monge--Amp\`ere measure constructed in Subsection \ref{subsec:familiesMA}. 

\begin{definition}
Let $\cU$ be a quasi-projective arithmetic variety of dimension $n$, let $\cL_1,\dots,\cL_n$ be semiample line bundles on $\cU$ equipped with strongly semiample metrics $\phi_1,\dots,\phi_n$, and let $\pi\colon \cU^{\an} \to \sM(\mZ)$ denote the Berkovich analytification.

Let $f\in C_c(\cU^{\an})$. 
The \cdef{global Monge--Amp\`ere measure with respect to $\mu$} on $\cU^{\an}$ associated to the the metrized line bundles $\overline{\cL}_1,\dots,\overline{\cL}_n$ is 
\[
\int_{\cU^{\an}} f \, c_1(\overline{\cL}^{\an}_1)\cdots c_1(\overline{\cL}^{\an}_n) \coloneqq \int_{\sM(\mZ)} \left( \int_{\cU_{\sH_x}^{\an}} f_{|\cU_{\sH_x}^{\an}} \, (c_1(\overline{\cL}^{\an}_1)\cdots c_1(\overline{\cL}^{\an}_n))_x\right)\, d\mu
\]
where $(c_1(\overline{\cL}_1^{\an})\cdots c_1(\overline{\cL}_n^{\an}))_x$ denote the Monge--Amp\`ere measures from \autoref{defn:MAmeasure_quasiproj} and $\mu$ is the measure on $\sM(\mZ)$ defined in \eqref{eqn:measuremu}. 
\end{definition}

\begin{example}
Let $\cX$ be a projective arithmetic variety of dimension $n$, let $\cL_1,\dots,\cL_n$ be semiample line bundles on $\cX$ equipped with strongly semiample metrics $\phi_1,\dots,\phi_n$, and let $\pi\colon \cX^{\an} \to \sM(\mZ)$ denote the Berkovich analytification. 
Note that since $\cX \to \Spec(\mZ)$ is flat, the intersection numbers are constant in a family, and hence we
\[
\int_{\cX^{\an}} c_1(\overline{\cL}^{\an}_1)\cdots c_1(\overline{\cL}^{\an}_n)  = \int_{\sM(\mZ)} (\cL_{1|\cX^{\an}_{\sH_x}}^{\an} \cdots \cL_{n|\cX^{\an}_{\sH_x}}^{\an}) \, d\mu = (\cL_{1|\cX_{\sH_x}} \cdots \cL_{n|\cX_{\sH_x}}).
\]
\end{example}

\begin{remark}
\begin{enumerate}
\item[]
\item As with Song's construction from \cite[Section 6]{Song:EquivariantAdelic}, our global Monge--Amp\`ere measure is non-canonical as it depends on the choice of measure on $\sM(\mZ)$. 
\item It would be interesting to compute the total mass of the global Monge--Amp\`ere measure in the quasi-projective setting. To do so, one would need to globalize the arguments of Guo \cite{Guo:Integration}, which would involve developing a theory of forms and currents on Berkovich spaces over general Banach rings (cf.~\cite[Section 0.4.3]{ChambertLoirDucros:FormesDifferentielles}).  
\end{enumerate}
\end{remark}

  \bibliography{refs}{}

\def\cprime{$'$}
\providecommand{\bysame}{\leavevmode\hbox to3em{\hrulefill}\thinspace}
\providecommand{\MR}{\relax\ifhmode\unskip\space\fi MR }
\providecommand{\MRhref}[2]{%
  \href{http://www.ams.org/mathscinet-getitem?mr=#1}{#2}
}
\providecommand{\href}[2]{#2}
\begin{thebibliography}{{Sta}15}

\bibitem[Abb24]{Abbound:UnlikelyIntersections}
Marc Abbound, \emph{Unlikely intersections problem for automorphisms of
  {M}arkov surfaces}, Preprint, arXiv:2401.05762v3 (March 19, 2024).

\bibitem[BE21]{BoucksomEriksson:SpaceNorms}
S\'ebastien Boucksom and Dennis Eriksson, \emph{Spaces of norms, determinant of
  cohomology and {F}ekete points in non-{A}rchimedean geometry}, Adv. Math.
  \textbf{378} (2021), Paper No. 107501, 124. \MR{4192993}

\bibitem[Ber90]{BerkovichSpectral}
Vladimir~G. Berkovich, \emph{Spectral theory and analytic geometry over
  non-{A}rchimedean fields}, Mathematical Surveys and Monographs, vol.~33,
  American Mathematical Society, Providence, RI, 1990. \MR{1070709}

\bibitem[Ber09]{Berkovich:WeightZero}
\bysame, \emph{A non-{A}rchimedean interpretation of the weight zero subspaces
  of limit mixed {H}odge structures}, Algebra, arithmetic, and geometry: in
  honor of {Y}u. {I}. {M}anin. {V}ol. {I}, Progr. Math., vol. 269, Birkh\"auser
  Boston, Boston, MA, 2009, pp.~49--67. \MR{2641170}

\bibitem[BFJ16]{BouksomFJ:SingularSemiPositive}
S\'{e}bastien Boucksom, Charles Favre, and Mattias Jonsson, \emph{Singular
  semipositive metrics in non-{A}rchimedean geometry}, J. Algebraic Geom.
  \textbf{25} (2016), no.~1, 77--139. \MR{3419957}

\bibitem[BJ18]{BoucksomJonsson:SingularPSH}
S\'ebastien Boucksom and Mattias Jonsson, \emph{Singular semipositive metrics
  on line bundles on varieties over trivially valued fields}, Preprint,
  arXiv:1801.08229v1 (January 24, 2018).

\bibitem[BJ22]{BoucksomJonsson:GlobalPluriTriv}
\bysame, \emph{Global pluripotential theory over a trivially valued field},
  Ann. Fac. Sci. Toulouse Math. (6) \textbf{31} (2022), no.~3, 647--836.
  \MR{4452253}

\bibitem[BJ23]{BoucksomJonsson:NAKStability2}
\bysame, \emph{A non-{A}rchimedean approach to {K}-stability, {II}:
  {D}ivisorial stability and openness}, J. Reine Angew. Math. \textbf{805}
  (2023), 1--53. \MR{4669034}

\bibitem[BJ24]{BoucksomJonsson:NAKStability1}
\bysame, \emph{A non-{A}rchimedean approach to {K}-stability, {I}: {M}etric
  geometry of spaces of test configurations and valuations}, Annales de
  l'Institut Fourier (2024).

\bibitem[BR10]{BakerRumley:PotentialTheory}
Matthew Baker and Robert Rumely, \emph{Potential theory and dynamics on the
  {B}erkovich projective line}, Mathematical Surveys and Monographs, vol. 159,
  American Mathematical Society, Providence, RI, 2010. \MR{2599526}

\bibitem[BT82]{BedforTaylor:CapacityPSH}
Eric Bedford and B.~A. Taylor, \emph{A new capacity for plurisubharmonic
  functions}, Acta Math. \textbf{149} (1982), no.~1-2, 1--40. \MR{674165}

\bibitem[CLD12]{ChambertLoirDucros:FormesDifferentielles}
Antoine Chambert-Loir and Antoine Ducros, \emph{Formes diff{\'e}rentielles
  r{\'e}elles et courants sur les espaces de {B}erkovich}, Preprint,
  arXiv:1204.6277 (April 27, 2012).

\bibitem[CLT09]{ChambertLoirThuillier:MahlerMeasure}
Antoine Chambert-Loir and Amaury Thuillier, \emph{Mesures de {M}ahler et
  \'equidistribution logarithmique}, Ann. Inst. Fourier (Grenoble) \textbf{59}
  (2009), no.~3, 977--1014. \MR{2543659}

\bibitem[Dem93]{Demailly:MAOperators}
Jean-Pierre Demailly, \emph{Monge-{A}mp\`ere operators, {L}elong numbers and
  intersection theory}, Complex analysis and geometry, Univ. Ser. Math.,
  Plenum, New York, 1993, pp.~115--193. \MR{1211880}

\bibitem[Dem94]{Demailly:Regularization}
\bysame, \emph{Regularization of closed positive currents of type {$(1,1)$} by
  the flow of a {C}hern connection}, Contributions to complex analysis and
  analytic geometry, Aspects Math., vol. E26, Friedr. Vieweg, Braunschweig,
  1994, pp.~105--126. \MR{1319346}

\bibitem[DGH21]{DGH:Uniform}
Vesselin Dimitrov, Ziyang Gao, and Philipp Habegger, \emph{Uniformity in
  {M}ordell-{L}ang for curves}, Ann. of Math. (2) \textbf{194} (2021), no.~1,
  237--298. \MR{4276287}

\bibitem[DM24a]{DemarcoMavraki:DynamicsP1}
Laura DeMarco and Niki~Myrto Mavraki, \emph{Dynamics on {$\Bbb{P}^1$}:
  preperiodic points and pairwise stability}, Compos. Math. \textbf{160}
  (2024), no.~2, 356--387. \MR{4685664}

\bibitem[DM24b]{DemarcoMavraki:GeometryPreperiodicPointsFamilies}
\bysame, \emph{The geometry of preperiodic points in families of maps on
  {$\Bbb{P}^N$}}, Preprint, arXiv:2407.10894 (July 15, 2024).

\bibitem[Fal83]{Faltings2}
Gerd Faltings, \emph{Endlichkeitss\"atze f\"ur abelsche {V}ariet\"aten \"uber
  {Z}ahlk\"orpern}, Invent. Math. \textbf{73} (1983), no.~3, 349--366.
  \MR{718935 (85g:11026a)}

\bibitem[Fuj83]{Fujita:Semipositive}
Takao Fujita, \emph{Semipositive line bundles}, J. Fac. Sci. Univ. Tokyo Sect.
  IA Math. \textbf{30} (1983), no.~2, 353--378. \MR{722501}

\bibitem[GS90]{GilletSoule:ArithmeticIntersectionTheory}
Henri Gillet and Christophe Soul\'e, \emph{Arithmetic intersection theory},
  Inst. Hautes \'Etudes Sci. Publ. Math. (1990), no.~72, 93--174. \MR{1087394}

\bibitem[GS92]{GilletSoule:ArithmeticRiemannRoch}
\bysame, \emph{An arithmetic {R}iemann-{R}och theorem}, Invent. Math.
  \textbf{110} (1992), no.~3, 473--543. \MR{1189489}

\bibitem[Gub07]{Guber:BogomolovConjecture}
Walter Gubler, \emph{The {B}ogomolov conjecture for totally degenerate abelian
  varieties}, Invent. Math. \textbf{169} (2007), no.~2, 377--400. \MR{2318560}

\bibitem[Guo25]{Guo:Integration}
Ruoyi Guo, \emph{An integration formula of {C}hern forms on quasi-projective
  varieties}, Math.Ann. \textbf{391} (2025), 3055 -- 3075.

\bibitem[GZ24]{GaoZhang:HeightsPeriodsAlgebraicCyclesFamiles}
Ziyang Gao and Shou-Wu Zhang, \emph{Heights and periods of algebraic cycles in
  families}, Preprint, arXiv:2407.01304 (October 28, 2024).

\bibitem[JX23]{JiXie:DAOCurves}
Zhuchao Ji and Junyi Xie, \emph{{DAO} for curves}, Preprint, arXiv:2302.02583v2
  (September 27, 2023).

\bibitem[LP20]{LemanissierPoineau:BerkZ}
Thibaud Lemanissier and J\'{e}r\^{o}me Poineau, \emph{Espaces de {B}erkovich
  sur {$\mathbb{Z}$} : cat{\'{e}}gorie, topologie, cohomologie}, Preprint,
  arXiv:2010.08858 (October 17, 2020).

\bibitem[Mor15]{Moriwaki:SemiampleHermitian}
Atsushi Moriwaki, \emph{Semiample invertible sheaves with semipositive
  continuous hermitian metrics}, Algebra Number Theory \textbf{9} (2015),
  no.~2, 503--509. \MR{3320851}

\bibitem[MS24]{MavrakiSchmidt:DynamicalBogomolovFamiliesSplitRational}
Niki~Myrto Mavraki and Harry Schmidt, \emph{On the dynamical {B}ogomolov
  conjecture for families of split rational maps}, Preprint, arXiv:2201.10455v4
  (to appear in \textit{Duke Mathematical Journal}) (July 1, 2024).

\bibitem[Poi24]{Poineau:Dynamics1}
J\'{e}r\^{o}me Poineau, \emph{Dynamique analytique sur {$\mathbf{Z}$}. {I} :
  Mesures d'{\'e}quilibre sur une droite projective relative}, Preprint,
  arXiv:2201.08480v4 (February 20,2024).

\bibitem[PS23]{PilleSchneider:Global}
L\'eonard Pille-Schneider, \emph{Global pluripotential theory on hybrid
  spaces}, J. \'Ec. polytech. Math. \textbf{10} (2023), 601--658. \MR{4573899}

\bibitem[Son24]{Song:EquivariantAdelic}
Yinchong Song, \emph{Norm-equivariant metrized divisors on {B}erkovich spaces},
  Preprint, arXiv:2406.19912v2 (September 26, 2024).

\bibitem[{Sta}15]{stacks-project}
The {Stacks Project Authors}, \emph{\emph{{S}tacks {P}roject}},
  http://stacks.math.columbia.edu, 2015.

\bibitem[Thu05]{thuillier2005theorie}
Amaury Thuillier, \emph{Th{\'e}orie du potentiel sur les courbes en
  g{\'e}om{\'e}trie analytique non-archim{\'e}dienne. applications {\`a} la
  th{\'e}orie d’arakelov.}, Ph.D. thesis, University of Rennes 1, 2005.

\bibitem[Ull98]{Ullmo:PositivityPoints}
Emmanuel Ullmo, \emph{Positivit\'e{} et discr\'etion des points alg\'ebriques
  des courbes}, Ann. of Math. (2) \textbf{147} (1998), no.~1, 167--179.
  \MR{1609514}

\bibitem[Voj91]{Vojta:Mordell}
Paul Vojta, \emph{Siegel's theorem in the compact case}, Ann. of Math. (2)
  \textbf{133} (1991), no.~3, 509--548. \MR{1109352}

\bibitem[Yua24]{Yuan:ArithmeticBignessUniformBogomolov}
Xinyi Yuan, \emph{Arithmetic bigness and a uniform {B}ogomolov-type result},
  Preprint, arXiv:2108.05625v4 (to appear in \textit{Annals of Mathematics})
  (April 30, 2024).

\bibitem[YZ24]{YuanZhang:AdelicLineBundles}
Xinyi Yuan and Shou-Wu Zhang, \emph{Adelic line bundles on quasi-projective
  varieties}, Preprint, arXiv:2105.13587v8 (to appear in Annals of Mathematics
  Studies) (November 25, 2024).

\bibitem[Zha95a]{Zhang:SmallPoints}
Shou-Wu Zhang, \emph{Small points and adelic metrics}, J. Algebraic Geom.
  \textbf{4} (1995), no.~2, 281--300. \MR{1311351}

\bibitem[Zha95b]{Zhang:PositiveArithmeticVarieties}
Shouwu Zhang, \emph{Positive line bundles on arithmetic varieties}, J. Amer.
  Math. Soc. \textbf{8} (1995), no.~1, 187--221. \MR{1254133}

\bibitem[Zha98]{Zhang:EquidistributionSmallPoints}
Shou-Wu Zhang, \emph{Equidistribution of small points on abelian varieties},
  Ann. of Math. (2) \textbf{147} (1998), no.~1, 159--165. \MR{1609518}

\end{thebibliography}
\bibliographystyle{amsalpha}

 \end{document}